\documentclass[reqno, 12pt]{amsart} 

\usepackage[expansion=false]{microtype} 
\usepackage{amsfonts,amsthm,amsmath,amssymb,amscd,mathrsfs}
\usepackage{mathtools} 
\allowdisplaybreaks
\usepackage{latexsym} 
\usepackage[colorlinks=true,linkcolor=blue,citecolor=red,urlcolor=black]{hyperref} 
\usepackage{graphicx}
\usepackage{enumitem} 
\usepackage{indentfirst} 
\usepackage{cite} 
\usepackage{color} 
\usepackage{lmodern} 

\theoremstyle{plain} 
\newtheorem{Thm}{Theorem}[section] 
\newtheorem{Lem}[Thm]{Lemma}     
\newtheorem{Prop}[Thm]{Proposition}

\theoremstyle{definition}

\theoremstyle{remark}
\newtheorem{Rem}[Thm]{Remark}

\numberwithin{equation}{section} 

\newcommand{\beq}{\begin{equation}}
	\newcommand{\eeq}{\end{equation}}
\newcommand{\ben}{\begin{eqnarray}}
	\newcommand{\een}{\end{eqnarray}}
\newcommand{\beno}{\begin{eqnarray*}}
	\newcommand{\eeno}{\end{eqnarray*}}

\newcommand{\R}{\mathbb R}
\newcommand{\cD}{\mathcal D}
\newcommand{\cM}{\mathcal M}
\newcommand{\cF}{\mathcal F}
\newcommand{\supp}{\operatorname{supp}}
\newcommand{\diver}{\operatorname{div}}

\newcommand{\1}{\mathbf 1}

\newcommand{\eps}{\varepsilon}

\usepackage[
    letterpaper,                 
    textheight=8.35in,           
    headsep=20pt,                
    footskip=36pt,               
    marginparwidth=0pt,          
    marginparsep=0pt,            
    left=0.75in, 
    right=0.75in
]{geometry}

\usepackage[pagewise]{lineno}

\title[Critical Morrey Rigidity for Stationary Navier--Stokes Flows]{Critical Morrey Rigidity and Removable Singularities for Five-Dimensional Stationary Navier--Stokes Flows}

\author{
Yubo~Chen$^{1}$ \and
Wendong~Wang$^{1}$ \and
Xiao~Wang$^{1}$ \and
Guoxu~Yang$^{1}$ \and
Jianbo~Yu$^{1}$
}

\thanks{$^{1}$School of Mathematical Sciences, Dalian University of Technology, 
Dalian 116024, China.}

\thanks{E-mail addresses: 
1220823215@mail.dlut.edu.cn (Yubo~Chen),
wendong@dlut.edu.cn (Wendong~Wang),
xiaowang\_dlut@163.com (Xiao~Wang),
guoxu\_dlut@outlook.com (Guoxu~Yang),
yujb@mail.dlut.edu.cn (Jianbo~Yu).
}

\subjclass[2020]{Primary 35Q30; Secondary 35B40, 35B53, 35B65, 76D05}

\keywords{stationary Navier--Stokes equations, regularity criterion, isolated singularity, Liouville theorem, Morrey bounds, head pressure}

\begin{document}
\raggedbottom

\begin{abstract}
We prove a critical Morrey rigidity theorem for the five-dimensional stationary Navier--Stokes equations. More precisely, every smooth solution on $\mathbb R^5\setminus\{0\}$ satisfying
\[
\sup_{R>0}R^{-2}\int_{B_R}|u|^3\,dx<\infty
\]
is identically zero, up to an additive constant in the pressure. This replaces the pointwise Type-I control in the known higher-dimensional rigidity theory by a velocity-only, scale-invariant averaged condition that allows spatial concentration. The proof develops a weak head-pressure mechanism that does not rely on pointwise pressure estimates or classical normal traces. We reconstruct a canonical pressure from the velocity, derive a renormalized inequality for the positive head pressure, and introduce two monotone radial fluxes. Annular energy estimates, suitable-weak compactness, and blow-up and blow-down limits are then used to identify the endpoint fluxes and force rigidity.

As an application, we obtain a removable-singularity criterion in dimension five: if a suitable weak solution is smooth away from one point and either its scale-invariant Dirichlet energy or its cubic velocity Morrey quantity remains bounded near that point, then the singularity is removable. Thus, within the isolated-singularity class, the smallness assumption in the classical stationary regularity criterion is replaced by boundedness. We also prove the corresponding velocity-only cubic Morrey rigidity theorem in dimension four by a different finite-energy argument.
\end{abstract}

\maketitle

\tableofcontents

\section{Introduction}

Consider the stationary incompressible Navier--Stokes equations
\begin{equation}
\left\{
\begin{aligned}
 -\Delta u+(u\cdot\nabla)u+\nabla p&=0,\\
 \diver u&=0
\end{aligned}
\right.
\qquad\text{in }G\subset\R^5.
\label{eq:NS}
\end{equation}
The five-dimensional stationary problem is the elliptic counterpart of the
three-dimensional nonstationary problem from the viewpoint of the energy scaling.  In
particular,
\[
 E_5(x_0,r):=r^{-1}\int_{B_r(x_0)}|\nabla u|^2\,dx
\]
is invariant under the natural transformation
\begin{equation}
 u^{(r)}(x)=r u(rx),
 \qquad p^{(r)}(x)=r^2p(rx).
 \label{eq:scaling}
\end{equation}

\subsection{Regularity criteria and bounded-scale removability}

The modern partial-regularity theory starts with Scheffer \cite{S1976} and Caffarelli--Kohn--Nirenberg
\cite{CKN1982}.  For suitable weak solutions of the three-dimensional evolutionary
system, they proved that the singular set has one-dimensional parabolic Hausdorff
measure zero in \cite{CKN1982}.  One standard form of their local
criterion says that there is a universal $\varepsilon_{\rm CKN}>0$ such that
\[
 \limsup_{r\downarrow0}r^{-1}
 \int_{Q_r(z_0)}|\nabla u|^2\,dx\,dt<\varepsilon_{\rm CKN}
\]
implies regularity at $z_0$.  New proofs, refinements, and related scale-invariant
criteria were subsequently developed by Lin \cite{Lin1998}, Ladyzhenskaya--Seregin
\cite{LadyzhenskayaSeregin1999}, Tian--Xin \cite{TianXin1999}, Seregin
\cite{Seregin2007}, Gustafson--Kang--Tsai \cite{GustafsonKangTsai2007}, Vasseur
\cite{Vasseur2007}, Kukavica \cite{Kukavica2008}, and Wang--Zhang
\cite{WangZhang2014}; see also the references therein.  This blow-up and local-energy
framework has guided the analysis of higher-dimensional stationary equations.

In dimension four, Gerhardt \cite{Gerhardt1979} obtained regularity of stationary weak
solutions.  Frehse and R\r{u}\v{z}i\v{c}ka developed existence and regularity theories
for stationary flows in dimensions five and six \cite{FrehseRuzicka1994}, while Li and
Yang later constructed regular stationary solutions on high-dimensional Euclidean
spaces \cite{LiYang2022}.

For the five-dimensional stationary problem, Struwe \cite{Struwe1988} proved partial
regularity and, in particular, regularity near $x_0$ under the smallness condition
\begin{equation} \label{eq:Struwe}
    \limsup_{r\downarrow0}E_5(x_0,r)<\varepsilon_0.
\end{equation}

Further regular-solution results on $\R^5$ were obtained in \cite{Struwe1995}, and the
boundary theory was developed by Kang \cite{Kang2004}.  Dimension six is the critical
endpoint for the standard energy method, since $H^1(\R^6)$ embeds into $L^3(\R^6)$
without compactness.  Dong--Strain \cite{DongStrain2012} established interior
partial regularity in this critical dimension, and Dong--Gu
\cite{DongGu2014} treated the boundary problem. The pressure is a principal difficulty in scale-invariant regularity criteria.  Liu--Wang \cite{LiuWang2018} proved six-dimensional boundary criteria in terms of the
velocity or Dirichlet energy and obtained a two-dimensional Hausdorff-measure estimate
for the boundary singular set.  Li--Wang \cite{LiWang2023} subsequently proved
interior and boundary one-scale criteria in dimension six; in particular, their
interior velocity criterion removes the pressure-smallness assumption by combining a
localized pressure decomposition with an inductive iteration.  In dimension five,
Cui \cite{C2023} obtained one-scale interior and boundary $\varepsilon$-regularity
criteria by a compactness/Campanato argument. These results show that pressure
smallness need not always be imposed as a separate hypothesis, but all of them remain
smallness criteria at the relevant scale.

The first aim of this paper is to remove this scale-invariant smallness requirement
within the isolated-singularity class.  Thus the solution is assumed to be smooth away
from one candidate point, but the critical quantities are only required to remain
bounded as the point is approached.  This is a genuine boundedness criterion, not an
$\varepsilon$-regularity statement; the punctured smoothness is the additional
structure that ultimately permits the use of a global tangent-flow rigidity theorem.

We use the following standard notion.  A pair $(u,p)$ is a suitable weak solution of
\eqref{eq:NS} in $G$ if
\[
 u\in H^1_{\rm loc}(G;\R^5),
 \qquad p\in L^{3/2}_{\rm loc}(G),
\]
the equations hold in $\cD'(G)$, and, for every nonnegative
$\phi\in C_c^\infty(G)$,
\begin{equation}
 \int_G|\nabla u|^2\phi\,dx
 \leq \int_G\frac{|u|^2}{2}\Delta\phi\,dx
 +\int_G\left(\frac{|u|^2}{2}+p\right)u\cdot\nabla\phi\,dx.
 \label{eq:LEI}
\end{equation}
For balls centered at the origin, set
\begin{equation}
 C_5(r):=r^{-2}\int_{B_r}|u|^3\,dx,
 \qquad
 D_5(r):=r^{-2}\int_{B_r}|p|^{3/2}\,dx,
 \qquad
 E_5(r):=r^{-1}\int_{B_r}|\nabla u|^2\,dx.
 \label{eq:CD}
\end{equation}
The pressure in a suitable pair is fixed once and for all.  Although $D_5$ changes
when a constant is added to $p$, this freedom causes no difficulty at small scales and
is explicitly recorded by one fixed-scale pressure norm in the pressure iteration.

Our first result is as follows.

\begin{Thm}
\label{thm:dirichlet-removability}
Let $(u,p)$ be a suitable weak solution of \eqref{eq:NS} in $B_1\subset\R^5$ and
assume that it is smooth in $B_1\setminus\{0\}$.  Suppose that, for some
$r_0\in(0,1)$, at least one of the following conditions holds:
\begin{align}
 &\sup_{0<r<r_0}E_5(r)<\infty,
 \label{eq:local-Dirichlet-bound}\\
 &\sup_{0<r<r_0}C_5(r)<\infty.
 \label{eq:local-velocity-Morrey-bound}
\end{align}
Then the origin is regular.  Consequently, $(u,p)$ is smooth in $B_1$.
\end{Thm}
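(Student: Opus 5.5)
We argue by contradiction and blow up at the origin, reducing the removability statement to the global critical Morrey rigidity theorem announced in the abstract: a smooth solution on $\R^5\setminus\{0\}$ with $\sup_{R>0}C_5(R)<\infty$ vanishes, up to a constant in the pressure. The first step is to reduce the Dirichlet hypothesis \eqref{eq:local-Dirichlet-bound} to the Morrey hypothesis \eqref{eq:local-velocity-Morrey-bound}. In dimension five, Sobolev--Poincar\'e on balls gives $\|u-\bar u_r\|_{L^{3}(B_r)}\lesssim r^{1/6}\|\nabla u\|_{L^2(B_r)}$, so $C_5(r)\lesssim E_5(r)^{3/2}+r^{3}|\bar u_r|^3$. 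A standard dyadic telescoping of the averages $\bar u_{2^{-k}}$, where each increment is bounded by $2^{k/2}E_5^{1/2}$, yields $|\bar u_r|\lesssim r^{-1}$. Hence $C_5$ is bounded as well, and we may assume \eqref{eq:local-velocity-Morrey-bound} throughout.

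Next we control the pressure and the energy at small scales. We use the local energy inequality \eqref{eq:LEI} together with a local pressure decomposition on $B_r$: the part $p-\mathcal R_i\mathcal R_j(u_iu_j\phi)$ is harmonic and decays like a power of $r/\rho$. Iterating this gives $\sup_{r<r_0/2}\bigl(D_5(r)+E_5(r)\bigr)\le C\bigl(\sup C_5+\|p\|_{L^{3/2}(B_{r_0})}\bigr)$. These are the standard $C$--$D$--$E$ iteration lemmas in the Struwe/Cui style, which depend on boundedness rather than smallness. We then take $r_k\downarrow0$ and rescale by \eqref{eq:scaling} to obtain $(u_k,p_k)$, which are suitable on $B_{1/r_k}$, smooth off $0$, and have $C_5,D_5,E_5$ uniformly bounded on all balls $B_R$ with $R<r_0/r_k$. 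By the suitable-weak compactness, $u_k\to v$ strongly in $L^3_{\rm loc}$ and $p_k\rightharpoonup q$. The limit is a suitable weak solution on $\R^5$ with $\sup_R C_5(R;v)<\infty$.

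The main obstacle is that $v$ need not be smooth on $\R^5\setminus\{0\}$: concentration might move singularities off the origin. To handle this, we first apply the rigidity theorem in the form the paper proves it, via head-pressure fluxes for suitable global solutions whose singular set is contained in $\{0\}$. At any point $x\neq0$, the bounded-Morrey blow-up at $0$ only has to control the limit near $x$. Here we would try a two-step argument. First, by Struwe's $\varepsilon$-criterion \eqref{eq:Struwe} and partial regularity, the limit's singular set is at most countable and scale-invariant. Second, we apply rigidity iteratively, or we use the fact that the rigidity proof (via monotone radial fluxes) only requires the Morrey bound together with suitability. Either route gives $v\equiv0$ and $q$ constant. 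Finally, strong $L^3$ convergence and the vanishing of the limit give $C_5(r_k)+D_5(r_k)\to0$ after subtracting pressure means. Once this holds at one small scale, it lies below the one-scale $\varepsilon$-regularity threshold of Cui \cite{C2023}, so the origin is regular and $(u,p)$ is smooth in $B_1$.
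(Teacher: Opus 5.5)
Your preliminary steps agree with the paper. These are: telescoping of ball averages plus Sobolev--Poincar\'e to pass from $E_5$ to $C_5$; the harmonic-pressure contraction giving $D_5$; blow-up with suitable-weak compactness; and closing via the one-scale criterion, which needs only $\int_{B_1}|u_k|^3<\eps_*$. (The increment $|\bar u_{2^{-k}}-\bar u_{2^{1-k}}|$ is of order $2^{k}E_*^{1/2}$, not $2^{k/2}E_*^{1/2}$, but your conclusion $|\bar u_r|\lesssim r^{-1}$ is the correct one.) The genuine gap is at the step you yourself flag, and neither of your routes closes it.

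Struwe's partial regularity gives only $\mathcal H^1(\mathrm{Sing})=0$, not countability. A limit along a single sequence $r_k$ has no reason to be self-similar, so its singular set need not be scale-invariant. Even granting countability, \emph{applying rigidity iteratively} at a second singular point of the tangent just reproduces the same problem for a new tangent.

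Your second route is not available either. In the paper's rigidity argument the sign $H\le0$ is obtained from the renormalized inequality (RHP) via the flux $\Xi_1$. Without $H\le0$, the identity for $\Xi_2$ has an integrand $3U_r^2-2H$ of no sign, so nothing is monotone. For a general suitable weak solution RHP is not known: $\diver(UH)$ need not be a Radon measure, so the Kato positive-part argument breaks down. The paper says explicitly (Remark~\ref{rem:weak-RHP-gap}) that suitability plus Morrey bounds are not claimed to imply RHP.

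The missing idea is that you never need regularity of the tangent off the origin; instead, RHP itself is passed to the limit on $\R^5\setminus\{0\}$.

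\begin{enumerate}
\item On each compact annulus the rescalings $(u_k,p_k)$ are smooth for large $k$, so Proposition~\ref{prop:smooth-RHP} gives RHP for them.
\item The hole-filling argument of Lemma~\ref{lem:annular-W} bounds $W_k=(H_k)_+^{3/4}$ in $H^1$ of the annulus uniformly in $k$. It uses H\"older plus the annular Gagliardo--Nirenberg inequality to absorb the drift term $R^{-1}\int|u_k|W_k^2$.
\item Lemma~\ref{lem:pressure-strong} upgrades $p_k\rightharpoonup q$ to strong $L^{3/2}_{\rm loc}(\R^5\setminus\{0\})$ convergence, using Calder\'on--Zygmund for the local part and interior harmonic estimates for the remainder. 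This is essential: $H\mapsto H_+^{3/4}$ is nonlinear, and weak pressure convergence says nothing about $(H_k)_+$.
\item Hence $W_k\to W$ strongly in $L^3_{\rm loc}$ and $\nabla W_k\rightharpoonup\nabla W$. The fluxes $\frac43W_k\nabla W_k-\frac23u_kW_k^2$ then converge in distributions, and lower semicontinuity of $\int\phi|\nabla W_k|^2$ yields RHP for the tangent.
\end{enumerate}

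Proposition~\ref{Thm:RHP-sign} and Proposition~\ref{thm:sign-rigidity} then force $v=q=0$. After that, your closing step finishes the proof, or equivalently the paper's contradiction with $C_5(r)\ge\eps_*$ at all small scales.
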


\begin{Rem}
For the three-dimensional evolutionary problem, removing the smallness assumption from the general Caffarelli--Kohn--Nirenberg scale-invariant criterion remains open. In the axisymmetric setting, however, several critical pointwise and Type-I blow-up scenarios have been ruled out without a smallness assumption; see Koch--Nadirashvili--Seregin--Šverák\cite{KNSS2009} and Chen--Strain--Yau--Tsai\cite{CSYT2008}.
The removal of the smallness condition in \eqref{eq:Struwe} for the stationary five‑dimensional case, which is analogous to the non‑stationary three‑dimensional situation, is unknown.  In the isolated-singularity class, Theorem \ref{thm:dirichlet-removability} improves
the five-dimensional stationary criterion of Struwe \cite{Struwe1988} by replacing
smallness of the critical Dirichlet quantity with boundedness. It is reasonable to assume smoothness away from the origin, since the one‑dimensional Hausdorff measure of the singular set is zero.  The proof first controls
the scale-invariant mean $r|(u)_{B_r}|$, then obtains $C_5$ and contracts the harmonic
part of the pressure to obtain $D_5$.  Only after these two upgrades can one take a
nontrivial suitable-weak tangent and invoke global rigidity.
\end{Rem}

The proof of Theorem \ref{thm:dirichlet-removability} is in fact based on the following theorem.
For some fixed $r_0\in(0,1)$, write
\begin{equation}
 \sup_{0<r<r_0}C_5(r)\leq M,
 \qquad
 \sup_{0<r<r_0}D_5(r)\leq K.
 \label{eq:localMorrey}
\end{equation}

\begin{Thm}
\label{cor:first-singularity}
Let $(u,p)$ be a suitable weak solution in $B_1\subset\R^5$ which is smooth in
$B_1\setminus\{0\}$.  If \eqref{eq:localMorrey} holds, then the origin is regular and
$(u,p)$ is smooth in $B_1$.
\end{Thm}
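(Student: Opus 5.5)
We argue by contradiction combined with a blow-up analysis at the origin, and use the global rigidity theorem announced in the abstract. That theorem says a smooth solution on $\R^5\setminus\{0\}$ with $\sup_{R>0}C_5(R)<\infty$ vanishes identically; we take it as the key input. Suppose the origin is singular. By Struwe's criterion \eqref{eq:Struwe}, or equivalently the one-scale criterion of Cui \cite{C2023} in terms of $C_5+D_5$, there is $\varepsilon_1>0$ such that
\[
 C_5(r)+D_5(r)\geq\varepsilon_1 \qquad\text{for all small } r>0 .
\]
Otherwise the smallness at some scale would give regularity at the origin.

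Take $r_k\downarrow0$ and rescale according to \eqref{eq:scaling}, setting $u_k=u^{(r_k)}$ and $p_k=p^{(r_k)}$. By \eqref{eq:localMorrey}, on every ball $B_R$ with $R<r_0/r_k$ we have
\[
 R^{-2}\int_{B_R}|u_k|^3\,dx\leq M,\qquad R^{-2}\int_{B_R}|p_k|^{3/2}\,dx\leq K .
\]
The local energy inequality \eqref{eq:LEI} with a standard cutoff gives uniform bounds on $\int_{B_R}|\nabla u_k|^2$ in terms of $M$ and $K$. The equation gives uniform bounds on $\partial_t$-free quantities; in particular it bounds $\nabla p_k$ in a negative Sobolev space, which yields compactness of $p_k$ in $L^{3/2}_{\rm loc}$ weakly. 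Since $H^1\subset L^{10/3}$ in dimension five, Rellich gives strong convergence $u_k\to\bar u$ in $L^3_{\rm loc}(\R^5)$, with $p_k\rightharpoonup\bar p$. Passing to the limit in \eqref{eq:LEI} shows that $(\bar u,\bar p)$ is a suitable weak solution on all of $\R^5$ satisfying $\sup_R C_5(R)\le M$.

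Two issues remain. The first is to show that the limit is nontrivial. Since the singular set is closed under this compactness (upper semicontinuity of the $\varepsilon$-regularity criterion), the origin is a singular point of $(\bar u,\bar p)$; in particular $\bar u\not\equiv0$. This uses the fact that strong $L^3$ convergence of $u_k$ and weak convergence of $p_k$ suffice to pass smallness of $C_5+D_5$ from the limit back to the sequence. For the pressure I would use the decomposition $p_k=$ (Riesz part of $u_k\otimes u_k$) $+$ harmonic part, so that the pressure also converges strongly in $L^{3/2}_{\rm loc}$.

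The second issue, which I expect to be the main obstacle, is that the rigidity theorem needs $\bar u$ smooth on $\R^5\setminus\{0\}$. The rescalings are smooth away from $0$, but the limit could acquire new singular points from concentration. To handle this I would choose the blow-up scales cleverly, or iterate: by partial regularity, the singular set of $\bar u$ has $\mathcal H^1$-measure zero and is scale-invariant-bounded. If $\bar u$ has a singular point $x_1\ne0$, blow up again at $x_1$ to get a new suitable solution with the same Morrey bounds. Otherwise, I would first try to show directly that $\bar u$ is regular off the origin. The argument would be that near any $x_1\ne0$ with $|x_1|\sim1$ the original solution is smooth at scale $r_k$, and $C_5$ on small balls $B_\rho(r_kx_1)$ can be made small uniformly in $k$ via the equicontinuity of the Morrey measure $|u|^3dx$ away from the origin. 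This equicontinuity should come from a dimension-reduction argument: concentration of $|u_k|^3$ onto a set of positive $\mathcal H^2$-measure is excluded by the Morrey bound being only at the $\mathcal H^2$ scale, combined with $\mathcal H^1$-smallness of singular sets. If concentration cannot be excluded this way, I would fall back on a Federer-type dimension reduction: pick a singular point of maximal "density" and blow up there until the singular set reduces to an isolated point. This yields a nontrivial suitable solution, smooth on $\R^5\setminus\{0\}$, with $\sup_R C_5(R)<\infty$, contradicting global rigidity. Hence the origin is regular and smoothness in $B_1$ follows from standard bootstrapping.
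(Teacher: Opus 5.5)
Your blow-up, compactness, and nontriviality steps are sound. The paper does the same but uses the velocity-only criterion of Proposition~\ref{thm:epsilon}, so that $\int_{B_1}|U|^3\,dx=\lim_k C_5(r_k)\geq\eps_*$ directly, with no upper-semicontinuity argument. The genuine gap is the step you flag yourself. Theorem~\ref{thm:smooth-punctured} requires the tangent $(\bar u,\bar p)$ to be smooth on $\R^5\setminus\{0\}$, and none of your remedies supplies this.

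The hypothesis \eqref{eq:localMorrey} is centered at the origin. For $x_1\neq0$ and $\rho\ll|x_1|$ it gives only $\int_{B_\rho(x_1)}|u_k|^3\,dx\leq CM|x_1|^2$, which says nothing about $\rho^{-2}\int_{B_\rho(x_1)}|u_k|^3\,dx$. A centered cubic Morrey bound allows concentration on thin annuli, so there is no ``equicontinuity of $|u|^3\,dx$ away from the origin.'' The smoothness of each rescaling away from $0$ is purely qualitative and does not survive the limit. Nothing therefore prevents $\bar u$ from having singular points off the origin.

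For the same reason, blowing up again at a singular point $x_1\neq0$ does \emph{not} give ``a new suitable solution with the same Morrey bounds,'' since bounds centered at $x_1$ are not controlled at all. The Federer-type reduction also has no footing. There is no monotonicity formula forcing tangents to be homogeneous or forcing the singular set down to an isolated point. Each new tangent would again need to be smooth away from its center, which is the original problem.

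The missing idea is that smoothness of the tangent is unnecessary, because the rigidity is proved in the weak class and the renormalized head-pressure inequality is stable under the blow-up. The paper's argument runs as follows:
\begin{enumerate}
\item Each $(u_k,p_k)$ is smooth on compact annuli, so Proposition~\ref{prop:smooth-RHP} gives \eqref{eq:smooth-RHP} there.
\item Lemma~\ref{lem:annular-W} bounds $W_k=(H_k)_+^{3/4}$ in $H^1$ on annuli, uniformly in $k$.
\item Lemma~\ref{lem:pressure-strong} upgrades $p_k\to P$ to strong $L^{3/2}_{\rm loc}(\R^5\setminus\{0\})$ convergence. Hence $H_k\to H$ in $L^{3/2}_{\rm loc}$, $W_k\to W$ in $L^3_{\rm loc}$, and $\nabla W_k\rightharpoonup\nabla W$ weakly in $L^2_{\rm loc}$.
\item The fluxes $\frac43W_k\nabla W_k-\frac23u_kW_k^2$ converge in distributions. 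Lower semicontinuity of $\frac89\int\phi|\nabla W_k|^2$ then gives (RHP) for the tangent on $\R^5\setminus\{0\}$.
\item Proposition~\ref{Thm:RHP-sign} gives $H\leq0$, and Proposition~\ref{thm:sign-rigidity} gives $U=P=0$, contradicting nontriviality.
\end{enumerate}
Without this, or some other way to bypass smoothness of the tangent, your argument does not close. Note also that Theorem~\ref{thm:smooth-punctured} is itself proved in the paper through these same weak-class propositions, so invoking it for a smooth tangent would not save any of that machinery.
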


Theorems \ref{thm:dirichlet-removability} and \ref{cor:first-singularity} are local
statements, but their decisive input is global.  If the origin were singular, a
scale-invariant blow-up would produce a nonzero global suitable-weak tangent with
bounded Morrey quantities.  The contradiction therefore reduces the removability
problem to a whole-space rigidity theorem, which leads to the second line of results.

\subsection{Rigidity of scale-invariant flows and global theorems}

A complementary development concerns rigidity of scale-invariant stationary flows.
In dimension three, the Landau solutions form an explicit nontrivial family of
$(-1)$-homogeneous solutions with one point singularity.  \v{S}ver\'ak
\cite{Sverak2011} proved that, up to the natural symmetries, they are the only smooth
$(-1)$-homogeneous solutions on $\R^3\setminus\{0\}$ and asked whether every smooth
three-dimensional solution satisfying $|U(x)|\le C|x|^{-1}$ must be a Landau solution.
Related descriptions of point singularities and far-field asymptotics can be found in
\cite{TianXin1998,MiuraTsai2012,KorolevSverak2011}.  The higher-dimensional analogue
has a different answer: Bang--Gui--Liu--Wang--Xie \cite{BGLWX2025} proved that every
smooth solution on $\R^n\setminus\{0\}$, $n\ge4$, satisfying the pointwise Type-I
estimate $|U(x)|\le C|x|^{-1}$ is trivial, without smallness of $C$ or a
self-similarity assumption.  They also applied this Liouville theorem to isolated
singularities and exterior-domain asymptotics.  Their proof exploits the total head
pressure and weighted energy multipliers.

The present paper connects this rigidity theory with the bounded-scale regularity
criteria above.  We replace pointwise Type-I control in dimension five by the integral
Morrey quantities $C_5,D_5,E_5$.  A centered cubic Morrey bound permits concentration
on thin annuli and does not imply $|U(x)|\le C|x|^{-1}$; consequently, pointwise
derivative estimates and $C^2_{\rm loc}$ compactness are no longer available.  The
proof must instead use suitable-weak compactness, canonical pressure reconstruction,
and monotone fluxes interpreted through annular averages.

The global result behind the regularity theorem requires only the velocity Morrey
bound.  For $n\geq4$ define
\[
 \cM_{3,n}(U):=\sup_{R>0}R^{3-n}\int_{B_R}|U|^3\,dx,
 \qquad \cM_3(U):=\cM_{3,5}(U),
\]
and, for a fixed pressure representative,
\begin{equation}
 \mathfrak M(U,P):=\cM_3(U)
 +\sup_{R>0}R^{-2}\int_{B_R}|P|^{3/2}\,dx.
 \label{eq:GM}
\end{equation}

Our third main result is as follows.

\begin{Thm}
\label{thm:smooth-punctured}
Let
\[
 (U,P_0)\in C^\infty(\R^5\setminus\{0\};\R^5\times\R)
\]
solve \eqref{eq:NS} in $\R^5\setminus\{0\}$ and assume only that
$\cM_3(U)<\infty$.  Then
\[
 U\equiv0,
 \qquad P_0\equiv c
\]
for some constant $c\in\R$.  More precisely, there is a unique constant $c$ such that
$P^\sharp:=P_0-c$ satisfies
\begin{equation}
 \sup_{R>0}R^{-2}\int_{B_R}|P^\sharp|^{3/2}\,dx
 \leq C\cM_3(U),
 \label{eq:intro-canonical-pressure}
\end{equation}
and for this canonical normalization $U=P^\sharp=0$.
\end{Thm}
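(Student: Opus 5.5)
The proof has three stages. First reconstruct a canonical pressure from $U$ and extend the equations across the origin. Then derive an inequality for the head pressure and two monotone radial fluxes. Finally identify the endpoint fluxes by blow-up and blow-down and force them to vanish.

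\emph{Extension and canonical pressure.} Since $\cM_3(U)<\infty$ gives $U\in L^3_{\rm loc}(\R^5)$, hence $U\otimes U\in L^{3/2}_{\rm loc}$, and a point has zero capacity in dimension five, I first check that $\diver U=0$ and the momentum equation hold in $\cD'(\R^5)$ modulo a distribution supported at $0$. Away from the origin, the interior Caccioppoli estimate on dyadic annuli $A_R=B_{2R}\setminus B_R$ gives $\int_{A_R}|\nabla U|^2\lesssim R^{-1}\int|U|^2+R^{-1}\int|U|^3+\text{pressure}$. Using the Morrey bound, this yields $R^{-1}\int_{B_R}|\nabla U|^2\le C$ once the pressure is controlled. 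I then define
\[
P^\sharp:=\mathcal R_i\mathcal R_j(U_iU_j),
\]
interpreted with a Morrey-space Calder\'on--Zygmund estimate. Since the kernel decays like $|x|^{-5}$ and $R^{-2}\int_{B_R}|U|^3$ is bounded, a dyadic decomposition of $U\otimes U$ converges. This gives $\sup_R R^{-2}\int_{B_R}|P^\sharp|^{3/2}\le C\cM_3(U)$.

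Next, $\nabla(P_0-P^\sharp)$ is harmonic on $\R^5\setminus\{0\}$, with growth controlled by the Morrey bounds at both ends. A Laurent/Liouville argument with Morrey control of $P_0$ near $0$ (obtained from the equation via local $L^{3/2}$ estimates) shows $P_0-P^\sharp=c$. The $\delta$-type singular terms are excluded because they would violate the $L^{3/2}$ Morrey scaling, $|x|^{-3}\notin L^{3/2}_{\rm loc}$ in the relevant sense. Uniqueness of $c$ holds because a nonzero constant violates \eqref{eq:intro-canonical-pressure} as $R\to\infty$. I would also check that $(U,P^\sharp)$ is a suitable weak solution on $\R^5$: the local energy inequality across $0$ follows by cutting off near $0$, since the error is $O(\epsilon^{-2}\cdot\epsilon^{2})\to$ small via the Morrey bounds and vanishing of small-ball integrals by absolute continuity.

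\emph{Head pressure and monotone fluxes.} Let $Q=\tfrac12|U|^2+P^\sharp$. In the smooth region $-\Delta Q+U\cdot\nabla Q=-|\omega|^2\le0$, so $Q$ is a subsolution. I use the renormalized version for $Q_+$, or $(Q-k)_+$, in the weak sense. For radial test functions I define two fluxes: $\Phi_1(r)$, an averaged radial flux of $\tfrac12|U|^2+P^\sharp$ against $U\cdot x/|x|$ plus the $\partial_r$ term, and $\Phi_2(r)$, the analogous flux for $Q_+$. Both are defined via annular averages $r^{-1}\int_r^{2r}$ rather than traces. The energy identity and the subsolution inequality make $\Phi_1$ and $\Phi_2$ monotone in $r$, with derivative equal to $-\int|\nabla U|^2$ on spheres, respectively a nonnegative $|\omega|^2\mathbf 1_{Q>0}$-type term. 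Scale invariance of the Morrey norms then gives bounded $r^{-2}$-normalized fluxes.

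\emph{Endpoint identification.} Take rescalings $U^{(\lambda)}$ with $\lambda\to0$ and $\lambda\to\infty$. The Morrey bounds, the Dirichlet bound and suitable-weak compactness in $L^3_{\rm loc}$ (which is compact since $H^1\subset L^{10/3}$ in dimension five) give limits $(U_0,P_0)$ and $(U_\infty,P_\infty)$. By monotonicity, the normalized fluxes converge to constants. Constant flux forces the dissipation term to vanish on the limits, so the limits have $|\nabla U|^2$ vanishing in annuli on average, i.e. they are zero. Hence both endpoint fluxes are $0$. Monotonicity then forces $\int_{\R^5}|\nabla U|^2=0$, i.e. $U\equiv$ const, and the Morrey bound forces $U=0$, hence $P^\sharp=0$.

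\textbf{Main obstacle.} The hardest step is the endpoint identification. Concentration on thin annuli prevents strong convergence of the fluxes. I would first try to use the annular-average definition together with strong $L^3$ convergence of $U$ and strong $L^{3/2}$ convergence of $P$ to pass to the limit in the flux, and handle $|\omega|^2$ only by lower semicontinuity.
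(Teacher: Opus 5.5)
\textbf{There is a genuine gap: neither of your fluxes is scale invariant in $\R^5$, so the endpoint identification and the final step both fail.}

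Your first stage is broadly the paper's Proposition~A.1, but as written it is circular, though repairable. You need annular Dirichlet bounds before any pressure is controlled; the paper gets them from a pressure-free Bogovskii Caccioppoli estimate. You also need to actually exclude a point-supported momentum defect; the paper does this with Bogovskii-corrected solenoidal tests, de Rham, and a tempered harmonic polynomial argument.

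The real gap is in your second and third stages. The energy flux $\int_{\partial B_r}(U_rQ-\partial_r\tfrac12|U|^2)\,dS$ and the $Q_+$-flux both scale like $r$. Monotonicity holds for $\Phi_i(r)$, but not for the scale-invariant quantity $r^{-1}\Phi_i(r)$, which is the only one that survives rescaling. So ``by monotonicity the normalized fluxes converge'' does not follow. Even if $r^{-1}\Phi_1\to c$ along a blow-down, the limit would only satisfy $\int_{B_r}|\nabla V|^2=-cr$, which is compatible with nonzero dissipation.

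Your last step, ``monotonicity forces $\int_{\R^5}|\nabla U|^2=0$'', presupposes finite global Dirichlet energy. Under $\cM_3(U)<\infty$ one only has $\int_{B_R}|\nabla U|^2\lesssim R$. Closing through finite energy is exactly the four-dimensional mechanism of Section~6, and it is unavailable here. Likewise, vanishing of $|\omega|^2\mathbf 1_{\{Q>0\}}$ on a tangent yields no rigidity.

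What is missing are the two dimensionless monotone quantities.

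First, the renormalization $W=H_+^{3/4}$ is chosen so that $\Xi_1(R)=\int_{\partial B_R}(\tfrac43W\nabla W-\tfrac23UW^2)\cdot e_r\,dS$ is scale invariant in $\R^5$ and nondecreasing, with defect $\tfrac89|\nabla W|^2$. An annular hole-filling estimate bounds its dyadic averages, absorbing the critical drift term $R^{-1}\int|U|W^2$ by Gagliardo--Nirenberg. This gives, in turn, $\nabla W\in L^2(\R^5)$, then $W\in L^{10/3}$, then vanishing of both endpoint limits, hence $H\le0$.

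Second, only once $H\le0$ is known, testing the momentum equation with $\nabla\psi$, $\psi\sim|x|^{-1}$, produces $\Xi_2(R)=R^{-2}\int_{\partial B_R}(U_r^2+P)\,dS$. Its defect $(3U_r^2-2H)|x|^{-3}$ is then nonnegative and globally integrable. It is this integrability, not ``constant flux'', that makes the defect vanish on blow-up and blow-down tangents.

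You then still need the zero-defect classification. From $V_r=0$ and $H_V=0$, the local energy inequality and the pressure equation make $V$ curl free. Hence $V=\nabla\Phi$ with $\Phi$ harmonic and homogeneous of degree zero, so $V=0$. Weak pressure convergence then gives $\ell_0=\ell_\infty=0$, hence $\Xi_2\equiv0$ and $U=P=0$.
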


\begin{Rem}
    The theorem of \cite{BGLWX2025} assumes the pointwise Type-I condition
$|U(x)|\leq C|x|^{-1}$ in every dimension $n\geq4$.  In dimension five, Theorem
\ref{thm:smooth-punctured} replaces this by the centered integral condition
$\cM_3(U)<\infty$.  The latter permits concentration on thin annuli and supplies neither pointwise derivative estimates nor uniform traces on every sphere. Their approach gives pointwise estimates of the head pressure $H:=\frac{1}{2} |u|^2 + p$: 
\begin{align*}
    |\nabla^k H(x)| \leq \frac{C_k}{|x|^{k+2}}, \,\, \text{in}\,\,\R^n\setminus\{0\},\,\, n\geq 4,\,\, k = 0,1,2\cdots
\end{align*}
while such bounds are difficult to obtain with  the Morrey estimates stated above.  This is
precisely where the pointwise argument ceases to apply: in dimension five its
positive-head-pressure flux contains
\[
 \int_{\partial B_R}\partial_rH_+\,H_+^{1/2}\,dS,
\]
whereas cubic Morrey control gives only bulk $L^{3/2}$ control of $H$ and no trace of
$\partial_rH$ controlled by the Morrey norm; after taking a suitable-weak limit,
there need not even be a classical normal trace.  One of our new observations is that the boundary estimate can be replaced by an annular Caccioppoli argument combined with a distributional flux. This approach further replaces pointwise endpoint estimates by blow‑up and blow‑down arguments. Together with the
canonical pressure reconstruction and suitable-weak compactness, this gives both a
weaker hypothesis in dimension five and a different endpoint mechanism (see Subsection 1.3 for more details).
\end{Rem}

\begin{Rem}
The normalization in Theorem~1.4 cannot be omitted. Indeed, \(U=0\) and \(P_0=c\neq0\) solve the equations and have zero velocity Morrey norm, whereas
\[
R^{-2}\int_{B_R}|c|^{3/2}\,dx\simeq |c|^{3/2}R^3\to\infty.
\]
Proposition~A.1 resolves this issue by reconstructing a canonical pressure \(P^\sharp\) directly from \(U\otimes U\). The local part is controlled by the \(L^{3/2}\)-boundedness of the Riesz transforms, while the far-field part is summable over dyadic annuli using the velocity Morrey bound. This yields
\[
\sup_{R>0}R^{-2}\int_{B_R}|P^\sharp|^{3/2}\,dx\le CM_3(U),
\]
and \(P_0-P^\sharp\) is necessarily constant; moreover, \(P^\sharp\) is the unique representative satisfying this Morrey bound. This differs from \cite{BGLWX2025}, where the pointwise Type-I assumption yields pointwise pressure estimates, in particular \(|P(x)|\lesssim |x|^{-2}\) after fixing the additive constant. Such a pointwise normalization is unavailable under the present cubic Morrey assumption, so the velocity-based pressure reconstruction is essential here.
\end{Rem}

The following one-scale criterion plays an important role in the proof, which is
\cite[Theorem~1.1]{C2023} with exponent $q=3$ and zero force; see also \cite[Theorem~1.2]{LiWang2023} for six dimension.

\begin{Prop}[One-scale $\eps$-regularity]
\label{thm:epsilon}
There exists a universal $\eps_*>0$ such that, if $(v,q)$ is a suitable weak solution
in $B_1\subset\R^5$ and
\[
 \int_{B_1}|v|^3\,dx<\eps_*,
\]
then $v$ is regular in $B_{1/2}$.
\end{Prop}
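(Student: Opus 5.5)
The plan is to derive Proposition~\ref{thm:epsilon} in two steps. First, convert the single-scale velocity smallness into smallness of \emph{all} the scale-invariant quantities (velocity, pressure with a constant removed, Dirichlet energy) at a fixed smaller scale around every point of $B_{1/2}$. Second, run a standard Campanato-type decay iteration from that scale down to zero, which yields regularity. The hypothesis controls only $\int_{B_1}|v|^3$ and puts no bound on $q$, so the pressure has to be generated from the velocity. Throughout, $\eps_*$ is chosen at the end, depending only on universal constants.

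\textbf{Step 1 (pressure splitting and energy bound).} Fix a cutoff $\chi$ equal to $1$ on $B_{7/8}$ and supported in $B_1$. Write $q=q_1+q_2$ on $B_{7/8}$ with
\[
q_1=R_iR_j(\chi v_iv_j),
\]
so that $\|q_1\|_{L^{3/2}(B_1)}\le C\|v\|_{L^3(B_1)}^2\le C\eps_*^{2/3}$ by Calder\'on--Zygmund. The remainder $q_2$ is harmonic in $B_{7/8}$, and it is not controlled directly.

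To handle $q_2$, I would use the equation $\nabla q_2=\Delta v-\diver(v\otimes v)-\nabla q_1$, which bounds $q_2-(q_2)_{B_\rho}$ in $L^{3/2}(B_\rho)$ by $C(\|\nabla v\|_{L^2(B_\rho)}+\eps_*^{2/3})$ (Ne\v{c}as/negative-norm estimate, after subtracting the mean). Next, insert this into the local energy inequality \eqref{eq:LEI}. Because $\diver v=0$, a constant may be subtracted from $q$ in the flux term $\int q\,v\cdot\nabla\phi$. With $\phi$ a cutoff between radii $\rho<\sigma$, this gives
\[
\int_{B_\rho}|\nabla v|^2\le \frac{C}{(\sigma-\rho)^2}\eps_*^{2/3}+\frac{C}{\sigma-\rho}\Bigl(\eps_*+\eps_*^{1/3}\|\nabla v\|_{L^2(B_\sigma)}\Bigr).
\]
By Young's inequality and the standard iteration lemma on nested radii, one obtains $\int_{B_{3/4}}|\nabla v|^2\le C\eps_*^{2/3}$. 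Consequently the mean-free pressure satisfies $\int_{B_{3/4}}|q-c|^{3/2}\le C\eps_*$. I expect this step to be the main obstacle. One must check that the term $\int (q_2-c)\,v\cdot\nabla\phi$ really closes with only a small factor in front of $\|\nabla v\|$, and that the negative-norm control of the harmonic part is compatible with the varying radii. If this direct absorption fails, the fallback is a contradiction/compactness argument: take a sequence with $\|v_k\|_{L^3}\to0$, normalize the pressures by their means, and pass to a limit in the local energy inequality.

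\textbf{Step 2 (decay iteration).} For $x_0\in B_{1/2}$ and $r\le1/4$, set
\[
\Phi(x_0,r)=r^{-2}\int_{B_r(x_0)}|v|^3+r^{-2}\int_{B_r(x_0)}|q-(q)_{B_r(x_0)}|^{3/2}.
\]
Step 1 gives $\Phi(x_0,1/4)\le C\eps_*^{2/3}$. I then prove the one-step decay $\Phi(x_0,\theta r)\le \tfrac12\Phi(x_0,r)$ for a fixed small $\theta$, provided $\Phi(x_0,r)$ is small.

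The pressure part decays as follows. Split again into a Riesz part, bounded by $C\theta^{-2}r^{-2}\|v\|_{L^3(B_r)}^3$, and a harmonic part, whose oscillation gains a factor $\theta^{3}$ from the mean value/gradient estimate. For the velocity part, I would combine the local energy inequality on $B_{r/2}(x_0)$ with the five-dimensional interpolation $\|v\|_{L^3}\lesssim\|v\|_{L^2}^{4/9}\|v\|_{H^1}^{5/9}$. This yields $C_5$ at scale $\theta r$ bounded by $C\theta^{\alpha}\Phi(x_0,r)+C_\theta\Phi(x_0,r)^{3/2}$, which is enough once $\Phi$ is small.

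Iterating the decay gives $\Phi(x_0,r)\le Cr^{\beta}$ uniformly for $x_0\in B_{1/2}$. The resulting Morrey decay of $\nabla v$ implies H\"older continuity of $v$ by Campanato's characterization. Bootstrapping through the Stokes system then gives smoothness, so $v$ is regular in $B_{1/2}$.
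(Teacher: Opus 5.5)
\textbf{Verdict.} Step~1 of your proposal works, but Step~2 has a genuine gap: the decay inequality it asserts does not follow from the tools you cite. The endgame (Campanato) also needs more than you state.

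\textbf{What the paper does.} The paper does not prove this proposition. It imports it from Cui \cite{C2023} (Theorem~1.1 with $q=3$ and zero force), where it is proved by a compactness/Campanato argument.

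\textbf{Step~1 is fine.} It is not the obstacle you anticipate. Ne\v{c}as' inequality gives $\|q-(q)_{B_\sigma}\|_{L^{3/2}(B_\sigma)}\le C(\|\nabla v\|_{L^2(B_\sigma)}+\|v\|_{L^3(B_1)}^2)$, uniformly for $\sigma\in[3/4,7/8]$, so no Riesz splitting is even needed. Subtracting that constant in the flux term is legitimate because $\diver v=0$. The nested-radii iteration then closes to $\int_{B_{3/4}}|\nabla v|^2\le C\eps_*^{2/3}$. This yields $\int_{B_{3/4}}|q-c|^{3/2}\le C\eps_*^{1/2}$ rather than $C\eps_*$, which is harmless.

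\textbf{The gap in Step~2.} This step carries the whole content of $\eps$-regularity. The bound $C_5(\theta r)\le C\theta^{\alpha}\Phi(r)+C_\theta\Phi(r)^{3/2}$ cannot come from the local energy inequality plus interpolation.
\begin{itemize}
\item The term $\int|v|^2\Delta\phi$ in \eqref{eq:LEI} is quadratic. The best you get (centered at $x_0$) is $E_5(r/2)\lesssim C_5(r)^{2/3}+\cdots$, i.e. $E_5(r/2)^{3/2}\lesssim\Phi(r)$. This is homogeneous of degree one, with no small factor.
\item Splitting off the mean and using Sobolev--Poincar\'e then gives only $C_5(\theta r)\lesssim\theta^3C_5(r)+\theta^{-2}E_5(r/2)^{3/2}\lesssim(\theta^3+\theta^{-2})\Phi(r)+\cdots$. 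The interpolation route gives the same linear bound. Incidentally, the correct five-dimensional exponents are $\|v\|_{L^3}\lesssim\|v\|_{L^2}^{1/6}\|v\|_{L^{10/3}}^{5/6}$, not $4/9$ and $5/9$.
\item Your pressure step has the same defect. The Riesz part contributes $C\theta^{-2}C_5(r)$, which is linear with a large constant. So $\Phi(\theta r)\le\frac12\Phi(r)$ does not follow from the bounds you list.
\end{itemize}

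\textbf{What is missing.} The superlinear gain has to come from comparison with the linear Stokes problem, for example by contradiction:
\begin{itemize}
\item Suppose $\Phi(v_k,1)=\varepsilon_k^3\to0$ while the decay fails, and set $w_k=v_k/\varepsilon_k$.
\item The local energy inequality bounds $w_k$ in $H^1_{\rm loc}$, and the nonlinearity carries a factor $\varepsilon_k$.
\item The strong $L^3_{\rm loc}$ limit solves the linear Stokes system. Here even $-\Delta w=0$, because the pressure normalized by $\varepsilon_k$ vanishes. For this limit, $C_5(\theta)\lesssim\theta^3C_5(1)$.
\item The normalized pressure $(q_k-(q_k)_{B_1})/\varepsilon_k^2$ is handled by strong convergence of its Riesz part plus interior decay of its harmonic part.
\end{itemize}
This is exactly what Cui's theorem supplies.

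\textbf{The endgame.} Even a contraction $\Phi(\theta r)\le\frac12\Phi(r)$ gives only $\Phi(r)\lesssim r^\beta$ with small $\beta$. Via the local energy inequality this yields $\int_{B_r}|\nabla v|^2\lesssim r^{1+2\beta/3}$. Morrey's Dirichlet-growth lemma in $\R^5$ needs $r^{3+2\alpha}$, which would require $\beta>3$. That is impossible in general, since $C_5(x_0,r)\sim|v(x_0)|^3r^3$. So ``Campanato's characterization'' does not apply as stated. You must either iterate on the $L^\infty$-scaled oscillation $|B_r|^{-1}\int_{B_r}|v-(v)_{B_r}|^3$, or bootstrap the subcritical Morrey bound $\int_{B_r}|v|^3\lesssim r^{2+\beta}$ through the Stokes system.
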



\begin{Rem}
For the three-dimensional nonstationary Navier--Stokes equations, Wolf \cite{W2015} used a local pressure decomposition to show that there exists an absolute constant \(\varepsilon_0>0\) such that
\[
r_0^{-2}\int_{Q_{r_0}}|u|^3\,dx\,dt\leq \varepsilon_0
\]
for some \(r_0>0\) implies regularity of \(u\) in \(Q_{r_0/2}\), without imposing a separate smallness condition on the pressure. See also Chae--Wolf \cite{CW2017} for related developments. Li--Wang \cite{LiWang2023} employed the pressure decomposition introduced by Wolf to establish regularity criteria for the six-dimensional stationary Navier--Stokes equations.
\end{Rem}

For the global suitable-weak argument, let
\[
 H:=\frac{|U|^2}{2}+P,
 \qquad W:=H_+^{3/4}.
\]
We say that $(U,P)$ has the \textbf{renormalized head-pressure property (RHP)} in
$G$ if $W\in H^1_{\rm loc}(G)$ and
\begin{equation}
 \diver\left(\frac43W\nabla W-\frac23UW^2\right)
 \geq\frac89|\nabla W|^2
 \quad\text{in }\cD'(G).
 \label{eq:RHP}
\end{equation}
The next propositions provide the key link between the RHP structure and the sign condition required for the rigidity argument.

\begin{Prop}
\label{Thm:RHP-sign}
Let $(U,P)$ be a global suitable weak solution satisfying
$\mathfrak M(U,P)<\infty$.  If $(U,P)$ has property $\mathrm{(RHP)}$ in
$\R^5\setminus\{0\}$, then $H\leq0$ almost everywhere.
\end{Prop}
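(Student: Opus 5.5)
The plan is to exploit \eqref{eq:RHP} as a monotonicity formula in the radial variable, with all boundary terms read as annular averages rather than traces. Fix a radial cutoff $\eta(x)=\psi(|x|)$ with $\psi$ supported in $(r,R)$ and equal to $1$ on $(2r,R/2)$. Test \eqref{eq:RHP} against $\eta\ge0$, which is admissible since $\supp\eta\subset\R^5\setminus\{0\}$. This gives
\[
\frac89\int|\nabla W|^2\eta\,dx\le \mathcal F(R)-\mathcal F(r),
\qquad
\mathcal F(s):=-\int_{s<|x|<2s}\Bigl(\frac43W\nabla W-\frac23UW^2\Bigr)\cdot\nabla\psi(|x|)\,dx .
\]
Here $\mathcal F(s)$ is an averaged flux through the annulus $A_s=\{s<|x|<2s\}$, with $|\nabla\psi|\sim s^{-1}$. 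Every term is invariant under the scaling \eqref{eq:scaling}, because $W$ scales like $|x|^{-3/2}$ and $|\nabla W|^2$ like $|x|^{-5}$. Choosing $\psi$ as an average over the inner cutoff radius makes $s\mapsto\mathcal F(s)$ (essentially) nondecreasing. Its increments then dominate $\frac89\int_{2r<|x|<R/2}|\nabla W|^2$.

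First, I will show that $\mathcal F$ is bounded uniformly in $s$ by a constant depending only on $\mathfrak M(U,P)$. The Morrey bounds give $\int_{B_R}W^2\le\int_{B_R}H_+^{3/2}\lesssim \mathfrak M\,R^2$. This immediately controls the term $s^{-1}\int_{A_s}W|\nabla W|$, provided we have an annular Caccioppoli bound $\int_{A_s}|\nabla W|^2\lesssim s^{-2}\int_{\tilde A_s}W^2+\dots$. To obtain that bound, I will test \eqref{eq:RHP} with $\zeta^2$ for a cutoff $\zeta$ adapted to a slightly larger annulus $\tilde A_s$, and integrate by parts once more using $\frac43 W\nabla W=\frac23\nabla(W^2)$.

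The drift term $s^{-1}\int_{A_s}|U|W^2=s^{-1}\int_{A_s}|U|H_+^{3/2}$ is the main obstacle, because H\"older with only $U\in L^3$ and $H_+\in L^{3/2}$ does not close. I will first try an improved integrability step: use the RHP inequality itself, or the local energy inequality \eqref{eq:LEI}, to show that $W$ is a subsolution of a drift equation with a drift that is critical in the Morrey sense. A De Giorgi or Moser truncation on $\min(W,k)$ should then give a scale-invariant reverse H\"older bound for $W$ on annuli, namely $W\in L^{2\cdot\frac32}$ there. Combined with $U\in L^3(A_s)$, this makes $\int_{A_s}|U|W^2$ finite with the correct scaling. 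If this fails, the fallback is to absorb the drift term into the $|\nabla W|^2$ term via Sobolev on annuli, using $\|U\|_{L^{5}}$-type smallness of averages that come from the Morrey bound after choosing good radii.

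Granting boundedness, monotonicity yields $\int_{\R^5\setminus\{0\}}|\nabla W|^2<\infty$, so $\int_{A_s}|\nabla W|^2\to0$ as $s\to0$ and as $s\to\infty$. Next I will take blow-up and blow-down limits $U^{(s)},P^{(s)}$ as $s\to0,\infty$, using suitable-weak compactness under the uniform bound on $\mathfrak M$. The rescaled $W^{(s)}$ then converge to limits with $\nabla W^\infty=0$ on $1<|x|<2$, hence $W^\infty$ is constant there. The scaling of $W$, together with the tangent-flow structure, should force that constant to be homogeneous of degree $-3/2$, hence zero. This identifies both endpoint fluxes $\mathcal F(0^+)$ and $\mathcal F(\infty)$ as $0$. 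Passing the flux to the limit needs strong $L^{3/2}$ convergence of $H$ and $L^3$ convergence of $U$ on annuli, which suitable-weak compactness provides.

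Finally, $\frac89\int|\nabla W|^2\le\mathcal F(\infty)-\mathcal F(0^+)=0$, so $W$ is constant on $\R^5\setminus\{0\}$. The bound $\int_{B_R}W^2\lesssim R^2=o(R^5)$ then forces that constant to be $0$, i.e.\ $H_+=0$ and $H\le0$ almost everywhere.
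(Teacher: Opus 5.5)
Your architecture (averaged monotone flux, finite global Dirichlet energy of $W$, vanishing endpoint fluxes, $W$ constant hence zero) is the paper's. The gap is in the step everything rests on: the scale-invariant annular bound $\int_{A_s}|\nabla W|^2\le C(\mathfrak M)$, equivalently boundedness of your averaged flux $\mathcal F$. Without it the endpoint limits are not known to be finite, so the monotonicity argument cannot start. You correctly isolate the drift term $s^{-1}\int_{A_s}|U|W^2$, but neither remedy closes it.

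The De Giorgi/Moser route is only announced. A drift that is merely critical in the Morrey sense typically needs a smallness condition to give a scale-invariant reverse H\"older bound, and you have none. The fallback appeals to ``$L^5$-type smallness'' of $U$ on good radii, which the hypothesis does not supply. A cubic Morrey bound gives neither $L^5$ integrability nor smallness of $R^{-2}\int_{A_R}|U|^3$ on any annulus. The naive Sobolev absorption $R^{-1}\|U\|_{L^{5/2}(A_R)}\|W\|_{L^{10/3}(A_R)}^2\lesssim M^{1/3}\bigl(\|\nabla W\|_{L^2}^2+R^{-2}\|W\|_{L^2}^2\bigr)$ is critical, so nothing can be absorbed when $M$ is large.

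The missing idea is that no smallness is needed if you pair $U\in L^3$ with $W\in L^3$ rather than $L^{10/3}$. Write $\int_{\mathcal A}|U|W^2\le\|U\|_{L^3}\|W\|_{L^3}^2$ and use the annular Gagliardo--Nirenberg inequality
$\|W\|_{L^3}^2\le C\|W\|_{L^2}^{1/3}\bigl(\|\nabla W\|_{L^2}+R^{-1}\|W\|_{L^2}\bigr)^{5/3}$.
The gradient enters with power $5/3<2$. Young's inequality with exponents $6/5$ and $6$ then gives, for every $\theta>0$,
$\frac{C}{R(t-s)}\int_{\mathcal A(t)}|U|W^2\le\theta\Phi(t)+C_{\theta,M}(t-s)^{-6}R^{-2}\int_{A_R^*}W^2$.
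The last integral is $O(R^2)$ by the Morrey bound on $H$. Hole-filling over nested annuli with $\theta<2^{-6}$ then yields the uniform annular bound; this uses $W\in H^1_{\rm loc}$ from RHP to make the outermost term finite.

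Your endpoint step is repairable but misjustified: there is no reason a limit should be homogeneous of degree $-3/2$. Instead, the rescaled $W$ have vanishing Dirichlet energy on every fixed annulus $\{1/\rho<|x|<\rho\}$. So any limit is a single constant on $\R^5\setminus\{0\}$, and the inherited bound $\int_{B_R}W^2\lesssim R^2$ kills it. Rellich on annuli then makes the rescaled fluxes vanish; strong pressure convergence is not needed, and suitable-weak compactness alone would not supply it.

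The paper avoids blow-ups here entirely. It extends $W$ across the origin by capacity and gets $W\in L^{10/3}(\R^5)$ from the homogeneous Sobolev inequality and the decay of averages. It then bounds the dyadic flux average by $\|W\|_{L^{10/3}(A_R)}\|\nabla W\|_{L^2(A_R)}+M^{1/3}\|W\|_{L^{10/3}(A_R)}^2$, which tends to zero at both ends.
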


\begin{Prop}
\label{thm:sign-rigidity}
Let $(U,P)$ be a global suitable weak solution in $\R^5$ satisfying
$\mathfrak M(U,P)<\infty$.  If
\begin{equation}
 H=\frac{|U|^2}{2}+P\leq0
 \quad\text{a.e. in }\R^5,
 \label{eq:sign-assumption}
\end{equation}
then $U=P=0$ in $\R^5$.
\end{Prop}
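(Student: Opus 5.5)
The plan is to test the local energy inequality \eqref{eq:LEI}, with $U,P$ in place of $u,p$, against radial cutoffs and use the sign of $H$ to discard the head-pressure transport term. Fix $\phi_R(x)=\phi(x/R)$ with $\phi\in C_c^\infty(B_2)$, $0\le\phi\le1$, $\phi=1$ on $B_1$, radially nonincreasing. Then \eqref{eq:LEI} reads
\[
\int|\nabla U|^2\phi_R\,dx\le\int\frac{|U|^2}{2}\Delta\phi_R\,dx+\int H\,U\cdot\nabla\phi_R\,dx .
\]
The first term on the right is bounded by $R^{-2}\int_{B_{2R}}|U|^2\le CR^{-2}\big(\int_{B_{2R}}|U|^3\big)^{2/3}R^{5/3}\le C\cM_3(U)^{2/3}R$. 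This only gives $E_5(R)\lesssim 1$, i.e.\ boundedness of the scale-invariant Dirichlet energy, which is not decay. The sign of $H$ must therefore be used more cleverly.

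\textbf{Improving the decay.} I would use the pressure equation. Since $\diver U=0$, one has $-\Delta P=\partial_i\partial_j(U_iU_j)$. Taking the trace of the Navier--Stokes system in divergence form gives
\[
\Delta H=|\nabla U|^2+\tfrac12\Delta|U|^2-\dots
\]
More precisely, the classical identity is $\Delta H-U\cdot\nabla H=|\omega|^2$ in the smooth case; weakly, for suitable solutions, I would aim for $\Delta H-\diver(UH)\ge |\nabla U|^2$-type control in $\cD'$, which is essentially \eqref{eq:LEI} rewritten. The key step is to exploit $H\le0$, i.e.\ $P\le -|U|^2/2\le0$, together with the canonical normalization (the pressure Morrey bound in $\mathfrak M$), which forces $P\to0$ in average at infinity. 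Test the equation $\Delta P=-\partial_i\partial_j(U_iU_j)$ against $\phi_R$:
\[
\int P\,\Delta\phi_R=-\int U_iU_j\partial_i\partial_j\phi_R .
\]
Since $-P\ge |U|^2/2\ge0$, I would choose the cutoff so that the averages are comparable. This yields $\int_{B_R}|U|^2\lesssim \int_{A_R}|P|$, while $-P$ is superharmonic-like; note $-\Delta P=\partial_i\partial_j(U_iU_j)$ has no sign, however. Instead I would use the trace identity: $\Delta P=-\partial_iU_j\partial_jU_i$ and $\int_{\R^5}\partial_iU_j\partial_jU_i\,\phi$ equals a boundary term, so globally $\int \Delta P\,\phi_R\to0$.

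\textbf{Cleanest route, which I would try first.} Integrate $H\le0$ against $\phi_R$ with a weight and use the mean of $P$. Set $m(R)=R^{-5}\int_{B_R}(-P)$. Then $m(R)\ge R^{-5}\int_{B_R}|U|^2/2\ge0$. From the weak equation $\Delta P=-\partial_i\partial_j(U_iU_j)$, spherical-mean calculus gives $\frac{d}{dR}$ of the ball average of $P$ expressed via $R^{-4}\int_{B_R}\frac{x_ix_j}{|x|^2}U_iU_j$-type terms plus $|U|^2$ terms. Combining with $-P\ge|U|^2/2$ should give a differential inequality of the form $\frac{d}{dR}\bar P(R)\ge c R^{-1}\,\overline{|U|^2}(R)\ge0$ (up to constants to be checked), with $\bar P\le0$. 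Together with the canonical normalization, which gives $\bar P(R)\to0$ as $R\to\infty$ and $|\bar P(R)|\lesssim R^{-2}$ from the Morrey bound, monotonicity forces an integrable quantity $\int_0^\infty R^{-1}\overline{|U|^2}\,dR<\infty$. The identity, evaluated at the endpoints $R\to0$ and $R\to\infty$ via scaling, should then make the total vanish, giving $U=0$. Then $\Delta P=0$ with the Morrey bound forces $P=0$ by Liouville, since $\bar P$ is constant and tends to $0$.

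\textbf{Main obstacle.} The main difficulty is deriving the precise sign in the averaged identity for $\bar P$ (the anisotropic term $x_ix_jU_iU_j/|x|^2$ versus $|U|^2$). Dimension five should make the coefficient come out right, analogous to the head-pressure flux computations of \cite{BGLWX2025}, but here in integrated, weak form using only \eqref{eq:LEI} and the distributional pressure equation.
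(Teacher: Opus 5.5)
There is a genuine gap. Your argument rests on a monotonicity formula that you neither derive nor state correctly. It also rests on an endpoint step ("evaluated at the endpoints via scaling, should then make the total vanish") that is in fact the main content of the proof.

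\textbf{The identity.} Your first route cannot work either: $H\le0$ does not let you discard $\int H\,U\cdot\nabla\phi_R$, because $U\cdot\nabla\phi_R$ has no sign. For the second route, testing $-\Delta P=\partial_i\partial_j(U_iU_j)$ with a radial $f(|x|)$ gives
\[
\int_{\R^5}\Bigl[(U_r^2+P)f''+(|U|^2-U_r^2+4P)\frac{f'}{r}\Bigr]\,dx=0 .
\]
So every jump of $f'$ produces a surface term in $U_r^2+P$, never in $P$ alone. Taking $f'=-r$ on $(0,R)$ yields the virial identity $R\int_{\partial B_R}(U_r^2+P)\,dS=\int_{B_R}(|U|^2+5P)\,dx$, and hence
\[
\frac{d}{dR}\Bigl(\frac{1}{|B_R|}\int_{B_R}P\,dx\Bigr)=\frac{1}{|B_1|R^{6}}\Bigl(\int_{B_R}|U|^2\,dx-R\int_{\partial B_R}U_r^2\,dS\Bigr).
\]
This has no sign, whatever the sign of $H$, so no inequality $\frac{d}{dR}\bar P\ge cR^{-1}\overline{|U|^2}$ is available.

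The weight that works is $f=1/r$ on an annulus. It cancels the bulk $|U|^2$ term after writing $P=H-|U|^2/2$, and produces the scale-invariant flux $\Xi_2(R)=R^{-2}\int_{\partial B_R}(U_r^2+P)\,dS$ with nonnegative defect $(3U_r^2-2H)|x|^{-3}$, as in \eqref{eq:g3identity}. In the ball-averaged form you were aiming for, the correct statement is that $R^{-3}\int_{B_R}P\,dx$ is \emph{nonincreasing}, with derivative $R^{-4}\bigl(2\int_{B_R}H\,dx-R\int_{\partial B_R}U_r^2\,dS\bigr)$. Either way, the defect controls only $U_r$ and $H$, not $|U|^2$.

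Moreover, your $\bar P$ is not scale invariant. The Morrey bound gives only $|\bar P(R)|\lesssim R^{-2}$, which allows $\bar P(0^+)=-\infty$. So even your claimed inequality would not give $\int_0^\infty R^{-1}\overline{|U|^2}\,dR<\infty$.

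\textbf{The endpoints.} For the correct scale-invariant flux, monotonicity and the dyadic bound \eqref{eq:g3average} give finite endpoint limits and a finite total defect. Nothing forces those limits to be zero, and this is the missing idea. The paper proceeds as follows.
\begin{enumerate}
\item Rescale at $0$ and at $\infty$ and extract suitable weak tangents via Lemma \ref{lem:compactness}.
\item Use integrability of the defect to see that the defect vanishes on every fixed annulus, so the tangent has $H_V=0$ and $V_r=0$.
\item Since $Q=-|V|^2/2$, the pressure equation gives $\Delta\frac{|V|^2}{2}=\partial_iV_j\partial_jV_i$. Combined with the local energy inequality this yields $\int(|\nabla V|^2-\partial_iV_j\partial_jV_i)\phi\le0$, so $V$ is curl-free.
\item Hence $V=\nabla\Phi$ with $\Phi$ harmonic and $x\cdot\nabla\Phi=0$, which forces $V=Q=0$ (Proposition \ref{prop:zero-defect}).
\item Weak pressure convergence then makes both endpoint limits of $\Xi_2$ vanish.
\end{enumerate}
With $\Xi_2$, zero defect gives only $U_r=0$ and $H=0$, so the same curl-free argument must be applied once more to $(U,P)$ itself to conclude $U=P=0$.
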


The last main theorem is a four-dimensional analogue proved by a different method.

\begin{Thm}
\label{thm:cubic-45}
Let
\[
 (U,P_0)\in C^\infty(\R^4\setminus\{0\};\R^4\times\R)
\]
solve the stationary Navier--Stokes equations in $\R^4\setminus\{0\}$.  Assume, for
the velocity alone, that
\begin{equation}
 \cM_{3,4}(U)=\sup_{R>0}R^{-1}\int_{B_R}|U|^3\,dx<\infty.
 \label{eq:Mncubic}
\end{equation}
Then $U=0$ and $P_0=c$ for a constant $c$.  The canonical normalization
$P^\sharp=P_0-c$ supplied by Proposition \ref{prop:auto-pressure} satisfies
\[
 \sup_{R>0}R^{-1}\int_{B_R}|P^\sharp|^{3/2}\,dx<\infty
\]
and is identically zero.
\end{Thm}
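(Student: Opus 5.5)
The plan is to upgrade the cubic Morrey bound, which is scale-invariant in $\R^4$, to a \emph{global finite Dirichlet energy}. Once $\nabla U\in L^2(\R^4)$, a Sobolev/Calderón–Zygmund decay argument makes all boundary terms in the energy identity vanish. Throughout, write $M:=\cM_{3,4}(U)$. The point is that in dimension four $\int_{B_R}|\nabla u|^2\,dx$ is itself invariant under $u\mapsto ru(rx)$, so the local energy inequality should give an $R$-independent bound.

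\emph{Step 1 (pressure and the origin).} Using Proposition \ref{prop:auto-pressure} in dimension four, I replace $P_0$ by the canonical $P^\sharp=P_0-c$ reconstructed from $U\otimes U$, so that $\sup_R R^{-1}\int_{B_R}|P^\sharp|^{3/2}\lesssim M$. Next I check that $(U,P^\sharp)$ solves the equations, and the local energy equality, in all of $\R^4$. To do this, test with $\phi(1-\eta_\eps)$, where $\eta_\eps$ is a cutoff at scale $\eps$ around $0$. The error terms are bounded as follows. By Hölder,
\[
\int_{B_{2\eps}}|U|^2|\nabla\eta_\eps|\lesssim \eps^{-1}(M\eps)^{2/3}\eps^{4/3}=CM^{2/3}\eps\to0 .
\]
Similarly $\eps^{-1}\int_{B_{2\eps}}(|U|^3+|P^\sharp||U|)\lesssim M\to$ bounded. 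I would refine this last bound with a log-type cutoff, or via the Dirichlet bound of Step 2 applied on small balls, to make it vanish. The same Caccioppoli estimate as in Step 2 gives $U\in H^1_{\rm loc}(\R^4)$. Then $(U,P^\sharp)$ is a suitable weak solution in $\R^4$, and by four-dimensional stationary regularity (Gerhardt \cite{Gerhardt1979}) it is smooth, including at the origin.

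\emph{Step 2 (finite energy).} Apply the local energy inequality with $\phi_R$ equal to $1$ on $B_R$, supported in $B_{2R}$, with $|\nabla\phi_R|\lesssim R^{-1}$ and $|\Delta\phi_R|\lesssim R^{-2}$. Hölder gives:
\[
R^{-2}\int_{B_{2R}}|U|^2\lesssim R^{-2}(MR)^{2/3}R^{4/3}=CM^{2/3},\qquad
R^{-1}\int_{B_{2R}}\bigl(|U|^3+|P^\sharp||U|\bigr)\lesssim M .
\]
Hence $\int_{B_R}|\nabla U|^2\le C(M)$ uniformly in $R$, and so $\nabla U\in L^2(\R^4)$.

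\emph{Step 3 (decay and vanishing).} By Sobolev in $\R^4$, there is a constant $a$ with $U-a\in L^4(\R^4)$. The Morrey bound forces $a=0$, since $R^{-1}\int_{B_R}|a|^3\sim|a|^3R^3$. So $U\in L^4$, and Calderón–Zygmund applied to $P^\sharp=R_iR_j(U_iU_j)$ gives $P^\sharp\in L^2(\R^4)$; uniqueness of the Morrey-bounded representative identifies this with the canonical pressure. Now redo Step 2 on the annulus $A_R=B_{2R}\setminus B_R$ using $L^4$ bounds:
\[
R^{-2}\int_{A_R}|U|^2\lesssim\|U\|_{L^4(A_R)}^2,\quad
R^{-1}\int_{A_R}|U|^3\lesssim\|U\|^3_{L^4(A_R)},\quad
R^{-1}\int_{A_R}|P^\sharp||U|\lesssim\|P^\sharp\|_{L^2(A_R)}\|U\|_{L^4(A_R)} .
\]
All three tend to $0$ as $R\to\infty$. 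Since the solution is smooth, the energy identity holds with equality, and letting $R\to\infty$ yields $\int_{\R^4}|\nabla U|^2=0$. Thus $U$ is constant, hence $U=0$; then $\nabla P^\sharp=0$ and $P^\sharp\in L^2$ force $P^\sharp=0$, i.e.\ $P_0\equiv c$.

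\emph{Main obstacle.} I expect the hardest step to be Step 1, namely justifying that the equation and the (in)equality extend across the origin with control of the $\eps^{-1}\int_{B_{2\eps}}|P^\sharp||U|$ term, which is only bounded by the Morrey norm, not small. I would handle it by first applying Step 2's Caccioppoli estimate on $\R^4\setminus\{0\}$ with cutoffs vanishing near $0$, to get $\nabla U\in L^2$ near $0$. Then $U\in L^4$ near $0$ and $P^\sharp\in L^2$ near $0$, which upgrades the error to $\|P^\sharp\|_{L^2(B_{2\eps})}\|U\|_{L^4(B_{2\eps})}\to0$.
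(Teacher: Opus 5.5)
Your proposal is correct and follows essentially the paper's proof: the canonical pressure from Proposition \ref{prop:auto-pressure}, the punctured Caccioppoli estimate with uniformly bounded inner errors giving $U\in H^1_{\rm loc}(\R^4)$, the scale-invariant bound $\int_{B_R}|\nabla U|^2\le C$, Sobolev to get $U\in L^4(\R^4)$, then $P^\sharp\in L^2(\R^4)$ and the vanishing tail estimate on $A_R$. There are two minor differences: you pass the local energy equality across the origin with the inner cutoff only after upgrading to $U\in L^4_{\rm loc}$ (legitimate, since then e.g.\ $\eps^{-2}\int_{A_\eps}|U|^2\lesssim\|U\|_{L^4(B_{2\eps})}^2\to0$), whereas the paper mollifies and tests the extended weak equation with $U_m\phi$; and the identification $P^\sharp=\mathcal R_i\mathcal R_j(U_iU_j)$ needs the short Liouville argument for the harmonic difference, as in the paper, not the uniqueness clause of Proposition \ref{prop:auto-pressure}, which only excludes constant shifts.
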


\begin{Rem}
\label{rem:cubic-high-dimensional-obstruction}
The velocity and pressure powers in the cubic formulation remain $3$ and $3/2$ in
every dimension, while Navier--Stokes scaling places the factor $R^{3-n}$ in front of
the integrals.  The present argument does not prove the corresponding assertion when
$n\geq6$.  Indeed, the positive-head-pressure part requires an $L^3$ estimate for
$W=H_+^{3/4}$.  The embedding
\[
 H^1(\R^n)\hookrightarrow L^{2n/(n-2)}(\R^n)
\]
is exactly critical for this purpose when $n=6$ and does not reach $L^3$ when $n>6$.
At the six-dimensional endpoint the resulting gradient term has no small factor and
cannot be absorbed under a mere boundedness assumption.  Thus $n\geq6$ remains open
for this cubic Morrey method; no failure of the rigidity statement itself is asserted.
\end{Rem}

\subsection{Main ideas and innovations}

The proof is organized around the following four observations.  The first two are the
new rigidity mechanisms; the last two connect them to the local regularity criteria
and make the main theorems velocity-only.

\medskip
\noindent\textbf{1. RHP, the normal-flux obstruction, and the monotone flux $\Xi_1$.}
For a smooth punctured solution the head-pressure equation gives RHP automatically (see Proposition \ref{prop:smooth-RHP}).
With $W=H_+^{3/4}$, define
\[
 \Xi_1(R)=\int_{\partial B_R}
 \left(\frac43W\nabla W-\frac23UW^2\right)\!\cdot e_r\,dS
\]
for almost every $R$.  Up to sign, this is the five-dimensional head-pressure flux
of \cite{BGLWX2025}.  Their proof controls the two classical boundary terms
\[
 \int_{\partial B_R}\partial_rH_+\,H_+^{1/2}\,dS,
 \qquad
 \int_{\partial B_R}U_rH_+^{3/2}\,dS
\]
by the scale-sharp bounds $|H(x)|\lesssim |x|^{-2}$,
$|\nabla H(x)|\lesssim |x|^{-3}$, and $|U(x)|\lesssim |x|^{-1}$.  A cubic Morrey bound yields only
$H\in L^{3/2}_{\rm loc}$ at the correct scale, so it supplies no corresponding
fixed-sphere trace estimate; after suitable-weak convergence the normal derivative
need not have a classical trace at all.  Directly repeating the pointwise proof is
also circular: flux control is used to prove $\nabla W\in L^2$, whereas the Sobolev
gain
$W\in L^{10/3}$ (equivalently $H_+\in L^{5/2}$) needed to make the endpoint flux
vanish is obtained only after that energy bound.

\textbf{One key observation:} by testing RHP on annuli, H\"older and the five-dimensional
annular Gagliardo--Nirenberg inequality absorb the critical term
$R^{-1}\int_{A_R}|U|W^2$ by a hole-filling iteration.  This gives a uniform annular
$L^2$ bound for $\nabla W$ and an averaged bound for $\Xi_1$ without a classical normal
trace.  Monotonicity then yields the global energy bound, Sobolev gives
$W\in L^{10/3}$, and a second averaged estimate makes both endpoint fluxes vanish.
Thus $\Xi_1\equiv0$ and $H\leq0$.  Strong annular pressure convergence also passes the
reduced RHP inequality to suitable-weak tangents.  This proves only the sufficient
implication $\mathrm{RHP}\Rightarrow H\leq0$, not that suitability alone implies RHP (see Subsection 2.4 for more details).

\medskip
\noindent\textbf{2. The monotone radial-defect flux $\Xi_2$ and its endpoint problem.}
Under $H\le0$ the exact identity
\[
 \Xi_2(R_1)-\Xi_2(R_2)
 =\int_{B_{R_2}\setminus B_{R_1}}
   \frac{3U_r^2-2H}{|x|^3}\,dx,
 \qquad
 \Xi_2(R)=R^{-2}\int_{\partial B_R}(U_r^2+P)\,dS,
\]
has a nonnegative defect.  Pointwise Type-I estimates give direct control of such a
flux on each sphere, but cubic Morrey bounds yield only the dyadic averaged estimate
\[
 \frac1R\int_R^{2R}|\Xi_2(s)|\,ds\leq C;
\]
this does not by itself imply that either endpoint value is zero.  The present proof uses monotonicity to obtain finite endpoint limits, which are then identified through blow-up at the origin and blow-down at infinity. By Proposition~\ref{prop:zero-defect}, each limiting zero-defect tangent is curl-free, divergence-free, and tangential to spheres, and is therefore trivial. Weak
pressure convergence then forces both endpoint fluxes to vanish, so $\Xi_2\equiv0$ and
$U=P=0$.

\medskip
\noindent\textbf{3. Local scale upgrading and nontrivial suitable-weak tangents.}
A bounded $E_5$ controls only the oscillation of $u$ at first.  Dyadic telescoping
controls $r|(u)_{B_r}|$ and hence $C_5$; a localized Calder\'on--Zygmund decomposition
and decay of the harmonic pressure then give
\[
 D_5(\theta r)\le C\theta^3D_5(r)+C\theta^{-2}C_5(2r).
\]
Thus either the $E_5$- or the $C_5$-hypothesis yields both compactness bounds.  If the
point were singular, the one-scale criterion keeps the suitable-weak global tangent
nonzero, while the first two observations force it to vanish.

\medskip
\noindent\textbf{4. Velocity-only pressure reconstruction and the dimension split.}
Appendix \ref{app:pressure} constructs a unique canonical $P^\sharp$ from $U\otimes U$
for $n=4,5$ and proves the correctly scaled $L^{3/2}$ Morrey bound.  A Bogovskii
cutoff removes the possible momentum defect at the puncture and shows
$P_0-P^\sharp$ is constant.  This reduction makes both main rigidity theorems
velocity-only.  The five-dimensional proof then uses RHP, $\Xi_1$, and $\Xi_2$, whereas
the four-dimensional proof closes through finite Dirichlet energy and
$\dot H^1(\R^4)\hookrightarrow L^4(\R^4)$.

The remainder of this paper is organized as follows. Section~2 develops the weak head-pressure framework. We establish scale-invariant compactness and pressure estimates, derive the renormalized head-pressure inequality for smooth solutions, and prove that the RHP structure forces the head pressure to be nonpositive. Section~3 proves the weak sign-rigidity theorem through the monotone radial-defect flux and the classification of zero-defect tangents. In Section~4, we combine these mechanisms with the canonical pressure reconstruction to prove the five-dimensional velocity-only critical Morrey rigidity theorem. Section~5 applies this rigidity result to isolated singularities and proves the boundedness regularity criteria. Section~6 treats the four-dimensional Morrey rigidity theorem by a separate finite-Dirichlet-energy argument. The appendix contains the velocity-based pressure reconstruction and the pressure-free annular estimates used throughout the paper.

\section{Preliminaries and the head-pressure sign mechanism}

This section develops the analytic framework used in both the rigidity and regularity
arguments.  We first obtain scale-invariant Caccioppoli bounds, suitable-weak
compactness, and strong pressure convergence on compact annuli.  We then derive RHP
for smooth solutions by a convex positive-part approximation, record the additional
renormalization needed in the weak class, and prove $H\leq0$ through annular estimates
for $W=H_+^{3/4}$ and the distributional monotone flux $\Xi_1$.  The essential new point
is that no classical normal trace on a prescribed sphere is used.

\subsection{Scale-invariant bounds and compactness}

\begin{Lem}
\label{lem:Cacc-u}
Let $(v,q)$ be a suitable weak solution to \eqref{eq:NS} in $B_{2R}\subset\R^5$ and suppose
\[
 R^{-2}\int_{B_{2R}}|v|^3\,dx\leq M_0,
 \qquad
 R^{-2}\int_{B_{2R}}|q|^{3/2}\,dx\leq K_0.
\]
Then
\begin{equation}
 R^{-1}\int_{B_R}|\nabla v|^2\,dx
 \leq C\left(M_0^{2/3}+M_0+M_0^{1/3}K_0^{2/3}\right).
 \label{eq:Cacc-u}
\end{equation}
\end{Lem}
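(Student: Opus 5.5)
The plan is to test the local energy inequality \eqref{eq:LEI} with a standard cutoff and bound each right-hand term by Hölder's inequality.

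Fix $\phi\in C_c^\infty(B_{2R})$ with $0\le\phi\le1$, $\phi\equiv1$ on $B_R$, $|\nabla\phi|\le C/R$ and $|\Delta\phi|\le C/R^2$. Since $\phi\ge0$, \eqref{eq:LEI} gives
\[
 \int_{B_R}|\nabla v|^2\,dx\le \frac{C}{R^2}\int_{B_{2R}}|v|^2\,dx+\frac{C}{R}\int_{B_{2R}}|v|^3\,dx+\frac{C}{R}\int_{B_{2R}}|q|\,|v|\,dx .
\]
No absorption or iteration is needed. The left side already lives on $B_R$, and every term on the right involves only $v$ and $q$, not $\nabla v$.

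The three terms are estimated as follows, using $|B_{2R}|\simeq R^5$.

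\textbf{Quadratic term.} Hölder with exponents $3/2$ and $3$ gives
\[
 \int_{B_{2R}}|v|^2\le C R^{5/3}\Bigl(\int_{B_{2R}}|v|^3\Bigr)^{2/3}\le CR^{5/3}(M_0R^2)^{2/3}=CR^3M_0^{2/3}.
\]
Dividing by $R^2$ and then by $R$ contributes $CM_0^{2/3}$ to $R^{-1}\int_{B_R}|\nabla v|^2$.

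\textbf{Cubic term.} Directly, $R^{-1}\int_{B_{2R}}|v|^3\le M_0R$. After dividing by $R$ this contributes $CM_0$.

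\textbf{Pressure term.} Hölder with exponents $3/2$ and $3$ gives
\[
 \int_{B_{2R}}|q||v|\le \Bigl(\int_{B_{2R}}|q|^{3/2}\Bigr)^{2/3}\Bigl(\int_{B_{2R}}|v|^3\Bigr)^{1/3}\le (K_0R^2)^{2/3}(M_0R^2)^{1/3}=R^2K_0^{2/3}M_0^{1/3}.
\]
Multiplying by $R^{-1}$ and dividing by $R$ contributes $CM_0^{1/3}K_0^{2/3}$.

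Summing the three contributions yields \eqref{eq:Cacc-u}. The only point requiring care is the bookkeeping of powers of $R$, which all cancel by the scaling \eqref{eq:scaling}. Otherwise the proof is routine.
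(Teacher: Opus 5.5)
Your proposal is correct and follows essentially the same argument as the paper. Both insert a standard cutoff $\phi$ (equal to one on $B_R$, supported in $B_{2R}$) into the local energy inequality and bound the quadratic, cubic and pressure--velocity terms by H\"older's inequality. Your bookkeeping of the powers of $R$ in each term is correct.
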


\begin{proof}
Choose a cutoff $\eta$ supported in $B_{2R}$, equal to one in $B_R$, and satisfying
$|\nabla\eta|\leq C/R$ and $|\Delta\eta|\leq C/R^2$.  Insert $\eta$ in
\eqref{eq:LEI}.  The resulting terms are bounded by
\[
 CR^{-2}\int_{B_{2R}}|v|^2
 +CR^{-1}\int_{B_{2R}}|v|^3
 +CR^{-1}\int_{B_{2R}}|q||v|.
\]
H\"older's inequality gives
\[
 \int_{B_{2R}}|v|^2\leq CM_0^{2/3}R^3
\]
and
\[
 \int_{B_{2R}}|q||v|
 \leq
 \left(\int|q|^{3/2}\right)^{2/3}
 \left(\int|v|^3\right)^{1/3}
 \leq CK_0^{2/3}M_0^{1/3}R^2.
\]
Dividing the resulting estimate by $R$ proves \eqref{eq:Cacc-u}.
\end{proof}

\begin{Lem}
\label{lem:compactness}
Let $(v_k,q_k)$ be suitable weak solutions on an increasing sequence of balls exhausting
$\R^5$.  Assume that, for every $R>0$,
\[
 R^{-2}\int_{B_R}|v_k|^3\,dx\leq M,
 \qquad
 R^{-2}\int_{B_R}|q_k|^{3/2}\,dx\leq K
\]
for all sufficiently large $k$.  Then a subsequence converges to a global suitable weak
solution $(V,Q)$ such that, for every finite $R$,
\begin{align}
 v_k&\rightharpoonup V &&\text{in }H^1(B_R),\label{eq:weakH1}\\
 v_k&\to V &&\text{in }L^3(B_R),\label{eq:strongL3}\\
 q_k&\rightharpoonup Q &&\text{in }L^{3/2}(B_R).\label{eq:weakp}
\end{align}
The same Morrey bounds hold for $(V,Q)$.
\end{Lem}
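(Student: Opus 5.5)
The plan is to combine the uniform bounds of Lemma \ref{lem:Cacc-u} with a diagonal extraction, and then pass to the limit in the weak formulation and in the local energy inequality \eqref{eq:LEI}.

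\textbf{Uniform bounds and extraction.} Fix $R>0$. For $k$ large, $(v_k,q_k)$ is suitable on $B_{2R}$, and the hypotheses at radius $2R$ give
\[
R^{-2}\int_{B_{2R}}|v_k|^3\le 4M,\qquad R^{-2}\int_{B_{2R}}|q_k|^{3/2}\le 4K .
\]
Lemma \ref{lem:Cacc-u} then yields a uniform bound on $\int_{B_R}|\nabla v_k|^2$. Together with the $L^3$ bound, this bounds $v_k$ in $H^1(B_R)$. The pressures are bounded in $L^{3/2}(B_R)$. Taking $R=1,2,3,\dots$ and extracting diagonally, we obtain a subsequence with \eqref{eq:weakH1} and \eqref{eq:weakp} for every $R$. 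By Rellich, $H^1(B_R)$ embeds compactly in $L^q(B_R)$ for $q<10/3$, which is the five-dimensional Sobolev exponent. Since $3<10/3$, we get \eqref{eq:strongL3}.

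\textbf{Passing to the limit.} Consider the quantities appearing in \eqref{eq:LEI}.
\begin{itemize}
\item[(i)] Strong $L^3$ convergence gives $|v_k|^2\to|V|^2$ in $L^{3/2}$, hence $v_k\otimes v_k\to V\otimes V$ in $L^1_{\rm loc}$. It also gives $|v_k|^2v_k\to|V|^2V$ in $L^1_{\rm loc}$.
\item[(ii)] $q_kv_k\rightharpoonup QV$ weakly in $L^1_{\rm loc}$, because it is the product of a weakly convergent sequence in $L^{3/2}$ and a strongly convergent sequence in $L^3$.
\item[(iii)] $\int|\nabla V|^2\phi\le\liminf\int|\nabla v_k|^2\phi$ for $\phi\ge0$, by weak lower semicontinuity in $H^1$.
\end{itemize}
Items (i) and (ii) pass the equations $-\Delta v_k+\diver(v_k\otimes v_k)+\nabla q_k=0$ and $\diver v_k=0$ to the limit in $\cD'$. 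All three items together pass \eqref{eq:LEI} to the limit for every nonnegative $\phi\in C_c^\infty(\R^5)$. Hence $(V,Q)$ is a global suitable weak solution.

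\textbf{Morrey bounds for the limit.} For the velocity, strong convergence gives $\int_{B_R}|V|^3=\lim\int_{B_R}|v_k|^3\le MR^2$. For the pressure, weak lower semicontinuity of the $L^{3/2}$ norm gives $\int_{B_R}|Q|^{3/2}\le\liminf\int_{B_R}|q_k|^{3/2}\le KR^2$.

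The only step needing care is the product term $q_kv_k$ in the energy inequality. It is handled because the strong $L^3$ convergence pairs exactly with the weak $L^{3/2}$ convergence of the pressure, so strong pressure convergence is not needed here.
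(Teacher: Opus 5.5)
Your proof is correct and follows the paper's argument step for step. Both use the uniform local $H^1$ bound from Lemma~\ref{lem:Cacc-u}, Rellich compactness into $L^3$ (since $3<10/3$) with diagonal extraction, weak-times-strong convergence of $q_kv_k$ together with lower semicontinuity of the gradient term in the local energy inequality, and strong convergence (velocity) or weak lower semicontinuity (pressure) for the limiting Morrey bounds; you additionally make explicit the radius-$2R$ rescaling (the factor $4$) that the paper leaves implicit.
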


\begin{proof}
Lemma \ref{lem:Cacc-u} gives a uniform local $H^1$ bound.  Since
$3<2^*=10/3$ in dimension five, Rellich's theorem yields the compact embedding
$H^1(B_R)\Subset L^3(B_R)$, proving \eqref{eq:strongL3} after diagonal extraction.
Weak compactness gives \eqref{eq:weakH1} and \eqref{eq:weakp}.  The nonlinear term
passes to the limit by strong $L^3$ convergence.  In the local energy inequality, the
gradient term is lower semicontinuous and
$q_kv_k\rightharpoonup QV$ in $L^1$, since $q_k$ is weak in $L^{3/2}$ and $v_k$ is
strong in $L^3$.  Thus the limit is suitable.  The Morrey bounds follow from strong
convergence for the velocity and weak lower semicontinuity for the pressure.
\end{proof}

\begin{Lem}
\label{lem:pressure-strong}
Let $G\subset\R^5$ be open.  Suppose $(v_k,q_k)$ solve the stationary
Navier--Stokes equations \eqref{eq:NS} in $G$ and
\[
 v_k\to V\quad\hbox{strongly in }L^3_{\rm loc}(G),
 \qquad
 q_k\rightharpoonup Q\quad\hbox{weakly in }L^{3/2}_{\rm loc}(G).
\]
Then, after passage to a subsequence,
\[
 q_k\to Q\quad\hbox{strongly in }L^{3/2}_{\rm loc}(G).
\]
\end{Lem}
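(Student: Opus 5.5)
The plan is to use the pressure equation $-\Delta q_k=\partial_i\partial_j(v_k^iv_k^j)$, which follows from taking the divergence of \eqref{eq:NS} since $\diver v_k=0$. We split $q_k$ locally into a Calder\'on--Zygmund part and a harmonic part. Fix a ball $B$ with $\overline{2B}\subset G$ (a countable cover of compact subsets reduces to this case). Let $\zeta\in C_c^\infty(2B)$ with $\zeta=1$ on a neighbourhood of $\overline{B'}$, where $B\Subset B'\Subset 2B$. Define
\[
 q_k^{(1)}:=R_iR_j\big(\zeta\, v_k^iv_k^j\big),\qquad h_k:=q_k-q_k^{(1)},
\]
where $R_i$ are the Riesz transforms on $\R^5$. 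The same construction with $V$ gives $Q^{(1)}:=R_iR_j(\zeta V^iV^j)$ and $h:=Q-Q^{(1)}$. On $B'$ we have $-\Delta q_k^{(1)}=\partial_i\partial_j(v_k^iv_k^j)$, so $h_k$ is harmonic in $B'$.

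\textbf{Step 1 (local part).} Strong $L^3(2B)$ convergence of $v_k$ gives $\zeta v_k\otimes v_k\to\zeta V\otimes V$ in $L^{3/2}(\R^5)$. Boundedness of $R_iR_j$ on $L^{3/2}$ then yields $q_k^{(1)}\to Q^{(1)}$ strongly in $L^{3/2}(\R^5)$. Consequently $h_k\rightharpoonup h$ weakly in $L^{3/2}(B')$, because $q_k\rightharpoonup Q$ weakly there.

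\textbf{Step 2 (harmonic part).} The sequence $h_k$ is bounded in $L^{3/2}(B')$, since weakly convergent sequences are bounded. By the mean value property, every derivative of $h_k$ is then uniformly bounded on $\overline B$. Arzel\`a--Ascoli gives a subsequence converging uniformly on $\overline B$, and by uniqueness of weak limits the limit is $h$. Hence $h_k\to h$ in $L^{3/2}(B)$. Combined with Step 1, $q_k=q_k^{(1)}+h_k\to Q$ strongly in $L^{3/2}(B)$. A diagonal extraction over a countable cover of $G$ by such balls then gives convergence in $L^{3/2}_{\rm loc}(G)$.

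\textbf{Main obstacle.} The only delicate point is justifying that $h_k$ is harmonic, since the equations hold only distributionally with $q_k\in L^{3/2}_{\rm loc}$. I would verify this by noting that $\Delta h_k=0$ in $\mathcal D'(B')$, which follows from the divergence of \eqref{eq:NS} in $\mathcal D'$ together with $\zeta=1$ on $B'$. Weyl's lemma then makes $h_k$ smooth and harmonic. The same reasoning applied to the limit shows $h$ is harmonic, although this is not needed: the uniform limit of the $h_k$ is automatically harmonic. If the $v_k$ were not smooth, the only input required is $v_k\in L^3_{\rm loc}$, and that is already part of the hypotheses.
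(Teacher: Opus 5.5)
Your proposal is correct and follows the paper's proof essentially step for step. It uses the same cutoff Riesz-transform splitting $q_k=\mathcal R_i\mathcal R_j(\zeta v_k^iv_k^j)+h_k$, obtains strong $L^{3/2}$ convergence of the local part from strong $L^3$ convergence of $v_k$, controls $h_k$ by interior harmonic estimates and Arzel\`a--Ascoli, identifies the limit through the weak convergence, and finishes with a diagonal argument over a countable cover.
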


\begin{proof}
Fix concentric balls
\[
 B\Subset B_0\Subset B_1\Subset G
\]
and choose $\chi\in C_c^\infty(B_1)$ with $\chi=1$ on a neighborhood of
$\overline{B_0}$.  Extend $\chi v_{k,i}v_{k,j}$ by zero and set on $\R^5$
\[
 q_k^{(1)}:=\mathcal R_i\mathcal R_j
 \bigl(\chi v_{k,i}v_{k,j}\bigr),
\]
where $\mathcal R_i$ are the Riesz transforms and repeated indices are summed.  Since
$v_k\to V$ strongly in $L^3(B_1)$,
\[
 \chi v_{k,i}v_{k,j}\longrightarrow \chi V_iV_j
 \quad\hbox{strongly in }L^{3/2}(\R^5).
\]
The Calder\'on--Zygmund theorem therefore gives, with
$q^{(1)}:=\mathcal R_i\mathcal R_j(\chi V_iV_j)$,
\begin{equation}
 q_k^{(1)}\longrightarrow q^{(1)}
 \quad\hbox{strongly in }L^{3/2}(\R^5).
 \label{eq:pressure-CZ-strong}
\end{equation}

Taking the divergence of the momentum equation gives
\[
 -\Delta q_k=\partial_i\partial_j(v_{k,i}v_{k,j})
\]
in distributions.  Since $\chi=1$ on $B_0$, the difference
\[
 h_k:=q_k-q_k^{(1)}
\]
is harmonic in $B_0$.  Weak convergence of $q_k$ in $L^{3/2}(B_0)$ and
\eqref{eq:pressure-CZ-strong} imply
\[
 \sup_k\|h_k\|_{L^{3/2}(B_0)}<\infty.
\]
For every integer $m\geq0$, the interior estimates for harmonic functions yield
\begin{equation}
 \|h_k\|_{C^m(\overline B)}
 \leq C_{m,B,B_0}\|h_k\|_{L^{3/2}(B_0)}.
 \label{eq:harmonic-pressure-interior}
\end{equation}
Thus Arzel\`a--Ascoli and a diagonal argument in $m$ give, after extraction,
$h_k\to h$ in $C^m(\overline B)$ for every $m$.  On $B$ we consequently have
\[
 q_k=q_k^{(1)}+h_k\longrightarrow q^{(1)}+h
 \quad\hbox{strongly in }L^{3/2}(B).
\]
The same sequence converges weakly to $Q$ there, so uniqueness of the distributional
limit gives $Q=q^{(1)}+h$ on $B$.  Hence $q_k\to Q$ strongly in $L^{3/2}(B)$.
Applying this construction to a countable locally finite family of balls whose compact
subballs cover $G$, and taking one final diagonal subsequence, proves strong convergence
on every compact subset of $G$.
\end{proof}

\subsection{Head pressure and renormalized structure}

\begin{Prop}
\label{prop:smooth-RHP}
Let $G\subset\R^5$ be open, and let $(U,P)\in C^\infty(G;\R^5\times\R)$ solve \eqref{eq:NS} in $G$. Then $W\in H^1_{\rm loc}(G)$,
\begin{equation}
-\Delta z+\diver(Uz)+|\Omega|^2\1_{\{H>0\}}\leq0
\quad\text{in }\cD'(G),
\label{eq:smooth-Kato-head}
\end{equation}
where $z=H_+$, and
\begin{equation}
\diver\left(\frac43W\nabla W-\frac23UW^2\right)
\geq
\frac89|\nabla W|^2+|\Omega|^2H_+^{1/2}
\quad\text{in }\cD'(G),
\label{eq:smooth-RHP}
\end{equation}
where $W=z^{\frac34}$.
In particular, every smooth stationary Navier--Stokes solution satisfies $\mathrm{(RHP)}$ on its smoothness set.
\end{Prop}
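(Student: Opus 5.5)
The plan is to start from the classical head-pressure identity for smooth solutions and then renormalize it through a family of smooth convex approximations of $s\mapsto s_+$ and $s\mapsto s_+^{3/2}$. Using $\diver U=0$ and the identity $(U\cdot\nabla)U=\nabla\frac{|U|^2}{2}-\Omega U$, where $\Omega=\nabla U-\nabla U^{T}$ (up to the normalization of $|\Omega|^2$ fixed in the paper), the momentum equation takes the form $\nabla H=\Delta U+\Omega U$. Taking the divergence and using $\diver\Delta U=0$ gives, pointwise in $G$,
\[
 \Delta H-U\cdot\nabla H=|\Omega|^2 ,
 \qquad\text{equivalently}\qquad
 -\Delta H+\diver(UH)=-|\Omega|^2 .
\]
I will check this identity once by a direct index computation, with $\Omega_{ij}=\partial_jU_i-\partial_iU_j$, and fix the constant in $|\Omega|^2$ accordingly.

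For \eqref{eq:smooth-Kato-head}, take smooth convex $\varphi_\eps$ with $0\le\varphi_\eps'\le1$, $\varphi_\eps''\ge0$, $\varphi_\eps(s)\to s_+$ locally uniformly, and $\varphi_\eps'\to\1_{(0,\infty)}$ pointwise. Since $H$ is smooth, the chain rule gives
\[
 -\Delta\varphi_\eps(H)+\diver\big(U\varphi_\eps(H)\big)
 =\varphi_\eps'(H)\big(-\Delta H+U\cdot\nabla H\big)-\varphi_\eps''(H)|\nabla H|^2 .
\]
This is at most $-\varphi_\eps'(H)|\Omega|^2$. Pair it with a nonnegative test function and let $\eps\to0$. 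The left side converges by uniform convergence. The term $\int\varphi_\eps'(H)|\Omega|^2\phi$ converges to $\int\1_{\{H>0\}}|\Omega|^2\phi$ by dominated convergence, and this gives \eqref{eq:smooth-Kato-head}.

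For \eqref{eq:smooth-RHP}, take $\psi(s)=s_+^{3/2}$, so that $W^2=\psi(H)$. On $\{H>0\}$ the same computation gives
\[
 \Delta\psi(H)-\diver\big(U\psi(H)\big)=\psi'(H)|\Omega|^2+\psi''(H)|\nabla H|^2 .
\]
Here $\psi'=\frac32z^{1/2}$ and $\psi''=\frac34z^{-1/2}$. Since $|\nabla W|^2=\frac9{16}z^{-1/2}|\nabla z|^2$, the last term equals $\frac43|\nabla W|^2$. Multiplying by $\frac23$ and using $\Delta(W^2)=\diver(2W\nabla W)$ produces exactly
\[
 \diver\Big(\tfrac43W\nabla W-\tfrac23UW^2\Big)=\tfrac89|\nabla W|^2+|\Omega|^2H_+^{1/2}
\]
on the open set $\{H>0\}$. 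The only issue is the degenerate set $\{H=0\}$, where $\psi''$ is singular.

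To handle it, I use $\psi_\eps(s)=(\varphi_\eps(s)+\eps)^{3/2}-\eps^{3/2}$ with $\varphi_\eps$ as above. These are smooth and convex, $\psi_\eps\to\psi$ in $C^1_{\rm loc}$, and one can choose them so that $\psi_\eps''(s)\ge\frac34(s_++\eps)^{-1/2}\1_{\{s>\eps\}}$. Applying the identity to $\psi_\eps(H)$ and testing against $\phi\ge0$ in $C_c^\infty(G)$ gives
\[
 \int\psi_\eps''(H)|\nabla H|^2\phi+\int\psi_\eps'(H)|\Omega|^2\phi
 =\int\psi_\eps(H)\Delta\phi+\int U\psi_\eps(H)\cdot\nabla\phi .
\]
The right side is bounded uniformly in $\eps$ and converges to $\int W^2\Delta\phi+\int UW^2\cdot\nabla\phi$. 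Fatou's lemma then gives two conclusions:
\begin{itemize}
\item $\int_{\{H>0\}}z^{-1/2}|\nabla z|^2\phi<\infty$, which yields $\nabla W\in L^2_{\rm loc}$. Together with local boundedness of $W$, this gives $W\in H^1_{\rm loc}(G)$, since $W=z^{3/4}$ is locally Lipschitz away from $\{z=0\}$ and continuous across it, so the distributional gradient equals $\1_{\{H>0\}}\frac34z^{-1/4}\nabla z$.
\item After multiplying by $\frac23$, the inequality \eqref{eq:smooth-RHP} holds in $\cD'(G)$, because $W\nabla W=\frac12\nabla(W^2)$ and the distributional identity $\Delta(W^2)=\diver(2W\nabla W)$ is legitimate once $W\in H^1_{\rm loc}$.
\end{itemize}

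Dropping the nonnegative term $|\Omega|^2H_+^{1/2}$ then yields \eqref{eq:RHP}, i.e.\ property $\mathrm{(RHP)}$ on $G$.

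I expect the main obstacle to be this last limiting step. It requires a careful choice of $\psi_\eps$ with a quantitative lower bound on $\psi_\eps''$, and a check that the Fatou limit identifies $\frac43|\nabla W|^2$ exactly, with no loss on $\{H=0\}$, where $\nabla H$ may be nonzero. On that set $\nabla W=0$ a.e., by the Stampacchia-type property of $H^1$ functions, so no contribution is lost there.
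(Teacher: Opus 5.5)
Your proof is correct. After the Kato step it takes a different route from the paper. The Kato step itself matches the paper's Step~1, and your derivation of the head-pressure identity via $\nabla H=\Delta U+\Omega U$ is equivalent to the paper's sum of the pressure Poisson equation and the energy identity.

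The paper never composes $H$ with a smoothed $s_+^{3/2}$. Instead it:
\begin{itemize}
\item rewrites the Kato inequality as $-\Delta z+\diver(Uz)+|\Omega|^2\1_{\{H>0\}}=-\mu$ with a defect measure $\mu\ge0$;
\item tests this with the Lipschitz functions $\eta^2\Theta_\delta(z)$, where $\Theta_\delta(s)=\sqrt{s+\delta}-\sqrt{\delta}$, handling the drift through the antiderivatives $A_\delta,B_\delta$, and drops the $\mu$-term;
\item obtains $W\in H^1_{\rm loc}$ by weak compactness of the Lipschitz functions $(z+\delta)^{3/4}-\delta^{3/4}$;
\item passes to the limit in the flux inequality by weak lower semicontinuity.
\end{itemize}
Your chain rule for the smooth convex $\psi_\eps(H)$ gives an exact identity instead. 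You therefore need no defect measure and no admissibility argument for nonsmooth test functions. Fatou's lemma with the pointwise bound $\psi_\eps''(s)\ge\frac34(s_++\eps)^{-1/2}\1_{\{s>\eps\}}$ replaces lower semicontinuity. For smooth solutions this is shorter. The paper's version, by contrast, uses only the one-sided inequality for $z=H_+\in W^{1,\infty}_{\rm loc}$. It thus isolates a renormalization principle that does not need the exact equation for $H$: a nonnegative Kato subsolution yields the flux inequality for $z^{3/4}$.

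One step in your write-up needs a better justification. The phrase ``locally Lipschitz away from $\{z=0\}$ and continuous across it'' does not by itself identify the distributional gradient. The Cantor function is continuous and locally constant off a closed null set, yet its derivative is a singular measure. What saves you is that $W\equiv0$ on the exceptional set. For instance, $W_\delta=(z+\delta)^{3/4}-\delta^{3/4}$ is locally Lipschitz with $|\nabla W_\delta|\le\frac34 z^{-1/4}|\nabla z|\1_{\{H>0\}}$ a.e., and this bound lies in $L^2_{\rm loc}$ by your Fatou estimate. Dominated convergence then gives $W\in H^1_{\rm loc}(G)$ with $\nabla W=\frac34z^{-1/4}\nabla z\,\1_{\{H>0\}}$. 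This also makes your appeal to the Stampacchia property unnecessary.
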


\begin{proof}
We divide the proof into three steps.

\medskip
\noindent
\textbf{Step 1. The head-pressure identity and the positive-part inequality.}
Since $(U,P)$ is smooth, taking the divergence of $\eqref{eq:NS}_1$ gives
\[
-\Delta P=\partial_iU_j\,\partial_jU_i.
\]
On the other hand, taking the scalar product of the momentum equation with $U$ yields
\[
-\Delta\frac{|U|^2}{2}
+\diver(UH)
+|\nabla U|^2=0.
\]
Adding the two identities and using
\[
|\nabla U|^2-\partial_iU_j\,\partial_jU_i
=
\frac12\sum_{i,j=1}^5(\partial_iU_j-\partial_jU_i)^2
=
|\Omega|^2,
\]
we obtain the pointwise identity
\begin{equation}
-\Delta H+\diver(UH)=-|\Omega|^2
\quad\text{in }G.
\label{eq:smooth-head-identity}
\end{equation}

Choose a nondecreasing function $\beta\in C^\infty(\R)$ such that
\[
0\leq\beta\leq1,
\qquad
\beta(s)=0\quad\text{for }s\leq0,
\qquad
\beta(s)=1\quad\text{for }s\geq1,
\]
and define, for $\varepsilon>0$,
\[
\rho_\varepsilon(s)
:=
\int_0^s\beta(t/\varepsilon)\,dt.
\]
Then $\rho_\varepsilon\in C^\infty(\R)$ is nondecreasing and convex,
\[
\rho_\varepsilon'(s)=\beta(s/\varepsilon),
\qquad
\rho_\varepsilon''(s)=\varepsilon^{-1}\beta'(s/\varepsilon)\geq0,
\]
and
\begin{equation}
0\leq\rho_\varepsilon(s)\leq s_+,
\qquad
|\rho_\varepsilon(s)-s_+|\leq\varepsilon,
\qquad
\rho_\varepsilon'(s)\longrightarrow\1_{\{s>0\}}
\quad\text{for every }s\in\R.
\label{eq:positive-part-approximation}
\end{equation}

Since $\diver U=0$, the classical chain rule and \eqref{eq:smooth-head-identity} give
\begin{align}
-\Delta\rho_\varepsilon(H)
+\diver\bigl(U\rho_\varepsilon(H)\bigr)
&=
\rho_\varepsilon'(H)
\bigl[-\Delta H+\diver(UH)\bigr]
-\rho_\varepsilon''(H)|\nabla H|^2
\notag\\
&=
-\rho_\varepsilon'(H)|\Omega|^2
-\rho_\varepsilon''(H)|\nabla H|^2
\notag\\
&\leq
-\rho_\varepsilon'(H)|\Omega|^2.
\label{eq:positive-part-approximate-ineq}
\end{align}

Let $0\leq\phi\in C_c^\infty(G)$.  By \eqref{eq:positive-part-approximation},
\[
\rho_\varepsilon(H)\longrightarrow z
\quad\text{uniformly on }\supp\phi,
\]
and hence
\[
U\rho_\varepsilon(H)\longrightarrow Uz
\quad\text{in }L^1(\supp\phi).
\]
Moreover,
\[
0\leq\rho_\varepsilon'(H)\leq1,
\qquad
\rho_\varepsilon'(H)\longrightarrow\1_{\{H>0\}},
\]
so dominated convergence gives
\[
\rho_\varepsilon'(H)|\Omega|^2
\longrightarrow
|\Omega|^2\1_{\{H>0\}}
\quad\text{in }L^1(\supp\phi).
\]
Passing to the limit $\varepsilon\downarrow0$ in \eqref{eq:positive-part-approximate-ineq} proves \eqref{eq:smooth-Kato-head}.

Since $H$ is smooth, $z=H_+$ belongs to $W^{1,\infty}_{\rm loc}(G)$ and
\begin{equation}
\nabla z=\1_{\{H>0\}}\nabla H
\quad\text{a.e. in }G.
\label{eq:gradient-positive-part}
\end{equation}
Moreover, the left-hand side of \eqref{eq:smooth-Kato-head} is a nonpositive distribution.  Hence there exists a nonnegative Radon measure $\mu$ such that
\begin{equation}
-\Delta z+\diver(Uz)+g=-\mu,
\qquad
g:=|\Omega|^2\1_{\{H>0\}}.
\label{eq:smooth-z-defect}
\end{equation}

\medskip
\noindent
\textbf{Step 2. The square-root approximation and the local $H^1$ bound.}
For $\delta>0$, define
\[
\Theta_\delta(s)
:=
\sqrt{s+\delta}-\sqrt{\delta},
\qquad s\geq0.
\]
Then
\[
\Theta_\delta'(s)=\frac1{2\sqrt{s+\delta}},
\qquad
0\leq\Theta_\delta(s)\leq\sqrt{s}.
\]
Let
\[
B_\delta(s):=\int_0^s\Theta_\delta(t)\,dt
=
\frac23\bigl((s+\delta)^{3/2}-\delta^{3/2}\bigr)-s\sqrt{\delta}.
\]
Then
\begin{equation}
0\leq B_\delta(s)\leq\frac23s^{3/2},
\qquad
B_\delta(s)\longrightarrow\frac23s^{3/2}
\quad\text{as }\delta\downarrow0.
\label{eq:Bdelta-properties}
\end{equation}

Fix $G'\Subset G$ and let $\eta\in C_c^\infty(G')$ be nonnegative.  Since $z\in W^{1,\infty}_{\rm loc}(G)$ and $\Theta_\delta$ is Lipschitz on bounded sets,
\[
\psi_\delta:=\eta^2\Theta_\delta(z)
\]
is a nonnegative compactly supported Lipschitz function.  The test $\psi_\delta$ is admissible in \eqref{eq:smooth-z-defect}: indeed, after extending it by zero outside a compact subset of $G'$, one may mollify it to obtain nonnegative $\psi_{\delta,m}\in C_c^\infty(G')$ such that
\[
\psi_{\delta,m}\longrightarrow\psi_\delta
\quad\text{uniformly and in }W^{1,2}(G').
\]
Since $\nabla z$, $Uz$, and $g$ are locally bounded and $\mu$ is locally finite, all terms in \eqref{eq:smooth-z-defect} pass to the limit $m\to\infty$; hence $\psi_\delta$ is a legitimate test.

Testing \eqref{eq:smooth-z-defect} by $\eta^2\Theta_\delta(z)$ gives
\begin{align}
&\int\eta^2\Theta_\delta'(z)|\nabla z|^2\,dx
+\int g\eta^2\Theta_\delta(z)\,dx
+\int\eta^2\Theta_\delta(z)\,d\mu
\notag\\
&\qquad=
\int B_\delta(z)\Delta(\eta^2)\,dx
+\int B_\delta(z)U\cdot\nabla(\eta^2)\,dx.
\label{eq:square-root-test}
\end{align}
Indeed, the Laplace cross term satisfies
\[
\int\Theta_\delta(z)\nabla z\cdot\nabla(\eta^2)\,dx
=
-\int B_\delta(z)\Delta(\eta^2)\,dx.
\]
For the drift term, define
\[
A_\delta(s):=s\Theta_\delta(s)-B_\delta(s),
\]
so that $A_\delta'(s)=s\Theta_\delta'(s)$.  Using $\diver U=0$,
\begin{align*}
-\int Uz\cdot\nabla\bigl(\eta^2\Theta_\delta(z)\bigr)\,dx
&=
-\int z\Theta_\delta(z)U\cdot\nabla(\eta^2)\,dx
-\int\eta^2U\cdot\nabla A_\delta(z)\,dx\\
&=
\int\bigl[A_\delta(z)-z\Theta_\delta(z)\bigr]U\cdot\nabla(\eta^2)\,dx\\
&=
-\int B_\delta(z)U\cdot\nabla(\eta^2)\,dx,
\end{align*}
which yields \eqref{eq:square-root-test} after moving the cross terms to the right-hand side.

The measure term and the $g$-term in \eqref{eq:square-root-test} are nonnegative.  Hence, using \eqref{eq:Bdelta-properties},
\begin{equation}
\int\eta^2\frac{|\nabla z|^2}{2\sqrt{z+\delta}}\,dx
\leq
C\int z^{3/2}
\bigl(
|\nabla\eta|^2+|\eta\Delta\eta|+|U|\eta|\nabla\eta|
\bigr)\,dx.
\label{eq:Wdelta-bound}
\end{equation}

Define
\[
W_\delta:=(z+\delta)^{3/4}-\delta^{3/4}.
\]
Then
\[
\nabla W_\delta
=
\frac34(z+\delta)^{-1/4}\nabla z,
\]
and therefore
\begin{equation}
\frac12(z+\delta)^{-1/2}|\nabla z|^2
=
\frac89|\nabla W_\delta|^2.
\label{eq:Wdelta-gradient}
\end{equation}
Moreover,
\[
0\leq W_\delta\leq z^{3/4}=W,
\qquad
W_\delta\longrightarrow W
\quad\text{pointwise as }\delta\downarrow0.
\]
Since $W^2=z^{3/2}\in L^1_{\rm loc}(G)$, dominated convergence gives
\[
W_\delta\longrightarrow W
\quad\text{strongly in }L^2_{\rm loc}(G).
\]
On the other hand, \eqref{eq:Wdelta-bound}--\eqref{eq:Wdelta-gradient} give a uniform local $L^2$ bound for $\nabla W_\delta$.  Weak compactness in $H^1$ and the strong $L^2$ convergence therefore imply
\[
W\in H^1_{\rm loc}(G).
\]

\medskip
\noindent
\textbf{Step 3. Passage to the renormalized flux inequality.}
Let $0\leq\phi\in C_c^\infty(G)$.  As above, $\phi\Theta_\delta(z)$ is an admissible nonnegative test in \eqref{eq:smooth-z-defect}.  Repeating the preceding calculation gives
\begin{align}
&\int\phi\Theta_\delta'(z)|\nabla z|^2\,dx
+\int g\phi\Theta_\delta(z)\,dx
+\int\phi\Theta_\delta(z)\,d\mu
\notag\\
&\qquad=
\int B_\delta(z)\Delta\phi\,dx
+\int B_\delta(z)U\cdot\nabla\phi\,dx.
\label{eq:flux-delta}
\end{align}

Discard the nonnegative measure term and let $\delta\downarrow0$.  By the weak lower semicontinuity of the $H^1$ seminorm,
\[
\frac89\int\phi|\nabla W|^2\,dx
\leq
\liminf_{\delta\downarrow0}
\int\phi\Theta_\delta'(z)|\nabla z|^2\,dx.
\]
Since $0\leq\Theta_\delta(z)\leq z^{1/2}$ and $\Theta_\delta(z)\to z^{1/2}$, dominated convergence gives
\[
\int g\phi\Theta_\delta(z)\,dx
\longrightarrow
\int gz^{1/2}\phi\,dx.
\]
Likewise, \eqref{eq:Bdelta-properties} and dominated convergence give
\[
\int B_\delta(z)\bigl(\Delta\phi+U\cdot\nabla\phi\bigr)\,dx
\longrightarrow
\frac23\int z^{3/2}\bigl(\Delta\phi+U\cdot\nabla\phi\bigr)\,dx.
\]
Hence
\begin{equation}
\frac89\int\phi|\nabla W|^2\,dx
+\int gz^{1/2}\phi\,dx
\leq
\frac23\int z^{3/2}\Delta\phi\,dx
+\frac23\int Uz^{3/2}\cdot\nabla\phi\,dx.
\label{eq:flux-limit}
\end{equation}

Since $W^2=z^{3/2}$ and $W\in H^1_{\rm loc}(G)$,
\[
\frac23\int z^{3/2}\Delta\phi\,dx
=
-\frac43\int W\nabla W\cdot\nabla\phi\,dx.
\]
Therefore the right-hand side of \eqref{eq:flux-limit} equals
\[
-\int
\left(
\frac43W\nabla W-\frac23UW^2
\right)\cdot\nabla\phi\,dx.
\]
Since
\[
gz^{1/2}
=
|\Omega|^2\1_{\{H>0\}}H_+^{1/2}
=
|\Omega|^2H_+^{1/2},
\]
we conclude that
\[
-\int
\left(
\frac43W\nabla W-\frac23UW^2
\right)\cdot\nabla\phi\,dx
\geq
\frac89\int\phi|\nabla W|^2\,dx
+
\int\phi|\Omega|^2H_+^{1/2}\,dx.
\]
This is precisely \eqref{eq:smooth-RHP}.  Hence $(U,P)$ satisfies $\mathrm{(RHP)}$ in $G$.
\end{proof}

\begin{Rem}
\label{rem:weak-RHP-gap}
For a general suitable weak solution, the pressure equation and the local energy
inequality imply
\begin{equation}
-\Delta H+\operatorname{div}(UH)+|\Omega|^2=-\mu
\quad\text{in }\mathcal D',
\qquad \mu\geq0,
\label{eq:weak-head-defect}
\end{equation}
where \(H=|U|^2/2+P\) and \(\mu\) is a nonnegative Radon measure.
Nevertheless, \eqref{eq:weak-head-defect} does not automatically imply RHP.
Indeed,
$$
 U\in L^{10/3}_{\rm loc},
 \qquad H\in L^{3/2}_{\rm loc},
 \qquad UH\in L^{30/29}_{\rm loc},
$$
so \(\operatorname{div}(UH)\) need not be a Radon measure.  Consequently,
\(\Delta H\) is not known to be a measure, and the usual Kato positive-part
inequality cannot be applied directly to \(H\).  A mollification argument also
produces an uncontrolled commutator involving
$$
 (UH)_\varepsilon-U_\varepsilon H_\varepsilon.
$$
Thus suitability and Morrey control alone are not claimed to imply RHP.  In this
paper, Proposition~\ref{prop:smooth-RHP} supplies RHP for smooth punctured
solutions, while the reduced RHP inequality for tangent solutions is obtained by
passing to the limit from smooth rescalings.
\end{Rem}

\subsection{Vanishing of the positive head pressure: proof of Proposition \ref{Thm:RHP-sign}}

We now work with a solution $(U,P)$ which is suitable on every compact subset of
$\R^5\setminus\{0\}$, has property $\mathrm{(RHP)}$ there, and satisfies
\begin{equation}
 \sup_{R>0}R^{-2}\int_{B_R}|U|^3\,dx\leq M,
 \qquad
 \sup_{R>0}R^{-2}\int_{B_R}|P|^{3/2}\,dx\leq K.
 \label{eq:globalMorrey}
\end{equation}
It follows that
\begin{equation}
 \sup_{R>0}R^{-2}\int_{B_R}|H|^{3/2}\,dx\leq K_1(M,K).
 \label{eq:HMorrey}
\end{equation}

Let $z=H_+$ and $W=z^{3/4}$.  Testing \eqref{eq:RHP} against a nonnegative cutoff
$\eta^2$ compactly supported away from the origin gives
\begin{align*}
 \frac89\int\eta^2|\nabla W|^2\,dx
 &\leq-\int
 \left(\frac43W\nabla W-\frac23UW^2\right)
 \cdot\nabla(\eta^2)\,dx\\
 &=-\frac83\int\eta W\nabla W\cdot\nabla\eta\,dx
 +\frac43\int\eta W^2U\cdot\nabla\eta\,dx.
\end{align*}
Young's inequality absorbs $4/9$ of the gradient term and yields
\begin{align}
 \frac49\int\eta^2|\nabla W|^2\,dx
 \leq{}&C\int W^2|\nabla\eta|^2\,dx\notag\\
 &+C\int|U|W^2\eta|\nabla\eta|\,dx.
 \label{eq:RHP-Caccioppoli}
\end{align}
This is the only consequence of the Kato--Stampacchia calculation needed below.

\begin{Lem}[Uniform annular head-pressure estimate]
\label{lem:annular-W}
Let
\[
 A_R=B_{2R}\setminus B_{R/2},
 \qquad
 A_R^*=B_{4R}\setminus B_{R/4}.
\]
Then
\begin{equation}
 \int_{A_R}|\nabla W|^2\,dx
 \leq C_{M,K}
 \label{eq:annular-W}
\end{equation}
for every $R>0$.
\end{Lem}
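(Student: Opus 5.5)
The plan is to exploit the scale invariance of the quantity in \eqref{eq:annular-W}, reduce to the unit scale $R=1$, and then close the Caccioppoli inequality \eqref{eq:RHP-Caccioppoli} by a Gagliardo--Nirenberg absorption combined with a hole-filling (Giaquinta-type) iteration over nested annuli between $A_1$ and $A_1^*$.

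\emph{Step 1 (scaling).} Under \eqref{eq:scaling}, $U^{(R)}(x)=RU(Rx)$ and $P^{(R)}(x)=R^2P(Rx)$. Hence $H^{(R)}(x)=R^2H(Rx)$ and $W^{(R)}(x)=R^{3/2}W(Rx)$. A change of variables gives
\[
\int_{A_1}|\nabla W^{(R)}|^2\,dx=R^{5}\int_{A_1}|\nabla W|^2(Rx)\,dx=\int_{A_R}|\nabla W|^2\,dy .
\]
The Morrey bounds \eqref{eq:globalMorrey} and \eqref{eq:HMorrey} are invariant, and RHP is preserved away from the origin. It therefore suffices to treat $R=1$ with constants depending only on $M,K$. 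In particular
\[
\int_{A_1^*}|U|^3\le CM,\qquad \int_{A_1^*}W^2=\int_{A_1^*}H_+^{3/2}\le CK_1 .
\]

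\emph{Step 2 (absorption on nested annuli).} For radii $1/4\le \rho_1<\rho_2\le 1/2$ and $2\le\sigma_1<\sigma_2\le4$, set $S=B_{\sigma_1}\setminus B_{\rho_2}$ and $T=B_{\sigma_2}\setminus B_{\rho_1}$. Take $\eta=1$ on $S$, supported in $T$, with $|\nabla\eta|\le C/d$, where $d:=\min(\rho_2-\rho_1,\sigma_2-\sigma_1)$. Then \eqref{eq:RHP-Caccioppoli} gives
\[
\int_S|\nabla W|^2\le Cd^{-2}K_1+Cd^{-1}\|U\|_{L^3(T)}\|W\|_{L^3(T)}^2 .
\]
The critical term is handled by the five-dimensional Gagliardo--Nirenberg inequality on the fixed Lipschitz annulus $T$ (uniform constants, since the radii range over a compact family), with $\theta=5/6$:
\[
\|W\|_{L^3(T)}^2\le C\|W\|_{L^2(T)}^{1/3}\|\nabla W\|_{L^2(T)}^{5/3}+C\|W\|_{L^2(T)}^2 .
\]
Because $5/3<2$, Young's inequality yields
\[
\int_S|\nabla W|^2\le \tfrac12\int_T|\nabla W|^2+C_{M,K}\bigl(d^{-2}+d^{-12}\bigr).
\]
Here the $d^{-12}$ comes from $(d^{-1}M^{1/3}K_1^{1/6})^{6}$. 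All quantities are finite since $W\in H^1_{\rm loc}(\R^5\setminus\{0\})$ by RHP.

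\emph{Step 3 (hole filling).} Let $\phi(t)$ denote $\int|\nabla W|^2$ over the annulus $B_{2+2t}\setminus B_{1/2-t/4}$, for $t\in[0,1]$. Step 2 gives
\[
\phi(s)\le \tfrac12\phi(t)+C(t-s)^{-12}\qquad (0\le s<t\le1).
\]
The standard iteration lemma then gives $\phi(0)\le C_{M,K}$, which is \eqref{eq:annular-W} at $R=1$, and hence for all $R$ by Step 1.

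The main obstacle is the critical drift term $\int|U|W^2\eta|\nabla\eta|$. It has no small factor under a mere boundedness assumption, and it lives on a larger annulus than the left-hand side. This is precisely why the strictly subquadratic Gagliardo--Nirenberg exponent available in dimension five, together with the hole-filling lemma, is needed: it lets us absorb into a larger annulus rather than the same one. I expect only care in keeping the GN constants uniform over the family of annuli.
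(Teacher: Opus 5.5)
Your proposal is correct and follows essentially the same route as the paper: the RHP Caccioppoli inequality \eqref{eq:RHP-Caccioppoli}, H\"older with the cubic Morrey bound on $U$, the five-dimensional Gagliardo--Nirenberg inequality with exponent $5/6$, Young with exponents $6/5$ and $6$, and hole-filling over nested annuli between $A_R$ and $A_R^*$. The differences are cosmetic: you rescale to $R=1$ and iterate with factor $\tfrac12$ via the general Giaquinta iteration lemma, whereas the paper keeps $R$ explicit and takes $\theta<2^{-6}$ with $t_j=1-2^{-j}$. Also, your Young step actually produces $d^{-6}$ rather than $d^{-12}$, which is harmless since $d\le 1$.
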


\begin{proof}
For $0\leq t\leq1$, put
\[
 \mathcal A(t):=B_{(2+2t)R}\setminus
 B_{R/(2+2t)}.
\]
Thus $\mathcal A(0)=A_R$ and $\mathcal A(1)=A_R^*$.  If $0\leq s<t\leq1$, choose
$\eta$ which is one on $\mathcal A(s)$, supported in $\mathcal A(t)$, and satisfies
\[
 |\nabla\eta|\leq \frac{C}{R(t-s)},
 \qquad
 |\Delta\eta|\leq \frac{C}{R^2(t-s)^2}.
\]
Write
\[
 \Phi(t):=\int_{\mathcal A(t)}|\nabla W|^2\,dx,
 \qquad
 L:=\int_{A_R^*}W^2\,dx.
\]
Estimate \eqref{eq:RHP-Caccioppoli} then gives
\begin{equation}
 \Phi(s)\leq
 \frac{C}{R^2(t-s)^2}L
+\frac{C}{R(t-s)}\int_{\mathcal A(t)}|U|W^2\,dx.
 \label{eq:Phi-pre}
\end{equation}

On every $\mathcal A(t)$, H\"older and the five-dimensional
Gagliardo--Nirenberg inequality give
\begin{align}
 \int_{\mathcal A(t)}|U|W^2\,dx
 &\leq \|U\|_{L^3(\mathcal A(t))}
 \|W\|_{L^3(\mathcal A(t))}^2,
 \label{eq:drift1}\\
 \|W\|_{L^3(\mathcal A(t))}^2
 &\leq C\|W\|_{L^2(\mathcal A(t))}^{1/3}
 \left(\|\nabla W\|_{L^2(\mathcal A(t))}
 +R^{-1}\|W\|_{L^2(\mathcal A(t))}\right)^{5/3}.
 \label{eq:GN-W}
\end{align}
The constants are uniform in $t$, because these annuli have uniformly Lipschitz
rescalings, and the Morrey bound gives
\[
 \|U\|_{L^3(\mathcal A(t))}\leq C M^{1/3}R^{2/3}.
\]
Young's inequality with conjugate exponents $6/5$ and $6$ therefore gives, for every
$0<\theta<1$,
\begin{equation}
 \frac{C}{R(t-s)}\int_{\mathcal A(t)}|U|W^2\,dx
 \leq \theta\Phi(t)
 +\frac{C_{\theta,M}}{R^2(t-s)^6}L.
 \label{eq:drift-absorb}
\end{equation}
Inserting this into \eqref{eq:Phi-pre} and enlarging the constant yields
\begin{equation}
 \Phi(s)\leq\theta\Phi(t)
 +\frac{C_{\theta,M}}{R^2(t-s)^6}L.
 \label{eq:hole-recursion}
\end{equation}
For completeness, take $t_j=1-2^{-j}$ and apply
\eqref{eq:hole-recursion} with $(s,t)=(t_j,t_{j+1})$.  Iteration gives
\[
 \Phi(0)\leq \theta^N\Phi(t_N)
 +\frac{C_{\theta,M}L}{R^2}
 \sum_{j=0}^{N-1}\theta^j2^{6(j+1)}.
\]
Choose $\theta<2^{-6}$.  Since $\Phi(t_N)\leq\Phi(1)<\infty$, letting
$N\to\infty$ proves
\[
 \int_{A_R}|\nabla W|^2
 \leq C_MR^{-2}\int_{A_R^*}W^2.
\]
Finally $W^2=H_+^{3/2}$, so \eqref{eq:HMorrey} proves
\eqref{eq:annular-W}.
\end{proof}

Define the renormalized flux
\begin{equation}
 \cF:=H_+^{1/2}\nabla H_+-\frac23UH_+^{3/2}
 =\frac43W\nabla W-\frac23UW^2.
 \label{eq:F}
\end{equation}
Property $\mathrm{(RHP)}$ gives
\begin{equation}
 \diver\cF\geq\frac89|\nabla W|^2
 \quad\text{in }\cD'(\R^5\setminus\{0\}).
 \label{eq:divF}
\end{equation}

\begin{proof}[Proof of Proposition \ref{Thm:RHP-sign}]
First, $\cF\in L^1_{\rm loc}$.  Indeed, using \eqref{eq:HMorrey},
Lemma \ref{lem:annular-W}, and \eqref{eq:GN-W},
\begin{align*}
 \int_{A_R}W|\nabla W|\,dx&\leq C_{M,K}R,\\
 \int_{A_R}|U|W^2\,dx&\leq C_{M,K}R.
\end{align*}
Thus
\begin{equation}
 \int_{A_R}|\cF|\,dx\leq C_{M,K}R.
 \label{eq:Fannular}
\end{equation}

For almost every $R>0$, the coarea formula defines
\begin{equation}
 \Xi_1(R):=\int_{\partial B_R}\cF\cdot e_r\,dS.
 \label{eq:g2}
\end{equation}
No pointwise normal trace is needed: for a radial test $\psi(|x|)$,
\[
 -\int\cF\cdot\nabla\psi\,dx
 =-\int_0^\infty\psi'(R)\Xi_1(R)\,dR.
\]
Consequently, \eqref{eq:divF} says that $\Xi_1$ has a nondecreasing representative and,
for almost every $0<R_1<R_2$,
\begin{equation}
 \Xi_1(R_2)-\Xi_1(R_1)
 \geq\frac89\int_{B_{R_2}\setminus B_{R_1}}|\nabla W|^2\,dx.
 \label{eq:g2mono}
\end{equation}
By \eqref{eq:Fannular},
\[
 \frac1R\int_R^{2R}|\Xi_1(s)|\,ds\leq C_{M,K}.
\]
Hence $\Xi_1$ has bounded subsequences at both zero and infinity.  Its monotonicity then
implies that both endpoint limits are finite.  Letting the endpoints tend to zero and
infinity in \eqref{eq:g2mono} yields
\begin{equation}
 \int_{\R^5}|\nabla W|^2\,dx<\infty.
 \label{eq:global-W-energy}
\end{equation}

Since $W\in L^2_{\rm loc}(\R^5)$ by \eqref{eq:HMorrey}, and a point has zero
$H^1$-capacity in dimension five, \eqref{eq:global-W-energy} extends $W$ across the
origin as an element of $H^1_{\rm loc}(\R^5)$.  The homogeneous Sobolev inequality
therefore gives a constant $c$ such that
$W-c\in L^{10/3}(\R^5)$.  On the other hand, \eqref{eq:HMorrey} gives
\[
 |(W)_{B_R}|\leq
 \left( \frac{1}{|B_R|}\int_{B_R}W^2\,dx\right)^{1/2}
 \leq C_{M,K}R^{-3/2}\to0,
\]
so $c=0$ and
\begin{equation}
 W\in L^{10/3}(\R^5).
 \label{eq:W103}
\end{equation}

We now improve the averaged flux bound.  H\"older's inequality gives
\begin{align*}
 \frac1R\int_{A_R}W|\nabla W|\,dx
 &\leq
 C\|W\|_{L^{10/3}(A_R)}\|\nabla W\|_{L^2(A_R)},\\
 \frac1R\int_{A_R}|U|W^2\,dx
 &\leq
 CR^{-1}\|U\|_{L^{5/2}(A_R)}
 \|W\|_{L^{10/3}(A_R)}^2\\
 &\leq CM^{1/3}\|W\|_{L^{10/3}(A_R)}^2.
\end{align*}
Therefore
\begin{equation}
 \frac1R\int_R^{2R}|\Xi_1(s)|\,ds
 \leq
 C\|W\|_{L^{10/3}(A_R)}\|\nabla W\|_{L^2(A_R)}
 +CM^{1/3}\|W\|_{L^{10/3}(A_R)}^2.
 \label{eq:g2vanish-average}
\end{equation}
The right side tends to zero as $R\downarrow0$ and as $R\uparrow\infty$, by
\eqref{eq:global-W-energy}--\eqref{eq:W103}.  Thus there are sequences at both ends
along which $\Xi_1$ tends to zero.  Since $\Xi_1$ is nondecreasing, both endpoint limits are
zero and $\Xi_1\equiv0$.  Equation \eqref{eq:g2mono} gives $\nabla W=0$.  Finally,
\eqref{eq:W103} forces $W=0$.  Hence
\begin{equation}
 H\leq0\quad\text{a.e. in }\R^5,
 \label{eq:Hnegative}
\end{equation}
which proves Proposition \ref{Thm:RHP-sign}.
\end{proof}


\section{The weak sign-rigidity: proof of Proposition \ref{thm:sign-rigidity}}

In this section the sign information $H\leq0$ is converted into full rigidity.  A
radial distributional test yields the exact monotonicity formula for $\Xi_2$; an
independent zero-defect argument classifies every possible tangent with vanishing
weighted defect.  Finally, blow-up at the origin and blow-down at infinity identify
both endpoint fluxes as zero.

\subsection{The radial monotonicity identity}

Write $U_r=U\cdot e_r$, where $e_r=x/|x|$.  For almost every $R>0$, define
\begin{equation}
 \Xi_2(R):=R^{-2}\int_{\partial B_R}(U_r^2+P)\,dS.
 \label{eq:g3}
\end{equation}

\begin{Lem}
\label{lem:g3}
Let $(U,P)$ be a global suitable weak solution to \eqref{eq:NS} satisfying \eqref{eq:globalMorrey}.
Then, for almost every $0<R_1<R_2$,
\begin{equation}
 \Xi_2(R_1)-\Xi_2(R_2)
 =\int_{B_{R_2}\setminus B_{R_1}}
 \frac{3U_r^2-2H}{|x|^3}\,dx.
 \label{eq:g3identity}
\end{equation}
In particular, if $H\leq0$, then $\Xi_2$ is nonincreasing and
\begin{equation}
 \int_{\R^5}\frac{3U_r^2-2H}{|x|^3}\,dx<\infty.
 \label{eq:weighted-defect}
\end{equation}
\end{Lem}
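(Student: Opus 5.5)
The plan is to test the momentum equation in \eqref{eq:NS} against a radial vector field of the form $\Phi(x)=x\,g(|x|)$ with $g(r)=r^{-3}\psi(r)$, where $\psi\in C_c^\infty((0,\infty))$. Because $\Phi$ is supported away from the origin and is smooth, it is an admissible test in the distributional formulation
\[
 \int\nabla U:\nabla\Phi\,dx-\int U\otimes U:\nabla\Phi\,dx-\int P\,\diver\Phi\,dx=0 .
\]
Every integrand is in $L^1$ on the support, by $U\in H^1_{\rm loc}\cap L^3_{\rm loc}$ and $P\in L^{3/2}_{\rm loc}$.

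The first step is the computation. Write $r=|x|$. Then
\[
 \partial_j\Phi_i=g\,\delta_{ij}+g'(r)\,\frac{x_ix_j}{r},
 \qquad
 \diver\Phi=5g+rg',
 \qquad
 U\otimes U:\nabla\Phi=g|U|^2+rg'\,U_r^2 .
\]
For the viscous term, integrate by parts and use $\Delta(x_ig)=x_i\bigl(g''+6g'/r\bigr)$. This gives
\[
 \int\nabla U:\nabla\Phi\,dx=-\int U_r\,r\,f(r)\,dx
\]
for some radial function $f$ compactly supported in $(0,\infty)$.

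This viscous term vanishes. By the coarea formula, $\int U_r\,r f(r)\,dx=\int_0^\infty r f(r)\bigl(\int_{\partial B_r}U_r\,dS\bigr)dr$. The inner flux is zero for a.e.\ $r$: testing $\diver U=0$ with a radial function $\chi(|x|)$ gives $\int_0^\infty\chi'(r)\int_{\partial B_r}U_r\,dS\,dr=0$ for every such $\chi$. So $r\mapsto\int_{\partial B_r}U_r\,dS$ is a.e.\ constant, and the constant is zero because its dyadic averages tend to $0$ as $r\downarrow0$ (Morrey bound).

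Substituting $g=r^{-3}\psi$, we have
\[
 rg'=-3r^{-3}\psi+r^{-2}\psi',
 \qquad
 5g+rg'=2r^{-3}\psi+r^{-2}\psi' .
\]
Using $|U|^2+2P=2H$, the identity becomes
\[
 \int\psi'(|x|)\,|x|^{-2}(U_r^2+P)\,dx=\int\psi(|x|)\,\frac{3U_r^2-2H}{|x|^3}\,dx .
\]
By coarea, the left side equals $\int_0^\infty\psi'(R)\,\Xi_2(R)\,dR$.

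The second step removes the cutoff. Let $\psi$ approximate $\1_{[R_1,R_2]}$, with linear ramps of width $\epsilon$. The left side then tends to $\Xi_2(R_1)-\Xi_2(R_2)$ whenever $R_1,R_2$ are Lebesgue points of the $L^1_{\rm loc}$ function $\Xi_2$; this holds for a.e.\ $R_1,R_2$. The right side converges by dominated convergence, since $|x|^{-3}(|U|^2+|P|)\in L^1(B_{R_2}\setminus B_{R_1/2})$. This proves \eqref{eq:g3identity}.

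The third step treats the case $H\le0$. The integrand $3U_r^2-2H$ is then nonnegative, so \eqref{eq:g3identity} shows that $\Xi_2$ agrees a.e.\ with a nonincreasing function. To get \eqref{eq:weighted-defect} we need $\Xi_2$ bounded above near $0$ and below near $\infty$. From \eqref{eq:globalMorrey} and Hölder,
\[
 \frac1R\int_R^{2R}|\Xi_2(s)|\,ds\le CR^{-3}\int_{B_{2R}\setminus B_R}(|U|^2+|P|)\,dx\le C(M,K).
\]
Hence every dyadic interval contains points where $|\Xi_2|\le C$. Monotonicity then gives $\Xi_2\le C$ on $(0,1)$ and $\Xi_2\ge-C$ on $(1,\infty)$. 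Letting $R_1\downarrow0$ and $R_2\uparrow\infty$ in \eqref{eq:g3identity} and using monotone convergence yields \eqref{eq:weighted-defect}.

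The main obstacle I expect is the vanishing of the viscous term. It requires the sphere flux $\int_{\partial B_r}U_r\,dS$ to vanish for a.e.\ $r$, not merely to be constant. This is exactly where the puncture matters: the constant is excluded only through the averaged Morrey control near the origin, via the dyadic average argument above.
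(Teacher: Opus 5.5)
Your proof is correct and follows essentially the same route as the paper. You test the momentum equation with the radial field $r^{-2}\psi(r)e_r$, where the paper mollifies $-r^{-2}\1_{(R_1,R_2)}$ instead of using ramps, then reduce by coarea to a one-dimensional identity, pass to the limit at Lebesgue points of $\Xi_2$, and obtain the defect bound from the dyadic average estimate plus monotonicity. One simplification is available: your test field is $\Phi=\nabla F(|x|)$ with $F(r)=-\int_r^\infty s^{-2}\psi(s)\,ds$ and $F(|x|)\in C_c^\infty(\R^5)$, so the viscous term vanishes at once via $\langle-\Delta U,\nabla F\rangle=\langle\diver U,\Delta F\rangle=0$ (as in the paper), and the sphere-flux argument you flagged as the main obstacle is not needed.
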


\begin{proof}
We give the approximation argument in detail, because the values of a weak solution
on a prescribed sphere need not be defined.  Since
$U\otimes U,P\in L^1_{\rm loc}(\R^5)$, the coarea formula shows that the functions
\begin{align*}
 G(r)&:=\int_{\partial B_r}(U_r^2+P)\,dS,\\
 J(r)&:=\int_{\partial B_r}(|U|^2-U_r^2+4P)\,dS
\end{align*}
belong to $L^1_{\rm loc}(0,\infty)$.  In what follows, $R_1<R_2$ are chosen to be
Lebesgue points of $G$.  This excludes only a null set of radii and is precisely the
meaning of the surface integrals occurring in \eqref{eq:g3}.

Let $\rho\in C_c^\infty((-1,1))$ be nonnegative, even, and satisfy
$\int_{\R}\rho=1$, and put $\rho_\varepsilon(r)=\varepsilon^{-1}
\rho(r/\varepsilon)$.  Extend the function
\begin{equation}
 q(r):=-r^{-2}\mathbf 1_{(R_1,R_2)}(r)
 \label{eq:q-radial}
\end{equation}
by zero to the whole real line, and set
\[
 q_\varepsilon=\rho_\varepsilon*q,
 \qquad
 f_\varepsilon(r):=-\int_r^\infty q_\varepsilon(s)\,ds,
 \qquad
 \psi_\varepsilon(x):=f_\varepsilon(|x|).
\]
For $0<\varepsilon<R_1/2$, the function $f_\varepsilon$ is constant near zero and
vanishes for $r\geq R_2+\varepsilon$.  Hence
$\psi_\varepsilon\in C_c^\infty(\R^5)$, and
\[
 f_\varepsilon'=q_\varepsilon,
 \qquad f_\varepsilon''=q_\varepsilon'.
\]
The limiting function differs by an irrelevant additive constant from the continuous
radial function that equals $R_1^{-1}$ in $B_{R_1}$, $|x|^{-1}$ in the annulus, and
$R_2^{-1}$ outside $B_{R_2}$.

We may test the distributional momentum equation with
$\nabla\psi_\varepsilon$.  The viscous term is exactly zero, because
\begin{equation}
 \langle-\Delta U,\nabla\psi_\varepsilon\rangle
 =\langle\diver U,\Delta\psi_\varepsilon\rangle=0.
 \label{eq:viscous-gradient-zero}
\end{equation}
After multiplying the remaining equality by $-1$, we obtain
\begin{equation}
 \int_{\R^5}\left(
 U_iU_j\partial_{ij}\psi_\varepsilon
 +P\Delta\psi_\varepsilon\right)\,dx=0.
 \label{eq:radial-test-stress}
\end{equation}
For an arbitrary smooth radial function $f=f(r)$ in five dimensions,
\begin{align}
 \partial_{ij}f
 &=f''e_{r,i}e_{r,j}
   +\frac{f'}r(\delta_{ij}-e_{r,i}e_{r,j}),
 \notag\\
 \Delta f&=f''+4\frac{f'}r.
 \label{eq:radial-hessian-general}
\end{align}
Consequently, the coarea formula turns \eqref{eq:radial-test-stress} into the
one-dimensional identity
\begin{equation}
 0=\int_0^\infty G(r)q_\varepsilon'(r)\,dr
   +\int_0^\infty J(r)\frac{q_\varepsilon(r)}r\,dr.
 \label{eq:radial-test-one-dimensional}
\end{equation}

We now compute the limit of both terms.  The distributional derivative of
\eqref{eq:q-radial} is
\begin{equation}
 q'
 =2r^{-3}\mathbf 1_{(R_1,R_2)}(r)
  -R_1^{-2}\delta_{R_1}
  +R_2^{-2}\delta_{R_2}.
 \label{eq:q-prime-atoms}
\end{equation}
Indeed, the classical derivative in $(R_1,R_2)$ is $2r^{-3}$, while the jumps
$q(R_1+)-q(R_1-)=-R_1^{-2}$ and
$q(R_2+)-q(R_2-)=R_2^{-2}$ give the two atoms.  Extend $G$ by zero to the negative
axis.  Since $\rho$ is even,
\[
 \int G(r)(\rho_\varepsilon*q')(r)\,dr
 =\bigl\langle q',\rho_\varepsilon*G\bigr\rangle.
\]
The approximate-identity theorem gives
$\rho_\varepsilon*G\to G$ in $L^1$ on compact subintervals of $(0,\infty)$; because
$r^{-3}$ is bounded near $[R_1,R_2]$, this treats the absolutely continuous part of
$q'$.  At the atoms, the Lebesgue-point property gives
$(\rho_\varepsilon*G)(R_i)\to G(R_i)$.  Therefore
\begin{align}
 \lim_{\varepsilon\downarrow0}
 \int_0^\infty G(r)q_\varepsilon'(r)\,dr
 ={}&2\int_{R_1}^{R_2}\frac{G(r)}{r^3}\,dr
   -R_1^{-2}G(R_1)+R_2^{-2}G(R_2).
 \label{eq:qprime-limit}
\end{align}
Here the endpoint terms can equivalently be read as the pairing of the singular
Hessian
\[
 -R_1^{-2}e_r\otimes e_r\,\mathcal H^4\lfloor\partial B_{R_1}
 +R_2^{-2}e_r\otimes e_r\,\mathcal H^4\lfloor\partial B_{R_2}
\]
and its trace with $U\otimes U+PI$.  Formula \eqref{eq:qprime-limit}, rather than a
pointwise normal trace, is the rigorous definition of that pairing for the present
weak solution.

Moreover, $q_\varepsilon\to q$ almost everywhere, the functions $q_\varepsilon$ are
uniformly bounded on a fixed compact subinterval of $(0,\infty)$, and $J$ is locally
integrable.  Therefore
\begin{equation}
 \lim_{\varepsilon\downarrow0}
 \int_0^\infty J(r)\frac{q_\varepsilon(r)}r\,dr
 =-\int_{R_1}^{R_2}\frac{J(r)}{r^3}\,dr.
 \label{eq:q-limit}
\end{equation}
Combining \eqref{eq:radial-test-one-dimensional}--\eqref{eq:q-limit} gives
\begin{align*}
 0={}&\int_{R_1}^{R_2}\frac{2G(r)-J(r)}{r^3}\,dr
      -R_1^{-2}G(R_1)+R_2^{-2}G(R_2)\\
 ={}&\int_{B_{R_2}\setminus B_{R_1}}
       \frac{3U_r^2-|U|^2-2P}{|x|^3}\,dx
      -\Xi_2(R_1)+\Xi_2(R_2)\\
 ={}&\int_{B_{R_2}\setminus B_{R_1}}
       \frac{3U_r^2-2H}{|x|^3}\,dx
      -\Xi_2(R_1)+\Xi_2(R_2).
\end{align*}
This proves \eqref{eq:g3identity}.  Notice in particular that the signs in the last
two terms come directly from the two jumps in \eqref{eq:q-prime-atoms}.  The argument
applies whenever both endpoints are Lebesgue points of $G$, and hence for almost every
pair $0<R_1<R_2$.

Assume now that $H\leq0$.  It remains to prove finiteness.  By H\"older and
\eqref{eq:globalMorrey},
\[
 \int_{A_R}|U|^2\,dx\leq C M^{2/3}R^3,
 \qquad
 \int_{A_R}|P|\,dx\leq C K^{2/3}R^3.
\]
Consequently,
\begin{equation}
 \frac1R\int_R^{2R}|\Xi_2(s)|\,ds
 \leq \frac{C}{R^3}\int_{A_R}(|U|^2+|P|)\,dx
 \leq C_{M,K}.
 \label{eq:g3average}
\end{equation}
To make the monotonicity statement precise, define
\[
 d(r):=r^{-3}\int_{\partial B_r}(3U_r^2-2H)\,dS.
\]
Then $d\in L^1_{\rm loc}(0,\infty)$, $d\geq0$, and
\eqref{eq:g3identity} says that, for almost every $R_1<R_2$,
$\Xi_2(R_1)-\Xi_2(R_2)=\int_{R_1}^{R_2}d(r)\,dr$.  Fix a Lebesgue point
$r_0$ of $G$ and define
\[
 \bar \Xi_2(r):=
 \begin{cases}
  \Xi_2(r_0)+\displaystyle\int_r^{r_0}d(s)\,ds,&0<r<r_0,\\[6pt]
  \Xi_2(r_0)-\displaystyle\int_{r_0}^{r}d(s)\,ds,&r\geq r_0.
 \end{cases}
\]
The identity already proved for every pair of Lebesgue points of $G$ shows that
$\bar \Xi_2=\Xi_2$ almost everywhere.  Thus $\bar \Xi_2$ is an absolutely continuous,
nonincreasing representative on every compact subinterval of $(0,\infty)$.

The uniform dyadic average bound \eqref{eq:g3average} supplies, for every $R>0$, a
Lebesgue point $s_R\in(R,2R)$ at which $|\bar \Xi_2(s_R)|\leq C_{M,K}$; otherwise the
average in \eqref{eq:g3average} would be larger than $C_{M,K}$.  Taking
$R\downarrow0$ and $R\uparrow\infty$ and using monotonicity shows that the endpoint limits
$\ell_0=\lim_{r\downarrow0}\bar \Xi_2(r)$ and
$\ell_\infty=\lim_{r\uparrow\infty}\bar \Xi_2(r)$ are both finite.  Finally, letting
$R_1\downarrow0$ and $R_2\uparrow\infty$ in the nonnegative identity and applying
monotone convergence gives
\begin{align}
    \int_{\R^5}\frac{3U_r^2-2H}{|x|^3}\,dx
 =\ell_0-\ell_\infty<\infty, \label{eq:weighted-defect2}
\end{align}
which is \eqref{eq:weighted-defect}.
\end{proof}

\subsection{Zero-defect tangents}

The next proposition is the second key point of the proof.

\begin{Prop}[Zero-defect tangent is trivial]
\label{prop:zero-defect}
Let $(V_k,Q_k)$ be suitable weak solutions to \eqref{eq:NS} on $\R^5\setminus\{0\}$, and write
\[
 H_k=\frac{|V_k|^2}{2}+Q_k.
\]
Assume, on every compact annulus $A\Subset\R^5\setminus\{0\}$, that
\begin{align}
 V_k&\to V &&\text{strongly in }L^3(A),\label{eq:Vkstrong}\\
 V_k&\rightharpoonup V &&\text{weakly in }H^1(A),\label{eq:VkweakH1}\\
 Q_k&\rightharpoonup Q &&\text{weakly in }L^{3/2}(A),\label{eq:Qkweak}\\
 H_k&\leq0,\label{eq:Hknegative}
\end{align}
and
\begin{equation}
 \int_A\left(3|(V_k)_r|^2-2H_k\right)\,dx\to0.
 \label{eq:defect-zero}
\end{equation}
Then $V\equiv0$ and $Q\equiv0$ on $\R^5\setminus\{0\}$.
\end{Prop}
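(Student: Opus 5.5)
Since $H_k\le0$, both terms in the integrand of \eqref{eq:defect-zero} are nonnegative. Each therefore tends to zero separately in $L^1(A)$:
\[
 \int_A|(V_k)_r|^2\,dx\to0,\qquad \int_A|H_k|\,dx\to0 .
\]
By the strong $L^3$ convergence \eqref{eq:Vkstrong}, the first gives $V_r=V\cdot e_r=0$ a.e. on every compact annulus. For the second, note that $H_k=\frac12|V_k|^2+Q_k$. The velocity part converges strongly in $L^{3/2}(A)$ and $Q_k\rightharpoonup Q$ weakly in $L^{3/2}(A)$, so $H_k\rightharpoonup \frac12|V|^2+Q$ weakly in $L^1(A)$. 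Combined with $H_k\to0$ in $L^1(A)$, this identifies
\[
 Q=-\tfrac12|V|^2\qquad\text{a.e. on }\R^5\setminus\{0\}.
\]
Thus the limit is tangential to spheres and has vanishing head pressure.

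Next I pass to the limit in the equations on $\R^5\setminus\{0\}$, which is legitimate by \eqref{eq:Vkstrong}--\eqref{eq:Qkweak}. This gives $\diver V=0$ and
\[
-\Delta V+(V\cdot\nabla)V+\nabla Q=0\quad\text{in }\cD'(\R^5\setminus\{0\}).
\]
Write the nonlinearity in Lamb form, $(V\cdot\nabla)V=\diver(V\otimes V)=\nabla\frac{|V|^2}{2}+\Omega\cdot V$, where $\Omega\cdot V$ denotes the vorticity contraction $(\Omega V)_i=(\partial_jV_i-\partial_iV_j)V_j$. Since $\nabla(\frac12|V|^2+Q)=0$, the equation reduces to
\[
 -\Delta V+\Omega V=0 .
\]
Because $V\in H^1_{\rm loc}$ and $\Omega\in L^2_{\rm loc}$, the product $\Omega V\in L^{5/4}_{\rm loc}$ (using $V\in L^{10/3}_{\rm loc}$), so everything is well defined.

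To show $V$ is curl-free, I test this equation against $V\phi$ with $\phi\ge0$ a cutoff supported in an annulus. Pointwise $\Omega V\cdot V=0$ by antisymmetry. The test is justified by mollification, or by the local energy inequality for the limit, which inherits suitability from Lemma~\ref{lem:compactness} localized to annuli. Either route gives the identity
\[
\int|\nabla V|^2\phi=\int\tfrac12|V|^2\Delta\phi .
\]
Hence $-\Delta\frac{|V|^2}{2}+|\nabla V|^2=0$ distributionally. More useful is to use the vorticity defect directly. The head-pressure identity \eqref{eq:weak-head-defect} holds for each $(V_k,Q_k)$ with defect $\mu_k\ge0$. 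Test it against $\phi\ge0$: the terms $\int H_k\Delta\phi$ and $\int V_kH_k\cdot\nabla\phi$ tend to zero, the latter by the $L^1$ convergence of $H_k$ together with uniform bounds, after truncation if necessary. Therefore $\int\phi|\Omega_k|^2\to0$, and weak lower semicontinuity gives $\Omega=0$. The bound on the drift term $\int V_kH_k\cdot\nabla\phi$ is the step I expect to be delicate: it is only controlled in $L^{30/29}$-type spaces. I would first try interpolating $\|H_k\|_{L^1}\to0$ against the uniform $L^{3/2}$ bound on $H_k$, which gives $H_k\to0$ in $L^p$ for all $p<3/2$. Pairing with $V_k$ bounded in $L^{10/3}$ needs $p\ge 10/7$, and $10/7<3/2$, so this works.

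Thus $V$ is curl-free and divergence-free on $\R^5\setminus\{0\}$, hence harmonic and smooth there. It is locally a gradient $V=\nabla\varphi$, and since $\R^5\setminus\{0\}$ is simply connected, globally so. The condition $V_r=0$ means $\partial_r\varphi=0$, so $\varphi$ depends only on the angular variable. Harmonicity of $\varphi=\varphi(\omega)$ gives $r^{-2}\Delta_{S^4}\varphi=0$, so $\varphi$ is constant on $S^4$, and therefore $V=0$. Finally $Q=-\frac12|V|^2=0$.
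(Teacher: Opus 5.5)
Your proof is correct. The outer steps match the paper's: you extract $V_r=0$ and $Q=-\tfrac12|V|^2$ from the nonnegative defect, and you finish with the harmonic-gradient argument on $\R^5\setminus\{0\}$ and $\mathbb S^4$. The curl-free step is where you take a genuinely different route.

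The paper works entirely at the level of the limit. It passes the local energy inequality to $(V,Q)$ and uses $H_V=0$ to reduce it to $\int|\nabla V|^2\phi\le\int\tfrac12|V|^2\Delta\phi$. It then uses the limiting pressure equation $-\Delta Q=\partial_iV_j\partial_jV_i$ with $Q=-\tfrac12|V|^2$ to rewrite the right side as $\int\partial_iV_j\partial_jV_i\,\phi$, which leaves $\int|\Omega|^2\phi\le0$.

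You instead combine the local energy inequality with the pressure equation for each $k$ first. This gives $\int|\Omega_k|^2\phi\le\int H_k(\Delta\phi+V_k\cdot\nabla\phi)$, and only then do you let $k\to\infty$.

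In the paper's order, the drift term $\diver(VH_V)$ vanishes identically, so nothing beyond the standard passage of suitability to the limit is needed. In your order, you must kill $\int H_kV_k\cdot\nabla\phi$ knowing only $H_k\to0$ in $L^1$. Your interpolation handles this correctly: the uniform $L^{3/2}$ bound upgrades $H_k\to0$ to $L^{10/7}$, and this pairs with the uniform $L^{10/3}$ bound on $V_k$ coming from the $H^1(A)$ bound. In return, you never need suitability of the limit or its Poisson equation. You also get the stronger conclusion $\Omega_k\to0$ in $L^2_{\rm loc}(\R^5\setminus\{0\})$.

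You should drop one side claim. In the Lamb-form paragraph you assert that mollification, or the limiting local energy inequality, gives the identity $\int|\nabla V|^2\phi=\int\tfrac12|V|^2\Delta\phi$. Neither does.
\begin{itemize}
\item A priori $|V|^2|\nabla V|$ lies only in $L^{10/11}_{\rm loc}$ in five dimensions, so testing by $V\phi$ is not justified.
\item Suitability yields only the inequality $\le$.
\end{itemize}
Since that paragraph is not used afterwards, your proof is unaffected. Note, though, that this inequality combined with $\Delta\tfrac12|V|^2=\partial_iV_j\partial_jV_i$ is exactly the paper's proof that $\Omega=0$.
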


\begin{proof}
Because both terms in \eqref{eq:defect-zero} are nonnegative,
\begin{equation}
 (V_k)_r\to0\quad\text{in }L^2(A),
 \qquad H_k\to0\quad\text{in }L^1(A).
 \label{eq:radial-H-conv}
\end{equation}
Strong $L^3$ convergence implies
\[
 |V_k|^2\to|V|^2\quad\text{strongly in }L^{3/2}(A).
\]
Since $Q_k=H_k-|V_k|^2/2$, we have
\[
 Q_k\to-\frac{|V|^2}{2}\quad\text{in }L^1(A).
\]
Comparison with \eqref{eq:Qkweak} gives
\begin{equation}
 Q=-\frac{|V|^2}{2},
 \qquad H_V:=\frac{|V|^2}{2}+Q=0.
 \label{eq:HVzero}
\end{equation}
Moreover, \eqref{eq:Vkstrong} and \eqref{eq:radial-H-conv} imply
\begin{equation}
 V_r=0.
 \label{eq:Vrzero}
\end{equation}

It remains to prove rigorously that $H_V=0$ forces the antisymmetric gradient to vanish.
Suitability passes to the limit, so for every nonnegative $\phi\in C_c^\infty(A)$,
\begin{equation}
 \int |\nabla V|^2\phi\,dx
 \leq\int\frac{|V|^2}{2}\Delta\phi\,dx,
 \label{eq:LEI-Hzero}
\end{equation}
where \eqref{eq:HVzero} was used.  Taking the divergence of the limiting momentum
equation gives
\[
 -\Delta Q=\partial_iV_j\partial_jV_i.
\]
As $Q=-|V|^2/2$,
\begin{equation}
 \Delta\frac{|V|^2}{2}=\partial_iV_j\partial_jV_i
 \quad\text{in }\cD'(A).
 \label{eq:e-Poisson}
\end{equation}
Using \eqref{eq:e-Poisson} in \eqref{eq:LEI-Hzero},
\[
 \int\left(|\nabla V|^2-\partial_iV_j\partial_jV_i\right)\phi\,dx\leq0.
\]
The integrand equals
\[
 \frac12\sum_{i,j}(\partial_iV_j-\partial_jV_i)^2\geq0.
\]
Hence
\begin{equation}
 \partial_iV_j-\partial_jV_i=0
 \quad\text{a.e. in }\R^5\setminus\{0\}.
 \label{eq:curlzero}
\end{equation}

Together with $\diver V=0$, equation \eqref{eq:curlzero} implies $\Delta V=0$ in
distributions, so $V$ is smooth.  Since $\R^5\setminus\{0\}$ is simply connected,
$V=\nabla\Phi$ for a harmonic scalar function $\Phi$.  Equation \eqref{eq:Vrzero}
gives
\[
 x\cdot\nabla\Phi=0.
\]
Thus $\Phi$ is homogeneous of degree zero, $\Phi(x)=\varphi(x/|x|)$.  Since
\[
 0=\Delta\Phi=|x|^{-2}\Delta_{\mathbb S^4}\varphi,
\]
$\varphi$ is a harmonic function on the compact sphere and is therefore constant.
Consequently $V=0$, and \eqref{eq:HVzero} also gives $Q=0$.
\end{proof}

\subsection{Completion of the sign-rigidity proof}

\begin{proof}[Proof of Proposition \ref{thm:sign-rigidity}]
Set $H=|U|^2/2+P$.  By the sign assumption \eqref{eq:sign-assumption} and
Lemma \ref{lem:g3}, the function
\[
 \Xi_2(R)=R^{-2}\int_{\partial B_R}(U_r^2+P)\,dS
\]
has a nonincreasing representative, its endpoint limits
\begin{equation}
 \ell_0:=\lim_{R\downarrow0}\Xi_2(R),
 \qquad
 \ell_\infty:=\lim_{R\uparrow\infty}\Xi_2(R)
 \label{eq:g3-endpoints}
\end{equation}
are finite, and
\begin{equation}
 \mathcal D(U,P):=
 \int_{\R^5}\frac{3U_r^2-2H}{|x|^3}\,dx<\infty.
 \label{eq:global-defect-again}
\end{equation}
We prove that both limits in \eqref{eq:g3-endpoints} are zero.

Let first $\lambda_k\downarrow0$ and define
\begin{equation}
 U_k(x)=\lambda_kU(\lambda_kx),
 \qquad P_k(x)=\lambda_k^2P(\lambda_kx),
 \qquad H_k(x)=\lambda_k^2H(\lambda_kx).
 \label{eq:global-rescaling}
\end{equation}
The global Morrey bounds are invariant.  Lemma \ref{lem:compactness} therefore gives,
after extraction, a global suitable weak limit $(V,Q)$ with
\begin{equation}
 U_k\to V\text{ strongly in }L^3_{\rm loc},
 \qquad P_k\rightharpoonup Q\text{ weakly in }L^{3/2}_{\rm loc}.
 \label{eq:endpoint-compactness}
\end{equation}
For each compact annulus $A\Subset\R^5\setminus\{0\}$, scale invariance gives
\begin{equation}
 \int_A\frac{3|(U_k)_r|^2-2H_k}{|x|^3}\,dx
 =\int_{\lambda_kA}\frac{3U_r^2-2H}{|y|^3}\,dy\longrightarrow0.
 \label{eq:small-end-defect}
\end{equation}
The convergence follows from the absolute continuity of the integrable function in
\eqref{eq:global-defect-again}, since $\lambda_kA$ shrinks to the origin.  Since
$|x|^{-3}$ is bounded above and below on $A$, the corresponding unweighted defect also
tends to zero.  Proposition \ref{prop:zero-defect} shows that $V=Q=0$ on
$\R^5\setminus\{0\}$.

Denote by $g_{3,k}$ the radial flux associated with $(U_k,P_k)$.  A change of variables
on spheres gives the exact scaling law
\begin{equation}
 g_{3,k}(r)=\Xi_2(\lambda_kr).
 \label{eq:g3-scaling}
\end{equation}
Fix a nonnegative $\psi\in C_c^\infty((1,2))$ with
$\int_1^2\psi(r)\,dr>0$.  The coarea formula and
\eqref{eq:endpoint-compactness} yield
\begin{align}
 \int_1^2\psi(r)\Xi_2(\lambda_kr)\,dr
 &=\int_{B_2\setminus B_1}
 \psi(|x|)|x|^{-2}\bigl((U_k)_r^2+P_k\bigr)\,dx
 \longrightarrow0.
 \label{eq:g3-average-limit}
\end{align}
Indeed, $(U_k)_r^2\to0$ strongly in $L^{3/2}$ on the annulus, while $P_k\rightharpoonup0$
in $L^{3/2}$ and the fixed weight belongs to $L^3$.  On the other hand, recalling \eqref{eq:g3-endpoints}, we have
$\Xi_2(\lambda_kr)\to\ell_0$ for every $r\in(1,2)$.  Since the monotone function $\Xi_2$
is bounded between its two finite endpoint limits, dominated convergence in
\eqref{eq:g3-average-limit} gives
\[
 \ell_0\int_1^2\psi(r)\,dr=0.
\]
Thus $\ell_0=0$.

Now let $\lambda_k\uparrow\infty$ and repeat the same argument.  Compactness again gives
a limit $(V,Q)$, while
\[
 \int_{\lambda_kA}\frac{3U_r^2-2H}{|y|^3}\,dy\longrightarrow0
\]
because $\lambda_kA$ escapes to infinity and the defect is integrable.  Proposition
\ref{prop:zero-defect} again applies, using the boundedness of $|x|^{\pm3}$ on $A$, and
gives $V=Q=0$.  Equations
\eqref{eq:g3-scaling}--\eqref{eq:g3-average-limit}, now followed by dominated convergence
as $\lambda_k\to\infty$, prove $\ell_\infty=0$.

The function $\Xi_2$ is nonincreasing and has both endpoint limits equal to zero; hence
$\Xi_2\equiv0$.  The identity \eqref{eq:g3identity} and the nonnegativity of its integrand
give
\begin{equation}
 U_r=0,
 \qquad H=0
 \quad\text{a.e. in }\R^5\setminus\{0\}.
 \label{eq:global-zero-defect}
\end{equation}
Apply Proposition \ref{prop:zero-defect} to the constant sequence
$(V_k,Q_k)=(U,P)$ on every compact annulus.  Condition
\eqref{eq:global-zero-defect} makes its defect identically zero, so the proposition gives
$U=P=0$ on $\R^5\setminus\{0\}$.  Since a single point has measure zero, the same holds
in $\R^5$.  This proves Proposition \ref{thm:sign-rigidity}.
\end{proof}

\section{The five-dimensional velocity-only rigidity: proof of Theorem \ref{thm:smooth-punctured}}

In this section, we combine the preceding weak mechanisms to prove Theorem \ref{thm:smooth-punctured}.  The canonical pressure is first reconstructed from the
velocity, the punctured smooth solution is then extended across the origin as a
global suitable weak solution, and punctured smoothness supplies RHP.  The implications
$\mathrm{RHP}\Rightarrow H\leq0$ and $H\leq0\Rightarrow U=P=0$ complete the proof.
Thus neither a pressure Morrey bound nor RHP is assumed in the theorem itself.

\begin{proof}[Proof of Theorem \ref{thm:smooth-punctured}]
Apply Proposition \ref{prop:auto-pressure} to the punctured solution $(U,P_0)$.
There is a unique constant $c$ such that
\[
 P:=P^\sharp=P_0-c
\]
satisfies the pressure Morrey estimate \eqref{eq:intro-canonical-pressure}, and
$(U,P)$ solves the same smooth punctured equations.  Thus
$\mathfrak M(U,P)<\infty$.  Notice that this preliminary step uses only the velocity
Morrey bound; no estimate for the originally chosen representative $P_0$ is assumed.

We first show that the punctured solution extends across the origin as a global suitable
weak solution.  Let $\chi_\varepsilon$ be a radial cutoff satisfying
\[
 \chi_\varepsilon=0\text{ on }B_\varepsilon,
 \qquad
 \chi_\varepsilon=1\text{ outside }B_{2\varepsilon},
 \qquad
 |\nabla\chi_\varepsilon|\leq C\varepsilon^{-1},
 \quad
 |\Delta\chi_\varepsilon|\leq C\varepsilon^{-2}.
\]
Fix $R>0$ and choose $\eta\in C_c^\infty(B_{2R})$ with $\eta=1$ on $B_R$.
The smooth local energy equality on $\R^5\setminus\{0\}$, tested with
$\eta^2\chi_\varepsilon$, gives
\begin{align}
 \int_{B_R\setminus B_{2\varepsilon}}|\nabla U|^2\,dx
 \leq C_R
 &+C\varepsilon^{-2}\int_{A_\varepsilon}|U|^2\,dx\notag\\
 &+C\varepsilon^{-1}\int_{A_\varepsilon}
 \bigl(|U|^3+|P||U|\bigr)\,dx,
 \label{eq:punctured-Caccioppoli}
\end{align}
where $A_\varepsilon=B_{2\varepsilon}\setminus B_\varepsilon$, and $C_R$ is
independent of $\varepsilon$.  The Morrey bounds imply
\begin{align}
 \int_{A_\varepsilon}|U|^2\,dx
 &\leq
 \left(\int_{B_{2\varepsilon}}|U|^3\,dx\right)^{2/3}
 |B_{2\varepsilon}|^{1/3}
 \leq C_M\varepsilon^3,
 \label{eq:punctured-U2}\\
 \int_{A_\varepsilon}|P||U|\,dx
 &\leq
 \left(\int_{B_{2\varepsilon}}|P|^{3/2}\,dx\right)^{2/3}
 \left(\int_{B_{2\varepsilon}}|U|^3\,dx\right)^{1/3}
 \leq C_{M,K}\varepsilon^2.
 \label{eq:punctured-PU}
\end{align}
Also $\int_{A_\varepsilon}|U|^3\,dx\leq C_M\varepsilon^2$.
Consequently all inner-cutoff terms in \eqref{eq:punctured-Caccioppoli} are
$O(\varepsilon)$, and monotone convergence gives
\begin{equation}
 U\in H^1_{\rm loc}(\R^5).
 \label{eq:punctured-H1-extension}
\end{equation}

We next verify the equations across the origin.  Test the weak momentum equation on the
punctured space with $\varphi\chi_\varepsilon$, where
$\varphi\in C_c^\infty(\R^5;\R^5)$.  The only new viscous error is bounded by
\[
 \|\nabla U\|_{L^2(A_\varepsilon)}
 \|\varphi\nabla\chi_\varepsilon\|_{L^2(A_\varepsilon)}
 \leq C_\varphi\varepsilon^{3/2}
 \|\nabla U\|_{L^2(A_\varepsilon)}\longrightarrow0.
\]
Using \eqref{eq:punctured-U2} and H\"older's inequality, the convection and pressure
errors satisfy
\[
 \varepsilon^{-1}\int_{A_\varepsilon}|U|^2\,dx=O(\varepsilon^2),
 \qquad
 \varepsilon^{-1}\int_{A_\varepsilon}|P|\,dx=O(\varepsilon^2).
\]
The divergence equation is treated in the same way, since
$\varepsilon^{-1}\int_{A_\varepsilon}|U|\,dx=O(\varepsilon^3)$.
Letting $\varepsilon\downarrow0$ proves \eqref{eq:NS} in $\cD'(\R^5)$.

Finally, insert $\phi\chi_\varepsilon$ into the smooth local energy equality for a
fixed nonnegative $\phi\in C_c^\infty(\R^5)$.  The terms containing derivatives of
$\chi_\varepsilon$ are bounded by exactly the quantities in
\eqref{eq:punctured-Caccioppoli} and hence tend to zero.  The other terms converge by
\eqref{eq:punctured-H1-extension}, the velocity Morrey bound, and the pressure-velocity
estimate \eqref{eq:punctured-PU}.  Thus the extension satisfies \eqref{eq:LEI} and is a
global suitable weak solution.

Proposition \ref{prop:smooth-RHP} shows that the original smooth solution has property
$\mathrm{(RHP)}$ on $\R^5\setminus\{0\}$.  Proposition \ref{Thm:RHP-sign} therefore
gives $H\leq0$ a.e. in $\R^5$.  Applying Proposition
\ref{thm:sign-rigidity} to the suitable extension yields $U=P=0$.  Since
$P_0=P+c$, the original pressure is the constant $c$. The proof is complete.
\end{proof}

\section{The five-dimensional regularity criteria: proof of Theorems \ref{thm:dirichlet-removability} and \ref{cor:first-singularity}}

This section returns from global rigidity to local regularity.  The first subsection
upgrades either bounded Dirichlet scale or bounded velocity scale to simultaneous
$C_5$ and $D_5$ control, using dyadic control of the velocity mean and contraction of
the harmonic pressure.  The second subsection performs the singular-point blow-up,
passes the reduced RHP inequality through strong annular pressure convergence, and
contradicts the nontriviality supplied by one-scale $\varepsilon$-regularity.

\subsection{From the Dirichlet or velocity scale to the pressure Morrey scale}

The next lemma supplies the iteration needed for Theorem
\ref{thm:dirichlet-removability}.  It is important to estimate the velocity mean before
using Sobolev--Poincar\'e, and to estimate the pressure through its harmonic part rather
than by a whole-space pressure formula.

\begin{Lem}
\label{lem:Dirichlet-to-Morrey}
Let $(u,p)$ be a suitable weak solution of \eqref{eq:NS} in $B_1\subset\R^5$.
Suppose that for some $R_0\in(0,1)$,
\begin{equation}
 E_*:=\sup_{0<r<R_0}r^{-1}\int_{B_r}|\nabla u|^2\,dx<\infty.
 \label{eq:E-star}
\end{equation}
Set $R_*:=R_0/4$.  Then
\begin{equation}
 \sup_{0<r<R_*}C_5(r)+\sup_{0<r<R_*}D_5(r)<\infty.
 \label{eq:Dirichlet-Morrey-conclusion}
\end{equation}
More precisely,
\begin{align}
 \sup_{0<r<R_*}C_5(r)
 &\leq C\left(E_*^{3/2}
       +\bigl[R_*|(u)_{B_{R_*}}|\bigr]^3\right),
 \label{eq:C-from-E}\\
 \sup_{0<r<R_*}D_5(r)
 &\leq C\left(D_5(R_*)+E_*^{3/2}\right),
 \label{eq:D-from-E}
\end{align}
where the constants are universal.  The estimate holds for the pressure occurring in
the given suitable pair; its fixed-scale value $D_5(R_*)$ records the harmless freedom
to add a pressure constant.
\end{Lem}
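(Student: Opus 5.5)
The plan is to prove the velocity estimate \eqref{eq:C-from-E} first and then the pressure estimate \eqref{eq:D-from-E}, following the order indicated in the introduction: mean, then $C_5$, then $D_5$.

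\textbf{Velocity mean.} For $0<r\le R_*$, Poincar\'e on $B_r$ in dimension five gives
\[
 |(u)_{B_{r/2}}-(u)_{B_r}|\le C r^{-5/2}\|u-(u)_{B_r}\|_{L^2(B_r)}\le C r^{-3/2}\|\nabla u\|_{L^2(B_r)}\le C E_*^{1/2} r^{-1}.
\]
Telescoping along $r_j=2^{-j}R_*$ yields
\[
 |(u)_{B_{r_j}}|\le |(u)_{B_{R_*}}|+CE_*^{1/2}\sum_{i<j}r_i^{-1}\le |(u)_{B_{R_*}}|+CE_*^{1/2}r_j^{-1},
\]
since the sum is geometric and dominated by its last term. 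Hence
\[
 r|(u)_{B_r}|\le C\bigl(E_*^{1/2}+R_*|(u)_{B_{R_*}}|\bigr)\qquad\text{for all } r\le R_*,
\]
where intermediate radii are handled by comparing with the nearest dyadic radius. This is the step where the mean must be controlled before Sobolev is applied.

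\textbf{From the mean to $C_5$.} Sobolev--Poincar\'e with $3<10/3$ gives
\[
 \int_{B_r}|u-(u)_{B_r}|^3\le C r^{5/2}\|\nabla u\|_{L^2(B_r)}^3\le CE_*^{3/2}r^4.
\]
Since $\int_{B_r}|(u)_{B_r}|^3=Cr^5|(u)_{B_r}|^3\le Cr^2\bigl(r|(u)_{B_r}|\bigr)^3$, dividing by $r^2$ (and using $r\le R_*$ to bound $r^2\le C$) gives \eqref{eq:C-from-E}. This part is not the main obstacle.

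\textbf{Pressure.} Fix $r\le R_*$ and $\theta\in(0,1/4)$, and decompose $p=p_1+p_2$ on $B_r$, where
\[
 p_1:=\mathcal R_i\mathcal R_j\bigl(\chi_{B_r}(u_i-a_i)(u_j-a_j)\bigr),\qquad a=(u)_{B_r}.
\]
Subtracting the constant $a$ is legitimate because $\partial_i\partial_j(u_iu_j)=\partial_i\partial_j\bigl((u_i-a_i)(u_j-a_j)\bigr)$ by $\diver u=0$. Then $p_2$ is harmonic in $B_r$. Calder\'on--Zygmund bounds give
\[
 \int_{B_r}|p_1|^{3/2}\le C\int_{B_r}|u-a|^3\le CE_*^{3/2}r^4.
\]
For the harmonic part, I would use the mean-value/sup estimate on $B_{r/2}$ after subtracting nothing, i.e. use $|p_2|^{3/2}$ subharmonic, to get
\[
 \int_{B_{\theta r}}|p_2|^{3/2}\le C\theta^5\int_{B_{r}}|p_2|^{3/2}.
\]
Combining the two pieces,
\[
 D_5(\theta r)\le C\theta^3D_5(r)+C\theta^{-2}E_*^{3/2}r^{2}.
\]
Using the centered oscillation rather than $C_5(2r)$ is what makes the final bound $E_*$-only, as in \eqref{eq:D-from-E}. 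I then choose $\theta$ with $C\theta^3\le1/2$ and iterate from $r=R_*$ down to obtain
\[
 \sup_k D_5(\theta^kR_*)\le 2D_5(R_*)+CE_*^{3/2}.
\]
Intermediate radii are handled via $D_5(s)\le \theta^{-2}D_5(\theta^kR_*)$ for $s\in[\theta^{k+1}R_*,\theta^kR_*]$.

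\textbf{Expected obstacle.} The main care point is this pressure decomposition. The distributional identity $-\Delta p=\partial_i\partial_j(u_iu_j)$ must hold for a suitable weak solution, which it does. Then $p-p_1$ must be genuinely harmonic inside $B_r$, which holds because the cutoff is the indicator, equal to $1$ on all of $B_r$. Finally, the constant $D_5(R_*)$ must be retained, since $p$ is fixed only up to the given representative.
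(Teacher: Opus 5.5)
Your proposal is correct and is essentially the paper's proof: dyadic telescoping of $r|(u)_{B_r}|$ via Poincar\'e, Sobolev--Poincar\'e for the oscillation, and a mean-subtracted Calder\'on--Zygmund/harmonic splitting of $p$ iterated with a $\theta^3$ contraction. Your use of the indicator of $B_r$ with $a=(u)_{B_r}$, instead of a smooth cutoff on $B_{2r}$ with $a=(u)_{B_{2r}}$, makes no difference.

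The one slip is the scaling in the Sobolev--Poincar\'e step. In five dimensions,
\[
\int_{B_r}|u-(u)_{B_r}|^3\,dx\le C|B_r|^{1/10}\|u-(u)_{B_r}\|_{L^{10/3}(B_r)}^3\le Cr^{1/2}\|\nabla u\|_{L^2(B_r)}^3\le CE_*^{3/2}r^2 .
\]
It is not $r^{5/2}$ and $r^4$: your inequality fails for small $r$ under the dilation $x\mapsto rx$. With the correct exponent, the spurious factor $r^2$ in your pressure recursion disappears, and the recursion reads $D_5(\theta r)\le C\theta^3D_5(r)+C\theta^{-2}E_*^{3/2}$. This yields exactly the stated bounds, and you no longer need the step $r^2\le C$.
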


\begin{proof}
Write
\[
 m(r):=(u)_{B_r}= \frac{1}{|B_r|}\int_{B_r}u\,dx.
\]
We first control the scale-invariant velocity mean.  If $0<2r<R_0$, then
H\"older's inequality and Poincar\'e's inequality on $B_{2r}$ give
\begin{align}
 |m(r)-m(2r)|
 &\leq \frac{1}{|B_r|} \int_{B_r}|u-m(2r)|\,dx\notag\\
 &\leq C r^{-5/2}\|u-m(2r)\|_{L^2(B_{2r})}\notag\\
 &\leq C r^{-3/2}\|\nabla u\|_{L^2(B_{2r})}
 \leq C E_*^{1/2}r^{-1}.
 \label{eq:mean-one-step}
\end{align}
For a given $0<r<R_*$, choose $j\geq0$ so that
$R_*/2<2^jr\leq R_*$.  Iterating \eqref{eq:mean-one-step}, and then comparing the
two comparable balls $B_{2^jr}$ and $B_{R_*}$ once more by Poincar\'e's inequality,
we obtain
\begin{align}
 |m(r)|
 &\leq |m(R_*)|+
 C E_*^{1/2}\left(R_*^{-1}+\sum_{\ell=0}^{j-1}(2^\ell r)^{-1}\right)\notag\\
 &\leq |m(R_*)|+C E_*^{1/2}r^{-1}.
\end{align}
Consequently,
\begin{equation}
 \sup_{0<r<R_*}r|m(r)|
 \leq C\left(R_*|m(R_*)|+E_*^{1/2}\right).
 \label{eq:mean-scale-bound}
\end{equation}

The five-dimensional Sobolev--Poincar\'e inequality and H\"older's inequality imply
\begin{align}
 \int_{B_r}|u-m(r)|^3\,dx
 &\leq |B_r|^{1/10}
       \|u-m(r)\|_{L^{10/3}(B_r)}^3\notag\\
 &\leq C r^{1/2}\|\nabla u\|_{L^2(B_r)}^3
 \leq C E_*^{3/2}r^2.
 \label{eq:oscillation-L3}
\end{align}
Combining \eqref{eq:mean-scale-bound} and \eqref{eq:oscillation-L3},
\begin{align*}
 C_5(r)
 &\leq Cr^{-2}\int_{B_r}|u-m(r)|^3\,dx
      +Cr^{-2}|B_r||m(r)|^3\\
 &\leq C\left(E_*^{3/2}
       +[R_*|m(R_*)|]^3\right),
\end{align*}
which proves \eqref{eq:C-from-E}.

We next estimate the pressure.  Fix $0<r\leq R_*$ and put
$a=m(2r)$.  Since $a$ is constant and $\diver u=0$,
\begin{equation}
 \partial_i\partial_j(u_iu_j)
 =\partial_i\partial_j\bigl[(u_i-a_i)(u_j-a_j)\bigr]
 \quad\text{in }\cD'(B_{2r}).
 \label{eq:pressure-subtract-mean}
\end{equation}
Choose $\chi_r\in C_c^\infty(B_{2r})$ with $\chi_r=1$ on $B_r$ and define on
$\R^5$
\[
 p_{1,r}:=\mathcal R_i\mathcal R_j
 \left(\chi_r(u_i-a_i)(u_j-a_j)\right),
 \qquad h_r:=p-p_{1,r}.
\]
By the pressure equation and \eqref{eq:pressure-subtract-mean}, $h_r$ is harmonic in
$B_r$.  The Calder\'on--Zygmund estimate and \eqref{eq:oscillation-L3}, applied on
$B_{2r}$, give
\begin{equation}
 \int_{\R^5}|p_{1,r}|^{3/2}\,dx
 \leq C\int_{B_{2r}}|u-m(2r)|^3\,dx
 \leq C E_*^{3/2}r^2.
 \label{eq:pressure-local-part}
\end{equation}

Let $0<\theta\leq1/2$.  The interior $L^{3/2}$ estimate for the harmonic function
$h_r$ yields
\begin{equation}
 \int_{B_{\theta r}}|h_r|^{3/2}\,dx
 \leq C\theta^5\int_{B_r}|h_r|^{3/2}\,dx.
 \label{eq:harmonic-pressure-decay}
\end{equation}
Using $p=p_{1,r}+h_r$, \eqref{eq:pressure-local-part}, and
\eqref{eq:harmonic-pressure-decay}, we find
\begin{align}
 D_5(\theta r)
 &\leq C(\theta r)^{-2}
       \int_{B_{\theta r}}|p_{1,r}|^{3/2}\,dx
   +C(\theta r)^{-2}
       \int_{B_{\theta r}}|h_r|^{3/2}\,dx\notag\\
 &\leq C\theta^{-2}E_*^{3/2}
   +C\theta^3D_5(r).
 \label{eq:pressure-decay-iteration}
\end{align}
Here the occurrence of $p_{1,r}$ inside
$\int_{B_r}|h_r|^{3/2}$ is absorbed once more by
\eqref{eq:pressure-local-part}.

Choose a fixed $\theta_0\in(0,1/2]$ so small that the coefficient in
\eqref{eq:pressure-decay-iteration} satisfies $C\theta_0^3\leq1/2$.  Iteration gives
\begin{equation}
 D_5(\theta_0^jR_*)
 \leq 2^{-j}D_5(R_*)+C\theta_0^{-2}E_*^{3/2}
 \quad (j\geq0).
 \label{eq:pressure-discrete-iteration}
\end{equation}
For arbitrary $0<r<R_*$, select $j$ with
$\theta_0^{j+1}R_*<r\leq\theta_0^jR_*$.  By inclusion of the centered balls,
\[
 D_5(r)
 \leq\theta_0^{-2}D_5(\theta_0^jR_*).
\]
Together with \eqref{eq:pressure-discrete-iteration}, this proves
\eqref{eq:D-from-E}.  Finally, $m(R_*)$ and $D_5(R_*)$ are finite because
$u\in H^1_{\rm loc}(B_1)$ and $p\in L^{3/2}_{\rm loc}(B_1)$.  Hence
\eqref{eq:Dirichlet-Morrey-conclusion} follows.
\end{proof}

\begin{Lem}
\label{lem:C-to-D}
Let $(u,p)$ be a suitable weak solution of \eqref{eq:NS} in $B_1\subset\R^5$ and
suppose that, for some $R_0\in(0,1)$,
\begin{equation}
 M_*:=\sup_{0<r<R_0}C_5(r)<\infty.
 \label{eq:C-star}
\end{equation}
Set $R_*:=R_0/4$.  Then
\begin{equation}
 \sup_{0<r<R_*}D_5(r)
 \leq C\bigl(D_5(R_*)+M_*\bigr),
 \label{eq:D-from-C}
\end{equation}
where $C$ is universal.
\end{Lem}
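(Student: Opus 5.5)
The plan is to repeat the pressure half of the proof of Lemma \ref{lem:Dirichlet-to-Morrey}, with the Dirichlet-energy input replaced by the cubic velocity bound \eqref{eq:C-star}. Now we do not subtract the velocity mean. Fix $0<r\leq R_*$ and choose $\chi_r\in C_c^\infty(B_{2r})$ with $\chi_r=1$ on $B_r$. Set
\[
 p_{1,r}:=\mathcal R_i\mathcal R_j\bigl(\chi_r u_iu_j\bigr),
 \qquad h_r:=p-p_{1,r}.
\]
Taking the divergence of the momentum equation gives $-\Delta p=\partial_i\partial_j(u_iu_j)$ in $\cD'(B_1)$. Since $\chi_r=1$ on $B_r$, $h_r$ is harmonic in $B_r$. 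The Calder\'on--Zygmund theorem then yields
\[
 \int_{\R^5}|p_{1,r}|^{3/2}\,dx\leq C\int_{B_{2r}}|u|^3\,dx\leq C M_*(2r)^2\leq CM_*r^2,
\]
which holds because $2r\leq R_0/2<R_0$. This bound replaces \eqref{eq:pressure-local-part}.

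Next fix $0<\theta\leq 1/2$. The interior mean-value estimate for harmonic functions gives $\int_{B_{\theta r}}|h_r|^{3/2}\leq C\theta^5\int_{B_r}|h_r|^{3/2}$, and we bound $\int_{B_r}|h_r|^{3/2}\leq C\int_{B_r}|p|^{3/2}+C\int|p_{1,r}|^{3/2}$. Dividing by $(\theta r)^2$ gives the one-step decay
\[
 D_5(\theta r)\leq C\theta^3D_5(r)+C\theta^{-2}M_*,
\]
which is the form announced in observation 3 of the introduction. We then choose a universal $\theta_0\in(0,1/2]$ with $C\theta_0^3\leq1/2$ and iterate from $r=R_*$. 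Every radius $\theta_0^jR_*$ stays in $(0,R_*]$, so the bound $\int_{B_{2r}}|u|^3\leq CM_*r^2$ is available at every step. The iteration gives
\[
 D_5(\theta_0^jR_*)\leq 2^{-j}D_5(R_*)+2C\theta_0^{-2}M_*.
\]

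For general $0<r<R_*$, pick $j$ with $\theta_0^{j+1}R_*<r\leq\theta_0^jR_*$. Inclusion of centered balls gives $D_5(r)\leq\theta_0^{-2}D_5(\theta_0^jR_*)$, and this proves \eqref{eq:D-from-C} with a universal constant. $D_5(R_*)$ is finite because $p\in L^{3/2}_{\rm loc}(B_1)$.

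I expect no serious obstacle here. The only points needing care are bookkeeping ones. First, the localized Riesz-transform piece must be defined with the cutoff so that $h_r$ is genuinely harmonic in $B_r$; this needs the distributional pressure equation, which follows from $\diver u=0$ in the weak formulation. Second, the constant freedom in $p$ must enter only through the fixed-scale term $D_5(R_*)$ and not through the iteration constants.
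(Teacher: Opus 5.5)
Your proposal is correct and follows essentially the same route as the paper. Both arguments use the localized decomposition $p=p_{1,r}+h_r$ with the Calder\'on--Zygmund bound $\int|p_{1,r}|^{3/2}\le CM_*r^2$, the $\theta^5$ interior decay of the harmonic part, the contraction $D_5(\theta r)\le C\theta^3D_5(r)+C\theta^{-2}M_*$ iterated along the radii $\theta_0^jR_*$, and a final ball-inclusion step.
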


\begin{proof}
Fix $0<r\leq R_*$.  Choose $\chi_r\in C_c^\infty(B_{2r})$ with
$\chi_r=1$ on $B_r$ and define
\[
 p_{1,r}:=\mathcal R_i\mathcal R_j(\chi_r u_i u_j),
 \qquad h_r:=p-p_{1,r}.
\]
Taking the divergence of the momentum equation gives
$-\Delta p=\partial_i\partial_j(u_i u_j)$ in $B_1$.  Hence $h_r$ is harmonic in
$B_r$.  Since $2r<R_0$, the Calder\'on--Zygmund estimate and \eqref{eq:C-star} give
\begin{equation}
 \int_{\R^5}|p_{1,r}|^{3/2}\,dx
 \leq C\int_{B_{2r}}|u|^3\,dx
 \leq C M_*r^2.
 \label{eq:p1-from-C}
\end{equation}
For $0<\theta\leq1/2$, the interior estimate for harmonic functions yields
\[
 \int_{B_{\theta r}}|h_r|^{3/2}\,dx
 \leq C\theta^5\int_{B_r}|h_r|^{3/2}\,dx.
\]
Using $p=p_{1,r}+h_r$ twice and then \eqref{eq:p1-from-C}, we obtain
\begin{align}
 D_5(\theta r)
 &\leq C(\theta r)^{-2}\int_{B_{\theta r}}|p_{1,r}|^{3/2}\,dx
   +C(\theta r)^{-2}\int_{B_{\theta r}}|h_r|^{3/2}\,dx\notag\\
 &\leq C\theta^{-2}M_*+C\theta^3D_5(r)+C\theta^3M_*\notag\\
 &\leq C\theta^{-2}M_*+C\theta^3D_5(r).
 \label{eq:C-pressure-contraction}
\end{align}
Choose $\theta_0\in(0,1/2]$ so that $C\theta_0^3\leq1/2$.  Iterating
\eqref{eq:C-pressure-contraction} gives
\[
 D_5(\theta_0^jR_*)
 \leq2^{-j}D_5(R_*)+C\theta_0^{-2}M_*,
 \qquad j\geq0.
\]
If $\theta_0^{j+1}R_*<r\leq\theta_0^jR_*$, inclusion of the centered balls gives
\[
 D_5(r)\leq\theta_0^{-2}D_5(\theta_0^jR_*).
\]
This proves \eqref{eq:D-from-C}.  The starting value $D_5(R_*)$ is finite because
$p\in L^{3/2}_{\rm loc}(B_1)$.
\end{proof}

\subsection{Blow-up and completion of the regularity proofs}

\begin{proof}[Proof of Theorem \ref{thm:dirichlet-removability}]
If \eqref{eq:local-Dirichlet-bound} holds, Lemma
\ref{lem:Dirichlet-to-Morrey} gives both bounds in \eqref{eq:localMorrey} on a
possibly smaller fixed interval of radii.  If instead
\eqref{eq:local-velocity-Morrey-bound} holds, Lemma \ref{lem:C-to-D} supplies the
missing pressure bound, again after reducing the upper radius by a fixed factor.  In
either case, Theorem \ref{cor:first-singularity} implies that the origin is regular.
Since the solution is already smooth on $B_1\setminus\{0\}$, standard elliptic
bootstrapping gives smoothness throughout $B_1$.
\end{proof}

\begin{proof}[Proof of Theorem \ref{cor:first-singularity}]
Assume that the origin is singular.  The one-scale criterion in Proposition
\ref{thm:epsilon} gives
\begin{equation}
 C_5(r)\geq\eps_*
 \quad\text{for all sufficiently small }r>0.
 \label{eq:first-singularity-lower}
\end{equation}
Fix any sequence $r_k\downarrow0$ and define
\[
 u_k(x)=r_ku(r_kx),
 \qquad p_k(x)=r_k^2p(r_kx).
\]
For every fixed $R>0$, one has $r_kR<r_0$ for all sufficiently large $k$; hence
\eqref{eq:localMorrey} gives the same bounds for $(u_k,p_k)$ on $B_R$.
Lemma \ref{lem:compactness} gives, after extraction, a global suitable weak limit
$(U,P)$ satisfying the global Morrey bounds and
\begin{equation}
 \int_{B_1}|U|^3\,dx
 =\lim_{k\to\infty}C_5(r_k)\geq\eps_*.
 \label{eq:first-singularity-nonzero}
\end{equation}
It remains to show that this tangent inherits enough of the head-pressure inequality.
This is the point at which the argument differs from the pointwise compactness proof in
\cite{BGLWX2025}.

On every compact annulus $A\Subset\R^5\setminus\{0\}$, the pairs $(u_k,p_k)$ are
smooth for all large $k$.  Write
\[
 H_k=\frac{|u_k|^2}{2}+p_k,
 \qquad W_k=(H_k)_+^{3/4}.
\]
Proposition \ref{prop:smooth-RHP} and the uniform annular estimate
\eqref{eq:annular-W} show that $\{W_k\}$ is bounded in $H^1(A)$.  Lemma
\ref{lem:pressure-strong}, applied on the punctured space, improves the pressure
convergence to
\[
 p_k\to P\quad\text{strongly in }L^{3/2}_{\rm loc}
 (\R^5\setminus\{0\}).
\]
Together with $u_k\to U$ strongly in $L^3_{\rm loc}$, this gives
\[
 H_k\to H:=\frac{|U|^2}{2}+P
 \quad\text{strongly in }L^{3/2}_{\rm loc}.
\]
Since the positive part is Lipschitz and
$|a^{3/4}-b^{3/4}|^2\leq |a-b|^{3/2}$ for $a,b\geq0$, it follows that
\[
 W_k\to W:=H_+^{3/4}\quad\text{strongly in }L^2_{\rm loc}.
\]
The uniform $H^1$ bound, Rellich's theorem, and interpolation with the uniform
$L^{10/3}$ bound further give, after extraction,
\begin{equation}
 W_k\to W\quad\text{strongly in }L^3_{\rm loc},
 \qquad
 \nabla W_k\rightharpoonup\nabla W\quad\text{weakly in }L^2_{\rm loc}.
 \label{eq:Wk-stability}
\end{equation}

Define
\[
 \cF_k=\frac43W_k\nabla W_k-\frac23u_kW_k^2,
 \qquad
 \cF=\frac43W\nabla W-\frac23UW^2.
\]
The first convergence in \eqref{eq:Wk-stability} and strong $L^3$ convergence of
$u_k$ show that $\cF_k\to\cF$ in distributions: the first product is strong--weak in
$L^2\cdot L^2$, and $u_kW_k^2\to UW^2$ strongly in $L^1$.  For every nonnegative
$\phi\in C_c^\infty(\R^5\setminus\{0\})$, RHP for $(u_k,p_k)$ gives
\[
 -\int\cF_k\cdot\nabla\phi\,dx
 \geq\frac89\int\phi|\nabla W_k|^2\,dx.
\]
Passing to the limit and using weak lower semicontinuity yields the reduced RHP
inequality \eqref{eq:RHP} for $(U,P)$. Proposition \ref{Thm:RHP-sign} therefore gives $H\leq0$.  Proposition
\ref{thm:sign-rigidity} now implies $U=P=0$, contradicting
\eqref{eq:first-singularity-nonzero}.  Hence the origin is regular.  Standard elliptic
bootstrapping, combined with the assumed punctured smoothness, gives smoothness in
$B_1$.
\end{proof}

\section{The four-dimensional velocity-only rigidity: proof of Theorem \ref{thm:cubic-45}}

The four-dimensional endpoint is treated separately in this section.  After canonical
pressure reconstruction, the punctured solution is extended in $H^1_{\rm loc}$; the
borderline inner-cutoff errors are only bounded, which is nevertheless sufficient in
four dimensions.  The embedding $\dot H^1(\R^4)\hookrightarrow L^4(\R^4)$ then gives
global velocity and pressure integrability, and an outer-cutoff argument makes the
Dirichlet energy vanish.  No RHP or five-dimensional compactness argument is used.

\begin{proof}[Proof of Theorem \ref{thm:cubic-45}]
Apply Proposition \ref{prop:auto-pressure} with $n=4$ to $(U,P_0)$.  It provides a
canonical pressure $P=P^\sharp$ such that
\[
 \sup_{R>0}R^{-1}\int_{B_R}|P|^{3/2}\,dx
 \leq C\cM_{3,4}(U),
\]
the pair $(U,P)$ satisfies the stationary equations on all of $\R^4$ in the
distributional sense, and $P_0=P+c_0$ on the punctured space for a constant $c_0$.
It therefore suffices to prove $U=P=0$ for this canonical pair.  Set
\[
 M_4:=\sup_{R>0}R^{-1}\int_{B_R}|U|^3\,dx
 +\sup_{R>0}R^{-1}\int_{B_R}|P|^{3/2}\,dx.
\]
Throughout this proof, $C$ may depend on $M_4$, but never on the inner or outer
cutoff radius.

\smallskip
\noindent\emph{Step 1: extension across the puncture.}
The cubic Morrey bounds and H\"older's inequality give, for every \(r>0\),
\begin{align}
 \int_{B_r}|U|^2\,dx
 &\leq
 \left(\int_{B_r}|U|^3\,dx\right)^{2/3}|B_r|^{1/3}
 \leq Cr^2,
 \label{eq:4d-U2}\\
 \int_{B_r}|U|^3\,dx&\leq Cr,
 \label{eq:4d-U3}\\
 \int_{B_r}|P||U|\,dx
 &\leq
 \left(\int_{B_r}|P|^{3/2}\,dx\right)^{2/3}
 \left(\int_{B_r}|U|^3\,dx\right)^{1/3}
 \leq Cr.
 \label{eq:4d-PU}
\end{align}
Let \(\chi_\varepsilon\) be the inner cutoff used in the proof of Theorem
\ref{thm:smooth-punctured}.  Testing the smooth local energy equality on the punctured
space with \(\eta^2\chi_\varepsilon\), where
\(\eta\in C_c^\infty(B_{2R})\) and \(\eta=1\) on \(B_R\), gives
\begin{align}
 \int_{B_R\setminus B_{2\varepsilon}}|\nabla U|^2\,dx
 \leq C_R
 &+C\varepsilon^{-2}\int_{A_\varepsilon}|U|^2\,dx\notag\\
 &+C\varepsilon^{-1}\int_{A_\varepsilon}
 \bigl(|U|^3+|P||U|\bigr)\,dx.
 \label{eq:4d-punctured-Caccioppoli}
\end{align}
All terms on which a derivative falls on \(\eta\) are supported away from the
puncture and are included in \(C_R\).  On
\(A_\varepsilon=B_{2\varepsilon}\setminus B_\varepsilon\), the terms containing
\(\Delta\chi_\varepsilon\) or \(\nabla\chi_\varepsilon\) are bounded respectively by
the first and second inner-error terms in
\eqref{eq:4d-punctured-Caccioppoli}.
In dimension four the terms on the second line and the inner \(L^2\) term are only
\(O(1)\), rather than \(o(1)\).  More precisely,
\[
 \varepsilon^{-2}\int_{A_\varepsilon}|U|^2\le C,
 \qquad
 \varepsilon^{-1}\int_{A_\varepsilon}(|U|^3+|P||U|)\le C.
\]
Thus the right-hand side of \eqref{eq:4d-punctured-Caccioppoli} is uniform in
\(\varepsilon\).  Since the sets \(B_R\setminus B_{2\varepsilon}\) increase to
\(B_R\setminus\{0\}\), monotone convergence shows that the punctured gradient belongs
to \(L^2(B_R)\).  It remains to identify it with the whole-space distributional
gradient.  For a scalar test function \(\zeta\in C_c^\infty(B_R)\), integrate by
parts against \(\zeta\chi_\varepsilon\).  The only extra term is bounded by
\[
 \left|\int U\zeta\,\nabla\chi_\varepsilon\,dx\right|
 \le C_\zeta\|U\|_{L^2(B_{2\varepsilon})}
             \|\nabla\chi_\varepsilon\|_{L^2(A_\varepsilon)}
 \le C\varepsilon\cdot\varepsilon=C\varepsilon^2\longrightarrow0,
\]
where \eqref{eq:4d-U2} was used.  Passing to the limit in the integration-by-parts
identity proves that the punctured gradient is the distributional gradient across the
origin.  Therefore
\begin{equation}
 U\in H^1_{\rm loc}(\R^4).
 \label{eq:4d-H1}
\end{equation}

The global distributional equations have already been supplied by Proposition
\ref{prop:auto-pressure}.  It remains to verify suitability; here one should not try
to pass the local energy
identity with the same inner cutoff, because its second-derivative error need not
vanish.  Instead use the already extended weak equation.  By \eqref{eq:4d-H1} and the
four-dimensional Sobolev embedding,
\[
 U\in L^4_{\rm loc}(\R^4).
\]
Consequently \(U\otimes U\in L^2_{\rm loc}\),
\(|U|^2|\nabla U|\in L^1_{\rm loc}\), and
\(PU\in L^1_{\rm loc}\), since \(P\in L^{3/2}_{\rm loc}\) and
\(U\in L^3_{\rm loc}\).  Fix \(\phi\in C_c^\infty(\R^4)\).  On an open
neighborhood compactly containing \(\supp\phi\), mollify \(U\) and denote the
resulting smooth divergence-free fields by \(U_m\).  Then
\[
 U_m\to U\quad\text{in }H^1\cap L^4,
 \qquad U_m\to U\quad\text{in }L^3.
\]
Use \(U_m\phi\) as a vector test in the extended weak momentum equation.  The viscous
term converges by strong \(H^1\) convergence.  Since
\(U\otimes U\in L^2_{\rm loc}\), the convection term converges by H\"older with
exponents \(2,2\).  Moreover,
\(\diver(U_m\phi)=U_m\cdot\nabla\phi\to U\cdot\nabla\phi\) in \(L^3\), so the
pressure term converges against \(P\in L^{3/2}_{\rm loc}\).  Passing to the limit and
using \(\diver U=0\), one obtains
\begin{align*}
 \int \nabla U:\nabla(U\phi)
 &=\int |\nabla U|^2\phi-\int\frac{|U|^2}{2}\Delta\phi,\\
 -\int U\otimes U:\nabla(U\phi)
 &=-\int\frac{|U|^2}{2}U\cdot\nabla\phi,\\
 -\int P\,\diver(U\phi)&=-\int PU\cdot\nabla\phi.
\end{align*}
This gives, for every
\(\phi\in C_c^\infty(\R^4)\),
\begin{equation}
 \int |\nabla U|^2\phi\,dx
 =
 \int\frac{|U|^2}{2}\Delta\phi\,dx
 +\int\left(\frac{|U|^2}{2}+P\right)U\cdot\nabla\phi\,dx.
 \label{eq:4d-local-energy-equality}
\end{equation}
In particular, the extension is a global suitable weak solution.

\smallskip
\noindent\emph{Step 2: global Dirichlet energy and the pressure normalization.}
Apply \eqref{eq:4d-local-energy-equality} with a cutoff which is one on \(B_R\),
supported in \(B_{2R}\), and has first and second derivatives bounded by
\(CR^{-1}\) and \(CR^{-2}\).  Estimates
\eqref{eq:4d-U2}--\eqref{eq:4d-PU} yield
\begin{equation}
 \int_{B_R}|\nabla U|^2\,dx\leq C
 \quad\text{for every }R>0.
 \label{eq:4d-global-energy-bound}
\end{equation}
Hence \(\nabla U\in L^2(\R^4)\).  The homogeneous Sobolev inequality gives a constant
vector \(c\) such that \(U-c\in L^4(\R^4)\); this is the standard homogeneous
Sobolev lemma for a locally integrable function whose distributional gradient belongs
to \(L^2(\R^4)\).  On the other hand,
\[
 |(U)_{B_R}|
 \leq\left( \frac{1}{|B_R|}\int_{B_R}|U|^3\,dx\right)^{1/3}
 \leq CR^{-1}\longrightarrow0.
\]
The averages of \(U-c\in L^4(\R^4)\) also tend to zero, since
\[
 \left|(U-c)_{B_R}\right|
 \leq |B_R|^{-1/4}\|U-c\|_{L^4(\R^4)}\longrightarrow0.
\]
Therefore \(c=0\) and
\begin{equation}
 U\in L^4(\R^4).
 \label{eq:4d-U4}
\end{equation}

Let
\[
 \Pi:=\mathcal R_i\mathcal R_j(U_iU_j).
\]
Taking the divergence of the global momentum equation gives
\(-\Delta P=\partial_i\partial_j(U_iU_j)\) in distributions.  With the convention
\(-\Delta\mathcal R_i\mathcal R_j f=\partial_i\partial_jf\), it follows that
\(h:=P-\Pi\) is harmonic on all of \(\R^4\), while
\eqref{eq:4d-U4} gives \(\Pi\in L^2(\R^4)\).  Furthermore,
\[
 \int_{B_R}|h|^{3/2}\,dx
 \leq C\int_{B_R}|P|^{3/2}\,dx
 +C\left(\int_{B_R}|\Pi|^2\,dx\right)^{3/4}|B_R|^{1/4}
 \leq CR.
\]
For any fixed \(x_0\in\R^4\) and
\(R\geq2|x_0|+1\), one has \(B_R(x_0)\subset B_{2R}\).  The preceding estimate,
with \(2R\) in place of \(R\), therefore also gives
\(\int_{B_R(x_0)}|h|^{3/2}\leq CR\).  The interior \(L^{3/2}\)-to-\(L^\infty\)
estimate for harmonic functions now yields
\[
 |h(x_0)|
 \leq C R^{-8/3}\|h\|_{L^{3/2}(B_R(x_0))}
 \leq C R^{-8/3}(CR)^{2/3}
 =CR^{-2}\longrightarrow0.
\]
Thus \(h=0\), and therefore
\begin{equation}
 P=\Pi\in L^2(\R^4).
 \label{eq:4d-P2}
\end{equation}

\smallskip
\noindent\emph{Step 3: vanishing of the global energy.}
Use in \eqref{eq:4d-local-energy-equality} an outer cutoff which is one on \(B_R\)
and supported in \(B_{2R}\).  Since all its derivatives are supported in
\(A_R=B_{2R}\setminus B_R\),
\begin{align}
 \int_{B_R}|\nabla U|^2\,dx
 \leq C\bigg[
 &R^{-2}\int_{A_R}|U|^2\,dx
 +R^{-1}\int_{A_R}|U|^3\,dx\notag\\
 &+R^{-1}\int_{A_R}|P||U|\,dx\bigg].
 \label{eq:4d-tail-Caccioppoli}
\end{align}
By \eqref{eq:4d-U4}--\eqref{eq:4d-P2},
\begin{align*}
 R^{-2}\int_{A_R}|U|^2
 &\leq R^{-2}|A_R|^{1/2}\|U\|_{L^4(A_R)}^2
 \leq C\|U\|_{L^4(A_R)}^2,\\
 R^{-1}\int_{A_R}|U|^3
 &\leq R^{-1}|A_R|^{1/4}\|U\|_{L^4(A_R)}^3
 \leq C\|U\|_{L^4(A_R)}^3,\\
 R^{-1}\int_{A_R}|P||U|
 &\leq R^{-1}|A_R|^{1/4}
 \|P\|_{L^2(A_R)}\|U\|_{L^4(A_R)}
 \leq C\|P\|_{L^2(A_R)}\|U\|_{L^4(A_R)}.
\end{align*}
Every right-hand side tends to zero as \(R\to\infty\).  Letting \(R\to\infty\) in
\eqref{eq:4d-tail-Caccioppoli} gives \(\nabla U=0\).  Since \(U\in L^4(\R^4)\),
\(U=0\).  The equation and \eqref{eq:4d-P2} then give \(P=0\).  Since
$P_0=P+c_0$, the original pressure is the constant $c_0$.  This completes the proof
of Theorem \ref{thm:cubic-45}.
\end{proof}

\appendix
\section{Pressure reconstruction from the velocity}
\label{app:pressure}

This appendix supplies the pressure normalization used by both velocity-only rigidity
theorems.  A local/far-field Riesz-transform decomposition constructs a canonical
pressure with the correctly scaled $L^{3/2}$ Morrey bound.  A Bogovskii correction
then removes the possible momentum defect supported at the puncture, and a Liouville
argument proves that the original pressure differs from the canonical one only by a
constant. 

\begin{Prop}[Automatic pressure reconstruction]
\label{prop:auto-pressure}
Let $n\in\{4,5\}$ and let
\[
 (U,P_0)\in C^\infty(\R^n\setminus\{0\};\R^n\times\R)
\]
solve the stationary Navier--Stokes equations \eqref{eq:NS} in $\R^n\setminus\{0\}$.  Assume that
\begin{equation}
 \cM_{3,n}(U):=\sup_{R>0}R^{3-n}\int_{B_R}|U|^3\,dx=M<\infty.
 \label{eq:velocity-Morrey}
\end{equation}
Then there exists a canonical pressure $P^\sharp\in L^{3/2}_{\rm loc}(\R^n)$ such that
\begin{equation}
 -\Delta P^\sharp=\partial_i\partial_j(U_iU_j)
 \quad\text{in }\cD'(\R^n),
 \label{eq:A-pressure-Poisson}
\end{equation}
and
\begin{equation}
 \sup_{R>0}R^{3-n}\int_{B_R}|P^\sharp|^{3/2}\,dx\le C_n M.
 \label{eq:pressure-Morrey}
\end{equation}
Moreover,
\begin{equation}
 -\Delta U+\diver(U\otimes U)+\nabla P^\sharp=0,
 \qquad \diver U=0
 \quad\text{in }\cD'(\R^n),
 \label{eq:global-NS-canonical}
\end{equation}
and there exists a constant $c\in\R$ such that
\begin{equation}
 P_0=P^\sharp+c
 \quad\text{in }\R^n\setminus\{0\}.
 \label{eq:pressure-constant}
\end{equation}
In particular, $P^\sharp$ is smooth on $\R^n\setminus\{0\}$.  It is the unique
pressure representative associated with $U$ that satisfies
\eqref{eq:pressure-Morrey}.
\end{Prop}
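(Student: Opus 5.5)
We first build $P^\sharp$ explicitly from $U\otimes U$ with a local/far-field splitting, and then compare it with $P_0$ through a Liouville argument.

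\textbf{Step 1: construction of $P^\sharp$.} By Hölder, $U\otimes U\in L^{3/2}_{\rm loc}(\R^n)$, so we may set
\[
 P^\sharp(x):=\mathcal R_i\mathcal R_j\bigl(\chi_1U_iU_j\bigr)(x)
 +\int_{\R^n}\bigl(K_{ij}(x-y)-K_{ij}(-y)\bigr)\bigl(1-\chi_1(y)\bigr)U_iU_j(y)\,dy .
\]
Here $\chi_1$ is a cutoff equal to one on $B_1$, and $K_{ij}=\partial_{ij}\Gamma$ is the (principal-value) kernel of $\mathcal R_i\mathcal R_j$, with $|K|\lesssim|x|^{-n}$ and $|\nabla K|\lesssim|x|^{-n-1}$. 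The Morrey bound gives $\int_{B_{2^{k+1}}\setminus B_{2^k}}|U|^2\lesssim M^{2/3}2^{k(n-1)}$. The subtracted kernel difference is $O(|x|2^{-k(n+1)})$ on that annulus, so the far-field sum converges locally uniformly. Then \eqref{eq:A-pressure-Poisson} holds, because the subtracted constant has zero Laplacian.

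\textbf{Step 2: the Morrey estimate \eqref{eq:pressure-Morrey}.} It is cleaner to redo the splitting at each scale. Fix $R$ and write $P^\sharp=p_R^{\rm loc}+p_R^{\rm far}+c_R$, where $p_R^{\rm loc}=\mathcal R_i\mathcal R_j(\chi_{2R}U_iU_j)$. Calderón--Zygmund gives $\int|p^{\rm loc}_R|^{3/2}\lesssim\int_{B_{4R}}|U|^3\lesssim MR^{n-3}$. The far part on $B_R$ is bounded by $\sum_{2^k>R}2^{-kn}\int_{A_{2^k}}|U|^2\lesssim M^{2/3}\sum 2^{-k}\sim M^{2/3}R^{-1}$. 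Its $L^{3/2}(B_R)$ contribution is therefore $\lesssim MR^{-3/2}R^n=MR^{n-3/2}$, which is off by a factor $R^{3/2}$. So we must use the gradient of the far part instead. The key point is that the constants $c_R$ telescope: $|c_{2R}-c_R|$ is controlled by the same dyadic sums, namely $\lesssim M^{2/3}R^{-2}$ after using the $|x|$-gain.

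I expect this bookkeeping to be the main obstacle. With the naive mean-zero normalization the constants are summable from $R=1$ toward both $0$ and $\infty$, but the $|y|^{-n}$ tail toward $0$ must be handled by the local term. What I would try first is to define $P^\sharp$ directly as the limit normalization $\lim_{R\to\infty}$ of $\mathcal R_i\mathcal R_j(\chi_RU_iU_j)$. The $L^{3/2}$ weak convergence of this limit, tested on each fixed ball, follows because the tail contributions $\lesssim M^{2/3}R^{-2}\to0$ pointwise on compacts. This yields $|p^{\rm far}_R|\lesssim M^{2/3}R^{-2}$ on $B_R$, hence the $L^{3/2}$ contribution is $\lesssim M R^{-3}R^n=MR^{n-3}$, which is exactly \eqref{eq:pressure-Morrey}.

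\textbf{Step 3: the global equations \eqref{eq:global-NS-canonical}.} Define $T:=-\Delta U+\diver(U\otimes U)+\nabla P^\sharp$ in $\cD'(\R^n)$. On $\R^n\setminus\{0\}$, $T=\nabla(P^\sharp-P_0)$, and taking the divergence there shows that $\Delta(P^\sharp-P_0)=0$. We first obtain $U\in H^1_{\rm loc}$ near the origin as in the proof of Theorem \ref{thm:smooth-punctured}; for $n=4$ the inner errors are only $O(1)$, which is still enough. The divergence equation extends by the cutoff estimate. Then $T$ is supported at $\{0\}$, so it is a finite sum of derivatives of $\delta_0$. Its order is restricted by the scaling of $U\in H^1_{\rm loc}$, $U\otimes U\in L^{3/2}$ and $P^\sharp\in L^{3/2}$: pairing $T$ against $\varphi(x/\varepsilon)$ and letting $\varepsilon\to0$ shows that $T=0$. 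I would give this pairing via a Bogovskii-corrected cutoff of the test field, so that the pressure term can be isolated.

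\textbf{Step 4: comparison with $P_0$ and uniqueness.} Once $T=0$, $\nabla(P^\sharp-P_0)=0$ on the connected set $\R^n\setminus\{0\}$ ($n\ge2$), which gives \eqref{eq:pressure-constant}. Smoothness of $P^\sharp$ off the origin follows from that of $P_0$. For uniqueness, suppose two representatives satisfy \eqref{eq:pressure-Morrey}. Their difference is constant, but $R^{3-n}|c|^{3/2}R^n$ is unbounded unless $c=0$.
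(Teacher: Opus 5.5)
There is a genuine gap in your Step 3. The assertion that $T$ is supported at $\{0\}$ does not follow from anything you have established.

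Away from the origin you correctly have $T=\nabla h$, where $h:=P^\sharp-P_0$ is harmonic on $\R^n\setminus\{0\}$. But $\nabla h=0$ there is precisely \eqref{eq:pressure-constant}, the conclusion you want. At that stage nothing excludes, for example, $h=a\cdot x$, for which $T\equiv a$ on $\R^n\setminus\{0\}$. Such a $T$ is not a point distribution. Moreover $\langle T,\varphi(\cdot/\varepsilon)\rangle=O(\varepsilon^n)=o(\varepsilon^{n-3})$, so your small-scale pairing cannot detect it.

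The Liouville argument announced in your first sentence is never carried out, and it has to be two-sided:
\begin{itemize}
\item \emph{Near the origin.} Test only with divergence-free fields $\phi_\varepsilon$ supported in $\R^n\setminus\{0\}$. For these, $\langle T,\phi_\varepsilon\rangle=\langle\nabla h,\phi_\varepsilon\rangle=-\langle h,\diver\phi_\varepsilon\rangle=0$, with no information on $P_0$ needed. Your instinct to use Bogovskii-corrected cutoffs is right, but their role is exactly to make $P_0$ drop out. With $\phi_\varepsilon=\chi_\varepsilon\phi-\mathcal B_{A_\varepsilon}(\nabla\chi_\varepsilon\cdot\phi)$ and the $O(\varepsilon^{n-3})$ Morrey errors, you get $\langle T,\phi\rangle=0$ for every divergence-free $\phi\in C_c^\infty(\R^n;\R^n)$.
\item \emph{De Rham.} This yields $T=\nabla Q$ with $\Delta Q=\diver T=0$ on all of $\R^n$. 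Hence the components of $T$ are tempered harmonic functions, that is, polynomials.
\item \emph{At infinity.} The bound $|\langle T,\psi(\cdot/R)\rangle|\lesssim R^{n-3}$ follows from the Morrey bounds on $U$, $U\otimes U$, $P^\sharp$ exactly as near the origin. It is incompatible with the growth $\sim R^{n+d}$ of a nonzero polynomial of degree $d$, so $T=0$.
\end{itemize}
This is the paper's Steps~3--4. The large-scale half is missing from your proposal entirely, and the small-scale half is applied to a distribution wrongly assumed to live at the origin.

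There are also smaller problems.
\begin{itemize}
\item \emph{Growth exponent.} The Morrey bound gives $\int_{B_\rho}|U|^2\lesssim M^{2/3}\rho^{n-2}$, not $\rho^{n-1}$. With the correct exponent, the far field $\int_{\R^n\setminus B_{2R}}K_{ij}(x-y)U_iU_j(y)\,dy$ converges absolutely and is $\lesssim M^{2/3}R^{-2}$ on $B_R$. The apparent $R^{3/2}$ loss in your Step~2 then disappears, and no constants need to telescope.
\item \emph{Kernel subtraction.} The subtraction in Step~1 must be dropped. It shifts the pressure by the finite constant $\int K_{ij}(-y)(1-\chi_1(y))U_iU_j(y)\,dy$, which is in general nonzero and destroys \eqref{eq:pressure-Morrey} at large $R$. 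Your final ``limit normalization'' is the right choice and coincides with the paper's.
\item \emph{$H^1_{\rm loc}$ step.} Obtaining $U\in H^1_{\rm loc}$ ``as in the proof of Theorem~\ref{thm:smooth-punctured}'' is circular. That argument runs the local energy equality with a pressure already known to satisfy a Morrey bound. Here the only pressure solving the punctured equation is $P_0$, which is uncontrolled near the origin. The paper instead uses the pressure-free Bogovskii Caccioppoli estimate of Lemma~\ref{lem:pressure-free-Cacc}. Alternatively, you can avoid $\nabla U$ altogether by moving all derivatives onto the test fields.
\end{itemize}
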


The proof is divided into four steps. The only auxiliary ingredient not proved here is the standard Bogovskii operator (see Chapter III, Theorem III.3.1 in \cite{G2011}) on uniformly Lipschitz annuli: if $A\subset\R^n$ is such an annulus and $1<q<\infty$, then for every $g\in L^q(A)$ with zero mean there exists $\mathcal B_Ag\in W^{1,q}_0(A;\R^n)$ such that
\begin{equation}
 \diver \mathcal B_Ag=g,
 \qquad
 \|\nabla\mathcal B_Ag\|_{L^q(A)}\le C_q\|g\|_{L^q(A)},
 \label{eq:Bogovskii}
\end{equation}
where the constant is unchanged under dilation of the annulus.  We use the standard
linear construction, so if $g\in L^2(A)\cap L^3(A)$, the same field
$\mathcal B_Ag$ satisfies both the $q=2$ and $q=3$ estimates.

\underline{\bf Step 1: construction of the canonical pressure}

By H\"older's inequality and \eqref{eq:velocity-Morrey}, for every $\rho>0$,
\begin{align}
 \int_{B_\rho}|U|^2\,dx
 &\le
 \left(\int_{B_\rho}|U|^3\,dx\right)^{2/3}|B_\rho|^{1/3}\notag\\
 &\le C_n M^{2/3}\rho^{n-2}.
 \label{eq:L2-growth}
\end{align}

Let $\mathcal R_i$ be the Riesz transforms on $\R^n$, with Fourier multiplier
$-i\xi_i/|\xi|$, and set
\[
 T_{ij}:=\mathcal R_i\mathcal R_j.
\]
Repeated indices are summed throughout the construction.  With this convention,
$-\Delta T_{ij}f=\partial_i\partial_jf$.
Away from the origin, the Calder\'on--Zygmund kernel $K_{ij}$ of $T_{ij}$ is a
dimensional constant multiple of
\[
 \frac{n x_ix_j-\delta_{ij}|x|^2}{|x|^{n+2}}
\]
(with the usual principal-value normalization at the origin), and satisfies
\begin{equation}
 |K_{ij}(x)|\le C_n|x|^{-n}.
 \label{eq:kernel}
\end{equation}
The principal-value normalization is used only in the local term below.  Whenever
$x$ is separated from the support of $f$, $T_{ij}f(x)$ is represented by the ordinary
absolutely convergent kernel integral.
Fix $R>0$. For $x\in B_R$, define
\begin{equation}
 \begin{aligned}
 P_R^\sharp(x)
 &:= P_{R,\mathrm{loc}}^{\sharp} + P_{R,\mathrm{far}}^{\sharp} = 
 T_{ij}\bigl(U_iU_j\1_{B_{2R}}\bigr)(x)
 +\int_{\R^n\setminus B_{2R}}K_{ij}(x-y)U_i(y)U_j(y)\,dy.
 \end{aligned}
 \label{eq:local-pressure-definition}
\end{equation}
The far-field integral is absolutely convergent. Indeed, let
\[
 A_k:=B_{2^{k+1}R}\setminus B_{2^kR},\qquad k\ge1.
\]
For $x\in B_R$ and $y\in A_k$, one has $|x-y|\ge |y|/2$. Hence \eqref{eq:kernel} and \eqref{eq:L2-growth} imply
\begin{align}
 \int_{A_k}|K_{ij}(x-y)|\,|U(y)|^2\,dy
 &\le C_n(2^kR)^{-n}\int_{B_{2^{k+1}R}}|U|^2\,dy\notag\\
 &\le C M^{2/3}(2^kR)^{-2}.
 \label{eq:far-shell}
\end{align}
Therefore
\begin{equation}
 |P_{R,\mathrm{far}}^\sharp(x)|
 \le C M^{2/3}R^{-2},
 \qquad x\in B_R.
 \label{eq:far-pointwise}
\end{equation}

The definition is consistent as $R$ varies. If $0<R<S$ and $x\in B_R$, then $x$ is separated from $B_{2S}\setminus B_{2R}$. Therefore the singular integral of the annular piece is represented by its ordinary kernel for almost every $x\in B_R$, and
\[
 T_{ij}\bigl(U_iU_j\1_{B_{2S}\setminus B_{2R}}\bigr)(x)
 =\int_{B_{2S}\setminus B_{2R}}K_{ij}(x-y)U_i(y)U_j(y)\,dy.
\]
The annular contribution added to the local part is exactly canceled by the annular contribution removed from the far part. Hence
\[
 P_S^\sharp=P_R^\sharp
 \quad\text{a.e. on }B_R.
\]
Thus the family $\{P_R^\sharp\}_{R>0}$ defines a function
\[
 P^\sharp\in L^{3/2}_{\rm loc}(\R^n).
\]

For the local part, the $L^{3/2}$ boundedness of the Riesz transforms gives
\begin{align}
 \int_{B_R}|P_{R,\mathrm{loc}}^\sharp|^{3/2}\,dx
 &\le C\int_{B_{2R}}|U|^3\,dx
 \le C_nMR^{n-3}.
 \label{eq:local-pressure-bound}
\end{align}
For the far part, \eqref{eq:far-pointwise} yields
\begin{align}
 \int_{B_R}|P_{R,\mathrm{far}}^\sharp|^{3/2}\,dx
 &\le C\bigl(M^{2/3}R^{-2}\bigr)^{3/2}|B_R|
 \le C_nMR^{n-3}.
 \label{eq:far-pressure-bound}
\end{align}
Combining \eqref{eq:local-pressure-bound} and \eqref{eq:far-pressure-bound} proves \eqref{eq:pressure-Morrey}.

Finally, \eqref{eq:A-pressure-Poisson} follows locally.  On every ball compactly
contained in $B_R$, the shell series defining the far field and each of its spatial
derivatives converge uniformly: every derivative gives one additional power of
$|x-y|^{-1}$ in the estimate \eqref{eq:far-shell}.  Thus the far-field part in
\eqref{eq:local-pressure-definition} is harmonic in $B_R$.  If
$\varphi\in C_c^\infty(B_R)$, then
\[
 -\Delta T_{ij}f=\partial_i\partial_j f
\]
in distributions. Since $U_iU_j\1_{B_{2R}}=U_iU_j$ on $B_R$, we obtain \eqref{eq:A-pressure-Poisson}.

\underline{\bf Step 2: a pressure-free annular Caccioppoli estimate}

We first obtain a local Dirichlet bound without using any pressure estimate.

\begin{Lem}[Pressure-free annular Caccioppoli estimate]
\label{lem:pressure-free-Cacc}
For every $R>0$,
\begin{equation}
 \int_{B_{2R}\setminus B_R}|\nabla U|^2\,dx
 \le
 C R^{-2}\int_{B_{4R}\setminus B_{R/2}}|U|^2\,dx
 +C R^{-1}\int_{B_{4R}\setminus B_{R/2}}|U|^3\,dx.
 \label{eq:pressure-free-Cacc}
\end{equation}
Consequently,
\begin{equation}
 \int_{B_{2R}\setminus B_R}|\nabla U|^2\,dx
 \le C_n(M^{2/3}+M)R^{n-4}.
 \label{eq:annular-energy}
\end{equation}
When $n=5$, one also has
\begin{equation}
 \int_{B_R\setminus\{0\}}|\nabla U|^2\,dx
 \le C(M^{2/3}+M)R.
 \label{eq:ball-energy}
\end{equation}
In particular, when $n=5$, after assigning any value to $U(0)$, one has
\begin{equation}
 U\in H^1_{\rm loc}(\R^5).
 \label{eq:H1-extension}
\end{equation}
For both $n=4$ and $n=5$, the punctured gradient belongs to
$L^1_{\rm loc}(\R^n)$.
\end{Lem}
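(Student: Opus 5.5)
The estimate will come from the smooth local energy identity on $\R^n\setminus\{0\}$, tested with an annular cutoff. The pressure will be removed by a Bogovskii correction instead of being estimated. Fix $R>0$ and a radial cutoff $\eta\in C_c^\infty(B_{4R}\setminus B_{R/2})$ with $\eta=1$ on $B_{2R}\setminus B_R$, $|\nabla\eta|\le C/R$ and $|\nabla^2\eta|\le C/R^2$. Let $A:=B_{4R}\setminus B_{R/2}$. Testing the momentum equation with $\eta^2U$ would give the usual terms plus the pressure term $\int P_0\,U\cdot\nabla(\eta^2)$. Since $U\cdot\nabla(\eta^2)=\diver(\eta^2U)$, this term has zero mean on $A$, and it is the only term not controlled by $U$. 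To avoid estimating $P_0$, I will instead test with the solenoidal field
\[
\Psi:=\eta^2U-\mathcal B_A\bigl(U\cdot\nabla(\eta^2)\bigr),
\]
which belongs to $W^{1,2}_0(A)$ and is divergence free. Hence $\int P_0\diver\Psi=0$, and the pressure drops out exactly, whichever representative is used.

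The weak equation tested with $\Psi$ (admissible because $U$ is smooth on $\overline A$) gives
\[
\int\nabla U:\nabla(\eta^2U)-\int U\otimes U:\nabla(\eta^2U)=\int\nabla U:\nabla\mathcal B-\int U\otimes U:\nabla\mathcal B,
\]
where $\mathcal B:=\mathcal B_A(U\cdot\nabla(\eta^2))$. The left side equals $\int\eta^2|\nabla U|^2$ plus the standard terms $-\int\frac{|U|^2}2\Delta\eta^2$ and $-\int\frac{|U|^2}{2}U\cdot\nabla\eta^2$. These are bounded by $CR^{-2}\int_A|U|^2+CR^{-1}\int_A|U|^3$. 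For the right side I will use \eqref{eq:Bogovskii} with $q=2$ and $q=3$ on the dilation-invariant annulus $A$. First, $\|\nabla\mathcal B\|_{L^2}\le CR^{-1}\|U\|_{L^2(A)}$, so Young's inequality bounds $\int\nabla U:\nabla\mathcal B$ by $\delta\|\nabla U\|_{L^2(A)}^2+C_\delta R^{-2}\int_A|U|^2$. Second, $\|\nabla\mathcal B\|_{L^3}\le CR^{-1}\|U\|_{L^3(A)}$, so $\int|U|^2|\nabla\mathcal B|\le CR^{-1}\int_A|U|^3$.

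\textbf{Main obstacle.} The term $\delta\|\nabla U\|^2_{L^2(A)}$ lives on the larger annulus $A$, where $\eta\ne1$, so it cannot be absorbed directly. I would handle this by the hole-filling device already used in Lemma \ref{lem:annular-W}. Run the estimate on nested annuli $\mathcal A(s)\subset\mathcal A(t)$ interpolating between $B_{2R}\setminus B_R$ and $A$, with cutoff constants of order $((t-s)R)^{-k}$, and Bogovskii constants uniform because the annuli are uniformly Lipschitz after rescaling. This yields
\[
\Phi(s)\le\delta\Phi(t)+\frac{C}{(t-s)^{k}}\Big(R^{-2}\!\int_A|U|^2+R^{-1}\!\int_A|U|^3\Big),
\]
and the iteration lemma then gives \eqref{eq:pressure-free-Cacc}. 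Here $\Phi(t)<\infty$ by smoothness on compact annuli.

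The remaining claims follow quickly. Inserting \eqref{eq:velocity-Morrey} and \eqref{eq:L2-growth} gives $R^{-2}\cdot M^{2/3}R^{n-2}+R^{-1}MR^{n-3}$, which is \eqref{eq:annular-energy}. For $n=5$, summing \eqref{eq:annular-energy} over the dyadic annuli $B_{2^{-k+1}R}\setminus B_{2^{-k}R}$ gives a geometric series $\sum_k 2^{-k}R$, hence \eqref{eq:ball-energy}. To get \eqref{eq:H1-extension}, I will identify the punctured gradient with the distributional gradient across the origin, using an inner cutoff $\chi_\varepsilon$ as in the proof of Theorem \ref{thm:smooth-punctured}. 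The error $\int U\zeta\nabla\chi_\varepsilon$ is bounded by $\|U\|_{L^2(B_{2\varepsilon})}\|\nabla\chi_\varepsilon\|_{L^2}\le C\varepsilon^{3/2}\varepsilon^{3/2}\to0$. For $n=4$, each dyadic annulus carries bounded energy, so Cauchy--Schwarz gives $\int_{\text{annulus }k}|\nabla U|\le C(2^{-k}R)^{2}$. The sum over $k$ is finite, so the punctured gradient lies in $L^1_{\rm loc}(\R^4)$.
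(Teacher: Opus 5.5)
Your proposal is correct and follows essentially the same route as the paper. The paper likewise corrects $\eta^2U$ by a Bogovskii field on the cutoff's own annulus to eliminate the pressure, uses the $q=2$ and $q=3$ Bogovskii bounds with Young's inequality, runs the hole-filling iteration over nested uniformly Lipschitz annuli, and then applies the Morrey bounds, dyadic summation, and an inner-cutoff identification of the punctured gradient; the only cosmetic difference is that you integrate the cross term $\int\nabla U:(U\otimes\nabla\eta^2)$ by parts onto $\Delta(\eta^2)$, whereas the paper keeps it and absorbs it by Young's inequality.
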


\begin{proof}
For $0\le s<t\le1$, set
\[
 D_s:=B_{(2+2s)R}\setminus\overline{B_{(1-s/2)R}}.
\]
Then
\[
 D_0=B_{2R}\setminus\overline{B_R},
 \qquad
 D_1=B_{4R}\setminus\overline{B_{R/2}}.
\]
Choose $\eta\in C_c^\infty(D_t)$ such that $0\le\eta\le1$, $\eta=1$ on $D_s$, and
\[
 |\nabla\eta|\le \frac{C}{R(t-s)}.
\]
Set
\[
 g:=\diver(\eta^2U)=2\eta U\cdot\nabla\eta.
\]
Since $\eta^2U$ is compactly supported in $D_t$,
\[
 \int_{D_t}g\,dx=0.
\]
After rescaling by $R$, the annuli $D_t$ range in a uniformly Lipschitz family: their
inner radii lie in $[1/2,1]$ and their outer radii in $[2,4]$.  Hence the constants in
the Bogovskii estimates below are uniform in $R,s,t$.
Let $w=\mathcal B_{D_t}g$. By \eqref{eq:Bogovskii}, for $q=2,3$,
\begin{equation}
 \|\nabla w\|_{L^q(D_t)}
 \le \frac{C}{R(t-s)}\|U\|_{L^q(D_t)}.
 \label{eq:Bogo-annulus}
\end{equation}
The field
\[
 \Phi:=\eta^2U-w
\]
is divergence free and belongs to
$W^{1,2}_{0,\sigma}(D_t)\cap W^{1,3}_{0,\sigma}(D_t)$.  Approximate it by smooth
compactly supported solenoidal fields in $W^{1,3}_0(D_t)$; since $D_t$ has finite
measure, this convergence also holds in $W^{1,2}_0(D_t)$.  The viscous and convection
pairings pass to the limit because
$\nabla U\in L^2(D_t)$ and $U\otimes U\in L^{3/2}(D_t)$.  Thus $\Phi$ is an
admissible pressure-free test field in the smooth punctured equation.  We obtain
\begin{align}
 \int_{D_t}\eta^2|\nabla U|^2\,dx
 ={}&-2\int_{D_t}\eta\nabla U:(U\otimes\nabla\eta)\,dx
 +\int_{D_t}\nabla U:\nabla w\,dx\notag\\
 &+\int_{D_t}U\otimes U:\nabla(\eta^2U)\,dx
 -\int_{D_t}U\otimes U:\nabla w\,dx.
 \label{eq:Cacc-identity}
\end{align}
Since $\diver U=0$,
\begin{equation}
 \int_{D_t}U\otimes U:\nabla(\eta^2U)\,dx
 =\int_{D_t}\eta|U|^2U\cdot\nabla\eta\,dx.
 \label{eq:conv-identity}
\end{equation}
Using Young's inequality, \eqref{eq:Bogo-annulus}, and \eqref{eq:conv-identity}, one obtains
\begin{align*}
 2\left|\int\eta\nabla U:(U\otimes\nabla\eta)\right|
 &\le \frac14\int\eta^2|\nabla U|^2
 +\frac{C}{R^2(t-s)^2}\int_{D_t}|U|^2,\\
 \left|\int\nabla U:\nabla w\right|
 &\le \theta\int_{D_t}|\nabla U|^2
 +\frac{C_\theta}{R^2(t-s)^2}\int_{D_t}|U|^2,\\
 \left|\int U\otimes U:\nabla(\eta^2U)\right|
 &\le \frac{C}{R(t-s)}\int_{D_t}|U|^3,\\
 \left|\int U\otimes U:\nabla w\right|
 &\le \|U\otimes U\|_{L^{3/2}(D_t)}
       \|\nabla w\|_{L^3(D_t)}
 \le \frac{C}{R(t-s)}\int_{D_t}|U|^3.
\end{align*}
Write
\[
 E(s):=\int_{D_s}|\nabla U|^2\,dx
\]
and
\[
 A_R:=R^{-2}\int_{D_1}|U|^2\,dx
 +R^{-1}\int_{D_1}|U|^3\,dx.
\]
The first estimate is absorbed into the left-hand side.  After this absorption, choose
$\theta>0$ sufficiently small and use $0<t-s\le1$.  Since
$\eta=1$ on $D_s$ and $\supp\eta\subset D_t$, the preceding estimates imply
\begin{equation}
 E(s)\le \frac18 E(t)+\frac{C}{(t-s)^2}A_R.
 \label{eq:hole-filling}
\end{equation}
Set $s_j=1-2^{-j}$. Iterating \eqref{eq:hole-filling} gives
\[
 E(0)
 \le 8^{-N}E(s_N)
 +CA_R\sum_{j=0}^{N-1}8^{-j}2^{2(j+1)}.
\]
Since $E(s_N)\le E(1)<\infty$ and
\[
 \sum_{j=0}^{\infty}8^{-j}2^{2(j+1)}<\infty,
\]
letting $N\to\infty$ proves \eqref{eq:pressure-free-Cacc}.

Now \eqref{eq:L2-growth} and \eqref{eq:velocity-Morrey} imply
\[
 R^{-2}\int_{B_{4R}}|U|^2\,dx\le C_nM^{2/3}R^{n-4},
 \qquad
 R^{-1}\int_{B_{4R}}|U|^3\,dx\le CMR^{n-4},
\]
which gives \eqref{eq:annular-energy}.  If $n=5$, summing
\eqref{eq:annular-energy} over the dyadic annuli
\[
 B_{2^{-j}R}\setminus B_{2^{-j-1}R},\qquad j\ge0,
\]
yields \eqref{eq:ball-energy}.

For either $n=4$ or $n=5$, H\"older's inequality on each dyadic annulus gives
\[
 \int_{B_{2^{-j}R}\setminus B_{2^{-j-1}R}}|\nabla U|\,dx
 \le C R^{n-2}2^{-j(n-2)}.
\]
Indeed, the square root of \eqref{eq:annular-energy} contributes
$C(2^{-j}R)^{(n-4)/2}$ and the square root of the annular volume contributes
$C(2^{-j}R)^{n/2}$.  The geometric series is summable, proving
$\nabla U\in L^1_{\rm loc}(\R^n\setminus\{0\})$ with an integrable extension across
the origin.

It remains to identify the punctured derivative with the distributional derivative
across the origin.  Let $\chi_\varepsilon$ be radial, equal to $0$ on
$B_\varepsilon$ and $1$ outside $B_{2\varepsilon}$, with
$|\nabla\chi_\varepsilon|\le C\varepsilon^{-1}$. For
$\varphi\in C_c^\infty(B_R)$, the function $\varphi\chi_\varepsilon$ is supported
away from the origin, so for each component of $U$,
\[
 \int U\,\partial_i(\varphi\chi_\varepsilon)\,dx
 =-\int (\partial_iU)\varphi\chi_\varepsilon\,dx.
\]
The only additional term is
\[
 \int U\varphi\,\partial_i\chi_\varepsilon\,dx.
\]
By H\"older's inequality and \eqref{eq:velocity-Morrey},
\[
 \left|\int U\varphi\,\partial_i\chi_\varepsilon\,dx\right|
 \le C_\varphi\varepsilon^{-1}\int_{A_\varepsilon}|U|\,dx
 \le C_{\varphi,M}\varepsilon^{n-2}\longrightarrow0.
\]
Letting $\varepsilon\downarrow0$ proves that the punctured weak gradient is the
whole-space distributional gradient.  Thus $U\in W^{1,1}_{\rm loc}(\R^n)$ in both
dimensions.  When $n=5$, \eqref{eq:L2-growth} and \eqref{eq:ball-energy} improve this
to \eqref{eq:H1-extension}.
\end{proof}

\underline{\bf Step 3: removal of the momentum defect at the origin}

We first extend the divergence equation across the origin.  If
$\zeta\in C_c^\infty(\R^n)$, then testing $\diver U=0$ on the punctured space with
$\chi_\varepsilon\zeta$ produces only the error
\[
 \int_{A_\varepsilon}\zeta U\cdot\nabla\chi_\varepsilon\,dx.
\]
By the velocity Morrey bound,
\begin{align*}
 \varepsilon^{-1}\int_{A_\varepsilon}|U|\,dx
 &\le C\varepsilon^{-1}
 \left(\int_{B_{2\varepsilon}}|U|^3\,dx\right)^{1/3}
 |B_{2\varepsilon}|^{2/3}\\
 &\le C M^{1/3}\varepsilon^{n-2}\longrightarrow0.
\end{align*}
Consequently $\diver U=0$ in $\cD'(\R^n)$.

We next prove the pressure-free projected equation on the whole space. Let
\[
 \phi\in C_c^\infty(\R^n;\R^n),
 \qquad \diver\phi=0.
\]
Choose the inner cutoff $\chi_\varepsilon$ as above and let
\[
 A_\varepsilon:=B_{2\varepsilon}\setminus\overline{B_\varepsilon},
 \qquad
 g_\varepsilon:=\diver(\chi_\varepsilon\phi)
 =\nabla\chi_\varepsilon\cdot\phi.
\]
Since $\chi_\varepsilon=0$ on the inner boundary and $\chi_\varepsilon=1$ on the outer boundary,
\[
 \int_{A_\varepsilon}g_\varepsilon\,dx
 =\int_{\partial B_{2\varepsilon}}\phi\cdot n\,dS
 =\int_{B_{2\varepsilon}}\diver\phi\,dx=0.
\]
Let
\[
 w_\varepsilon:=\mathcal B_{A_\varepsilon}g_\varepsilon
\]
and extend $w_\varepsilon$ by zero outside $A_\varepsilon$. By scale invariance of the Bogovskii estimate,
\begin{equation}
 \|\nabla w_\varepsilon\|_{L^q(A_\varepsilon)}
 \le C_\phi\varepsilon^{n/q-1},
 \qquad q=2,3.
 \label{eq:w-eps}
\end{equation}
Define
\[
 \phi_\varepsilon:=\chi_\varepsilon\phi-w_\varepsilon.
\]
Then $\diver\phi_\varepsilon=0$, $\phi_\varepsilon$ vanishes near the origin, and $\phi_\varepsilon=\phi$ outside $B_{2\varepsilon}$. Moreover,
\begin{equation}
 \|\nabla(\phi-\phi_\varepsilon)\|_{L^2(B_{2\varepsilon})}
 \le C_\phi\varepsilon^{n/2-1},
 \label{eq:test-L2}
\end{equation}
and
\begin{equation}
 \|\nabla(\phi-\phi_\varepsilon)\|_{L^3(B_{2\varepsilon})}
 \le C_\phi\varepsilon^{n/3-1}.
 \label{eq:test-L3}
\end{equation}
Indeed, $\phi-\phi_\varepsilon=(1-\chi_\varepsilon)\phi+w_\varepsilon$ is supported
in $B_{2\varepsilon}$, and the two displayed bounds follow by combining
\eqref{eq:w-eps} with
$\|\nabla\chi_\varepsilon\|_{L^q(A_\varepsilon)}
\le C\varepsilon^{n/q-1}$.
The field $\phi_\varepsilon$ is an admissible solenoidal weak test field in the punctured equation. Hence
\[
 \int \nabla U:\nabla\phi_\varepsilon\,dx
 -\int U\otimes U:\nabla\phi_\varepsilon\,dx=0.
\]
Here admissibility follows by approximating $\phi_\varepsilon$ on the annulus by
smooth compactly supported solenoidal fields simultaneously in $W^{1,2}$ and
$W^{1,3}$, exactly as in Step~2.
The annular energy estimate \eqref{eq:annular-energy} gives
\[
 \|\nabla U\|_{L^2(A_\varepsilon)}
 \le C_M\varepsilon^{(n-4)/2}.
\]
On $A_\varepsilon$, this estimate and \eqref{eq:test-L2} give an error bounded by
$C_{\phi,M}\varepsilon^{n-3}$.  On $B_\varepsilon$, no derivative falls on the
cutoff; the dyadic $L^1$ estimate from Step~2 gives
$\int_{B_\varepsilon}|\nabla U|\le C_M\varepsilon^{n-2}$.  Therefore
\begin{align}
 \left|\int_{B_{2\varepsilon}}\nabla U:\nabla(\phi-\phi_\varepsilon)\,dx\right|
 &\le C_{\phi,M}\varepsilon^{n-3}.
 \label{eq:viscous-error}
\end{align}
On the other hand, \eqref{eq:velocity-Morrey} gives
\[
 \|U\otimes U\|_{L^{3/2}(B_{2\varepsilon})}
 \le \left(\int_{B_{2\varepsilon}}|U|^3\,dx\right)^{2/3}
 \le C_nM^{2/3}\varepsilon^{2(n-3)/3}.
\]
Together with \eqref{eq:test-L3}, this yields
\begin{equation}
 \left|\int_{B_{2\varepsilon}}U\otimes U:\nabla(\phi-\phi_\varepsilon)\,dx\right|
 \le C_{\phi,M}\varepsilon^{n-3}.
 \label{eq:convection-error}
\end{equation}
All differences between the equation tested by $\phi$ and by
$\phi_\varepsilon$ are supported in $B_{2\varepsilon}$ and are exactly the two
errors estimated in \eqref{eq:viscous-error}--\eqref{eq:convection-error}.  Letting
$\varepsilon\downarrow0$ therefore gives
\begin{equation}
 \int_{\R^n}\nabla U:\nabla\phi\,dx
 -\int_{\R^n}U\otimes U:\nabla\phi\,dx=0
 \label{eq:projected-equation}
\end{equation}
for every $\phi\in C_c^\infty(\R^n;\R^n)$ with $\diver\phi=0$.  The first
integral is legitimate also in dimension four because Step~2 proved
$\nabla U\in L^1_{\rm loc}$.

\underline{\bf Step 4: identification of the pressure and uniqueness of the normalization}

Define the vector-valued distribution
\begin{equation}
 \mathcal F:=-\Delta U+\diver(U\otimes U)+\nabla P^\sharp
 \quad\text{in }\cD'(\R^n).
 \label{eq:defect-F}
\end{equation}
By \eqref{eq:projected-equation},
\[
 \langle\mathcal F,\phi\rangle=0
\]
for every divergence-free $\phi\in C_c^\infty(\R^n;\R^n)$. The distributional de Rham theorem \cite[Lemma~III.1.1]{G2011} therefore gives a scalar distribution $Q$ such that
\[
 \mathcal F=\nabla Q.
\]
Furthermore, \eqref{eq:A-pressure-Poisson} and $\diver U=0$ imply
\[
 \diver\mathcal F
 =\partial_i\partial_j(U_iU_j)+\Delta P^\sharp=0.
\]
Hence
\[
 \Delta Q=0
\]
in distributions.  Weyl's lemma makes $Q$ a smooth harmonic function; in particular,
every component of $\mathcal F=\nabla Q$ is harmonic.

The Morrey bounds imply polynomial growth. For $R\ge1$,
\begin{align*}
 \int_{B_R}|U|\,dx&\le C_nM^{1/3}R^{n-1},\\
 \int_{B_R}|U|^2\,dx&\le C_nM^{2/3}R^{n-2},\\
 \int_{B_R}|P^\sharp|\,dx&\le C_nM^{2/3}R^{n-2}.
\end{align*}
Thus $U$, $U\otimes U$, and $P^\sharp$ define tempered distributions, and so does
$\mathcal F$.  Since each component of $\mathcal F$ is both harmonic and tempered, it
is a harmonic polynomial.

Fix $\psi\in C_c^\infty(\R^n;\R^n)$ and set
\[
 \psi_R(x):=\psi(x/R),\qquad R\ge1.
\]
If $\supp\psi\subset B_L$, then
\begin{align}
 |\langle-\Delta U,\psi_R\rangle|
 &\le \|U\|_{L^3(B_{LR})}\|\Delta\psi_R\|_{L^{3/2}}
 \le C_{\psi,L,M}R^{n-3},
 \label{eq:growth1}\\
 |\langle\diver(U\otimes U),\psi_R\rangle|
 &\le \|U\otimes U\|_{L^{3/2}(B_{LR})}\|\nabla\psi_R\|_{L^3}
 \le C_{\psi,L,M}R^{n-3},
 \label{eq:growth2}\\
 |\langle\nabla P^\sharp,\psi_R\rangle|
 &\le \|P^\sharp\|_{L^{3/2}(B_{LR})}\|\diver\psi_R\|_{L^3}
 \le C_{\psi,L,M}R^{n-3}.
 \label{eq:growtg3}
\end{align}
Therefore
\begin{equation}
 |\langle\mathcal F,\psi_R\rangle|\le C_{\psi,M}R^{n-3}.
 \label{eq:defect-growth}
\end{equation}
If $\mathcal F$ were a nonzero polynomial of degree $d\ge0$, write
$\mathcal F_d$ for its top homogeneous part and choose $\psi$ so that
$\int\mathcal F_d\cdot\psi\ne0$.  The change of variables $x=Ry$ then gives
\[
 \langle\mathcal F,\psi_R\rangle
 =c_\psi R^{n+d}+O(R^{n+d-1}),
 \qquad c_\psi\ne0,
\]
contradicting \eqref{eq:defect-growth}. Hence $\mathcal F=0$, which proves \eqref{eq:global-NS-canonical}.

On $\R^n\setminus\{0\}$, both $P_0$ and $P^\sharp$ are pressures associated with the same velocity $U$. Subtracting the two momentum equations gives
\[
 \nabla(P_0-P^\sharp)=0.
\]
Since $\R^n\setminus\{0\}$ is connected, \eqref{eq:pressure-constant} follows.
Moreover, \eqref{eq:global-NS-canonical} and the smoothness of $U$ away from the
origin show that $\nabla P^\sharp$ is smooth there; hence $P^\sharp$ is smooth on
$\R^n\setminus\{0\}$.

Finally, suppose that both $P^\sharp$ and $P^\sharp+c$ satisfy a bound of the form \eqref{eq:pressure-Morrey}. By Minkowski's inequality,
\[
 |c|\,|B_R|^{2/3}
 =\|c\|_{L^{3/2}(B_R)}
 \le
 \|P^\sharp+c\|_{L^{3/2}(B_R)}
 +\|P^\sharp\|_{L^{3/2}(B_R)}
 \le C R^{2(n-3)/3}.
\]
Since $|B_R|^{2/3}\simeq R^{2n/3}$, we obtain
\[
 |c|\le CR^{-2}.
\]
Letting $R\to\infty$ gives $c=0$. This proves the uniqueness of the Morrey normalization and completes the proof of Proposition~\ref{prop:auto-pressure}.

\section*{Declarations}
	\begin{itemize}
		\item \textbf{Acknowledgments} 
		W. Wang was supported by National Key R\&D Program of China (No.2023YFA1009200) and NSFC under grant 12471219.
		\item \textbf{Conflict of interest} The authors declare that they have no conflict of interest.
		\item \textbf{Data Availability} Data sharing is not applicable to this article as no datasets were generated or analyzed during the current study.
	\end{itemize}

\end{document}